\theoremstyle{plain}
\newtheorem{theorem}{Theorem}[section]
\newtheorem{prop}[theorem]{Proposition}
\newtheorem{claim}[theorem]{Claim}
\newtheorem{cor}[theorem]{Corollary} 
\newtheorem{conj}[theorem]{Conjecture}
\newtheorem{lemma}[theorem]{Lemma}
\theoremstyle{definition}
\newtheorem{rmk}[theorem]{Remark}
\newtheorem{ex}[theorem]{Example}
\newtheorem*{ex*}{Example}
\newcommand\sO{{\mathcal O}}
\newcommand\sH{{\mathcal H}}
\newcommand\sF{{\mathcal F}}
\newcommand\sG{{\mathcal G}}
\newcommand\sE{{\mathcal E}}
\newcommand{\codim}{{\rm codim}\,}
\newcommand{\ddim}{{\rm dim}\,}
\newcommand\rp{{\mathbf{P}}}
\newcommand\rz{{\mathbf{Z}}}
\newcommand\rd{{\mathbf{D}}}
\subjclass[2010]{14F05}
\title[Derived invariants and Hochschild homology]{Derived invariants of irregular varieties and Hochschild homology}
\author{Luigi Lombardi}
\address{Department of Mathematics, Statistics, and Computer Science\\
University of Illinois at Chicago, 851 S. Morgan Street, Chicago, IL, 60607}
\email{\url{lombardi@math.uic.edu}}
\begin{document}
\begin{abstract}
 We study the behavior of cohomological support loci of the canonical bundle
under derived equivalence of smooth projective varieties. 
This is achieved by investigating the derived invariance of a generalized version of Hochschild homology.
Furthermore, using techniques coming from birational geometry, we establish 
the derived invariance of the Albanese dimension for varieties having non-negative Kodaira dimension. We apply our 
machinery to study the derived invariance of the holomorphic Euler characteristic and of certain Hodge numbers for special classes
of varieties. Further applications concern the behavior of particular types of fibrations under derived equivalence.
\end{abstract}
\maketitle 

\section{Introduction}

It is now well known that derived equivalent varieties share quite a few invariants.
For instance: the dimension, the Kodaira dimension, the numerical dimension and the canonical ring are 
examples of derived invariants. 
In the paper \cite{PS}, by describing the 
behavior under derived equivalence of the Picard variety, 
Popa and Schnell establish 
the derived invariance of the number of linearly independent holomorphic one-forms.
In this paper we study the behavior under derived equivalence of other fundamental objects in the geometry of
\emph{irregular} varieties, i.e. those with positive \emph{irregularity} $q(X):=h^0(X,\Omega_X^1)$,
such as the cohomological support loci and the Albanese dimension. 
Applications of our techniques concern
the derived invariance of the holomorphic Euler characteristic of varieties with large Albanese dimension and the derived invariance
of some of the Hodge numbers of fourfolds again with large Albanese dimension. 
A further application concerns the behavior of 
fibrations of derived equivalent threefolds onto irregular varieties.
This work is motivated by 
a well-known conjecture predicting the derived invariance of all Hodge numbers 
and by a conjecture of Popa (see Conjectures \ref{intrP} and \ref{intrPV} and \cite{Po}).

The main tool we use to approach the problems described above is the comparison
of the cohomology groups of twists by topologically trivial line bundles of the canonical bundles of the varieties in play. 
This is achieved by studying a generalized version of Hochschild homology which takes into account an important isomorphism due to
Rouquier related to derived autoequivalences (see \cite{Rou} Th\'{e}or\`{e}me 4.18).
In this way we obtain a theoretical result of independent interest in the study of 
derived equivalences of smooth projective varieties, which we now present. 
To begin with, we recall the \emph{Hochschild cohomology and homology} of a smooth projective variety $X$:
\begin{equation*}\label{eqHH}
HH^*(X):=\bigoplus_k {\rm Ext}_{X\times X}^k\big(i_*\sO_X,i_*\sO_X\big),\quad 
HH_*(X):=\bigoplus_k {\rm Ext}_{X\times X}^k\big(i_*\sO_X,i_*\omega_X\big)
\end{equation*}
where $i:X\hookrightarrow X\times X$ is the 
diagonal embedding of $X$. The space $HH^*(X)$ has a structure of ring under composition of morphisms and $HH_*(X)$ is a graded
$HH^*(X)$-module with the same operation. 
Results of C\u{a}ld\u{a}raru and Orlov show that the Hochschild cohomology and homology are derived invariants 
(see \cite{Cal} Theorem 8.1 and \cite{Or} Theorem 2.1.8).
 More precisely, if $\Phi: \rd(X)\longrightarrow \rd(Y)$ is an equivalence of derived categories of smooth projective
 varieties, then it induces an isomorphism of rings $HH^*(X)\cong HH^*(Y)$ and an isomorphism of graded modules 
 $HH_*(X)\cong HH_*(Y)$ compatible with the isomorphism $HH^*(X)\cong HH^*(Y)$.
We now present the generalization of Hochschild homology mentioned above.
For a triple $(\varphi,L,m)\in {\rm Aut}^0(X)\times {\rm Pic}^0(X)\times \rz$, we define the graded $HH^*(X)$-module
\begin{equation*}\label{intrtwist}
HH_*(X,\varphi,L,m):=\bigoplus_k {\rm Ext}_{X\times X}^k\big(i_*\sO_X,(1,\varphi)_*(\omega_X^{\otimes m}\otimes L)\big)
\end{equation*}
with module structure given by composition of morphisms. We think of these spaces as a ``twisted'' version
of the Hochschild homology of $X$. 
Lastly, we recall that a derived equivalence $\rd(X)\cong \rd(Y)$ induces an isomorphism of algebraic groups,
called \emph{Rouquier's isomorphism} 
\begin{equation}\label{intrrouq}
F:{\rm Aut}^0(X)\times {\rm Pic}^0(X)\longrightarrow {\rm Aut}^0(Y)\times {\rm Pic}^0(Y).
\end{equation}
(An explicit description of $F$ is given in \eqref{rouqkernel} (see \cite{Rou} Th\'{e}or\`{e}me 4.18,
\cite{Hu} Proposition 9.45 and \cite{Ros} Theorem 3.1; \emph{cf}. \cite{PS} footnote at p. 531).)
The following theorem describes the behavior of the twisted Hochschild homology under derived equivalence.
Its proof follows the general strategy of the proofs of Orlov and C\u{a}ld\u{a}raru, 
but further technicalities appear due to the possible presence of non-trivial automorphisms of $X$ and $Y$; 
see \S 2 for its proof.
\begin{theorem}\label{intrHH}
 Let $\Phi:\rd(X)\longrightarrow \rd(Y)$ be an equivalence of derived categories of smooth projective varieties 
defined over an algebraically closed field and let $m\in \rz$.
If $F(\varphi,L)=(\psi,M)$ (where $F$ is the Rouquier isomorphism), then $\Phi$ induces an isomorphism of graded modules
\begin{gather*}
HH_*(X,\varphi,L,m)\cong HH_*(Y,\psi,M,m)
\end{gather*}
compatible with the isomorphism $HH^*(X)\cong HH^*(Y)$.
\end{theorem}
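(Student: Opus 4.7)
The plan is to adapt the strategy of Orlov and C\u{a}ld\u{a}raru for the derived invariance of ordinary Hochschild homology to the twisted setting. I would represent $\Phi$ by a Fourier--Mukai kernel $\sE \in \rd(X \times Y)$, with a quasi-inverse given by some kernel $\sE' \in \rd(Y \times X)$. Convolution with $\sE$ and $\sE'$ on the two factors then yields an equivalence
\[
\Theta: \rd(X \times X) \longrightarrow \rd(Y \times Y), \qquad \sK \longmapsto \sE' * \sK * \sE,
\]
which preserves ${\rm Ext}$-groups and is compatible with composition of morphisms. The task reduces to identifying the $\Theta$-images of the two objects appearing in the definition of $HH_*(X,\varphi,L,m)$.

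By the original Orlov--C\u{a}ld\u{a}raru identification, $\Theta(i_*\sO_X) \cong i_*\sO_Y$, since $i_*\sO_X$ is the Fourier--Mukai kernel of ${\rm id}_X$ and $\Phi \circ {\rm id}_X \circ \Phi^{-1} \cong {\rm id}_Y$. For the second object, the projection formula identifies $(1,\varphi)_*(\omega_X^{\otimes m} \otimes L)$ as the Fourier--Mukai kernel of the autoequivalence $T_{(\varphi,L,m)} := \varphi_*\bigl((-) \otimes \omega_X^{\otimes m} \otimes L\bigr)$ of $\rd(X)$. By general principles, $\Theta\bigl((1,\varphi)_*(\omega_X^{\otimes m} \otimes L)\bigr)$ is then the Fourier--Mukai kernel of the conjugated autoequivalence $\Phi \circ T_{(\varphi,L,m)} \circ \Phi^{-1}$ of $\rd(Y)$.

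To compute this conjugate I would factor $T_{(\varphi,L,m)} = \bigl(\omega_X^{\otimes m} \otimes (-)\bigr) \circ T_{(\varphi,L,0)}$. The first factor is the $m$-th power of the Serre functor up to the shift $[m \dim X]$; since Serre functors are intrinsic to the triangulated category and $\dim X = \dim Y$, conjugation by $\Phi$ converts it into $\omega_Y^{\otimes m} \otimes (-)$ with exactly the same shift. For the second factor, Rouquier's theorem (\cite{Rou} Th\'eor\`eme 4.18) states precisely that the autoequivalences arising from pairs $(\varphi,L) \in {\rm Aut}^0(X) \times {\rm Pic}^0(X)$ are preserved by conjugation by $\Phi$, with $(\varphi,L)$ mapping to $F(\varphi,L)=(\psi,M)$. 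Combining the two factors yields $\Phi \circ T_{(\varphi,L,m)} \circ \Phi^{-1} \cong T_{(\psi,M,m)}$, whose Fourier--Mukai kernel is $(1,\psi)_*(\omega_Y^{\otimes m} \otimes M)$. Substituting into the ${\rm Ext}$ computation produces the desired isomorphism, and compatibility with the $HH^*$-action follows because both module structures are given by composition of morphisms, which is transported by $\Theta$.

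The main obstacle I anticipate lies in the bookkeeping of shifts and twists: the kernel $\sE'$ of the quasi-inverse involves Grothendieck--Serre duality and the relative dualizing sheaves of the two projections from $X \times Y$, so verifying that all contributions of $\omega_X$, $\omega_Y$, and their shifts cancel correctly in the triple convolution $\sE' * (1,\varphi)_*(\omega_X^{\otimes m} \otimes L) * \sE$ is the delicate part. An additional subtlety is that Rouquier's map $F$ genuinely mixes the automorphism and line-bundle components, so one cannot split the argument into a separate treatment of $\varphi$ and $L$; the pair must be transported as a unit, and this is the source of the ``further technicalities'' the author refers to, in contrast with the classical Hochschild homology case where only $({\rm id}_X, \sO_X)$ appears.
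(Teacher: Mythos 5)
Your proposal is correct, and it follows the same overall strategy as the paper (which itself adapts Orlov--C\u{a}ld\u{a}raru): pass to the induced equivalence $\rd(X\times X)\to\rd(Y\times Y)$ given by convolving with the kernel of $\Phi$ and of its adjoint, and identify the image of the twisted diagonal object. Where you differ is in how the key identification $\Theta\big((1,\varphi)_*(\omega_X^{\otimes m}\otimes L)\big)\cong(1,\psi)_*(\omega_Y^{\otimes m}\otimes M)$ is established. The paper proves this (Lemma \ref{image}) by a direct geometric computation: base change and the projection formula reduce the transform to an explicit object on $Y\times X\times Y$, the Popa--Schnell kernel identity \eqref{rouqkernel} together with $\sE\otimes p^*\omega_X\cong\sE\otimes q^*\omega_Y$ is inserted into that computation, and the residual convolution is recognized as the kernel of $\psi_*$ via adjoint kernels and uniqueness of Fourier--Mukai kernels. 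You instead argue at the level of functors: the object is the kernel of the autoequivalence $T_{(\varphi,L,m)}$, $\Theta$ sends kernels to kernels of conjugated autoequivalences, and the conjugate is computed by factoring $T_{(\varphi,L,m)}$ into a power of the Serre functor (intrinsic, hence transported to $\omega_Y^{\otimes m}\otimes(-)$ since $\ddim X=\ddim Y$) and the Rouquier-type autoequivalence attached to $(\varphi,L)$, whose conjugate is $T_{(\psi,M,0)}$ by the very definition of $F$; uniqueness of kernels then closes the argument. The two routes use the same two inputs (the Serre/canonical-bundle compatibility and the Rouquier conjugation, which is exactly what \eqref{rouqkernel} encodes), so your version is best viewed as a functorial repackaging of the paper's computation: it buys a cleaner argument in which the shift and duality bookkeeping you worry about is absorbed into the intrinsic nature of the Serre functor, at the cost of leaning on uniqueness of Fourier--Mukai kernels and on the kernel-level formulation of Rouquier's theorem, which is what the paper's explicit computation verifies by hand. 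Minor points: your factorization $T_{(\varphi,L,m)}=(\omega_X^{\otimes m}\otimes(-))\circ T_{(\varphi,L,0)}$ implicitly uses $\varphi^*\omega_X\cong\omega_X$ (true for any automorphism, so harmless), the image of the diagonal should be written $j_*\sO_Y$ rather than $i_*\sO_Y$, and the order of convolution should be arranged so that $\Theta(\sK)$ is the kernel of $\Phi\circ\Phi_{\sK}\circ\Phi^{-1}$; none of these affects the validity of the argument.
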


We now move our attention to the main application of Theorem \ref{intrHH}, namely the behavior of
\emph{cohomological support loci} under derived equivalence. 
These loci are defined as
$$V^k(\omega_X):=\{L\in {\rm Pic}^0(X)\;|\; h^k(X,\omega_X\otimes L)>0\}$$
where $X$ is a smooth projective variety and $k\geq 0$ is an integer. 
From here on we work over the field of the complex numbers.
The $V^k(\omega_X)$'s have been studied for instance in \cite{GL1}, \cite{GL2}, \cite{EL}, \cite{A}, \cite{Ha}, \cite{PP2}.
They are one of the most important tools in the birational study of irregular varieties; 
roughly speaking, they control the geometry of the Albanese map and the fibrations 
onto lower dimensional irregular varieties.
The following conjecture, and its weaker variant, predicts the behavior of cohomological 
support loci under derived equivalence. 
As a matter of notation, 
we denote by $V^k(\omega_X)_0$ the union of the irreducible components of $V^k(\omega_X)$ passing through the origin.
\begin{conj}[\cite{Po} Conjecture 1.2]\label{intrP}
If $X$ and $Y$ are smooth projective derived equivalent varieties, then  
$$V^k(\omega_X)\cong V^k(\omega_Y)\quad \mbox{ for all }\quad k\geq 0.$$
\end{conj}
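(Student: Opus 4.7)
The plan is to apply Theorem \ref{intrHH} with $m=1$ and $\varphi={\rm id}$ to each $L\in{\rm Pic}^0(X)$, extract $h^k(\omega_X\otimes L)$ from the resulting twisted Hochschild homology, and thereby turn Rouquier's isomorphism $F$ into an isomorphism between $V^k(\omega_X)$ and $V^k(\omega_Y)$.

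First I would analyze $HH_*(X,{\rm id},L,1)$. By the adjunction $(Li^*,i_*)$ together with the Hochschild--Kostant--Rosenberg decomposition $Li^*i_*\sO_X\cong\bigoplus_p\Omega_X^p[p]$, one finds
\[
HH_k(X,{\rm id},L,1)\cong\bigoplus_{p+q=k}H^q(X,\omega_X\otimes L\otimes\wedge^pT_X),
\]
so that $H^k(X,\omega_X\otimes L)$ appears as the $p=0$ summand. Theorem \ref{intrHH} identifies this graded module with $HH_*(Y,\psi,M,1)$, where $F({\rm id},L)=(\psi,M)$, compatibly with the $HH^*$-module structure. Since HKR also equips $HH^*$ with a Hodge bigrading, the goal is to show that the bigraded pieces on the two sides match, yielding $h^k(\omega_X\otimes L)=h^k(\omega_Y\otimes M)$ and hence an isomorphism $V^k(\omega_X)_0\cong V^k(\omega_Y)_0$ when combined with the Popa--Schnell description of the projection of $F$ onto ${\rm Pic}^0$. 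To reach the components of $V^k(\omega_X)$ not passing through the origin, I would pass to the finite \'etale cover of $X$ determined by a torsion translate realising such a component, lift the derived equivalence in the spirit of Bridgeland--Maciocia, and reduce to the identity-component case.

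The hard part will be the case $F({\rm id},L)=(\psi,M)$ with $\psi\neq{\rm id}$. The sheaf $(1,\psi)_*(\omega_Y\otimes M)$ is supported on the graph of $\psi$ rather than the diagonal, so $HH_*(Y,\psi,M,1)$ is supported on ${\rm Fix}(\psi)\subset Y$ and admits only a partial HKR-type decomposition; matching its graded pieces with the Hodge summands on the $X$-side goes beyond the Orlov--C\u{a}ld\u{a}raru formalism. A related difficulty is isolating the $p=0$ Hodge piece canonically: the $HH^*$-action preserves the total Hochschild degree but mixes Hodge types, so an extra ingredient --- for instance a Mukai-type pairing compatible with $F$, or a deformation argument along the tangent direction of $L$ in ${\rm Pic}^0(X)$ --- appears to be required to pin down $h^k(\omega_X\otimes L)$ inside the total dimension of $HH_k(X,{\rm id},L,1)$.
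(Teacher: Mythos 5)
There is a genuine gap, and it is worth being clear about its nature: the statement you are trying to prove is Conjecture \ref{intrP} (Popa's Conjecture 1.2), which this paper does \emph{not} prove in general. The paper only establishes partial cases: the full invariance of $V^0$ (Proposition \ref{hzero}), of $V^0\cap V^1$ and of $V^1(\omega_X)_0$ (Theorem \ref{intrV}), and the conjecture for threefolds under extra hypotheses (Theorem \ref{intr3F}). Your opening step is exactly the paper's starting point: Theorem \ref{intrHH} with $\varphi=1$, $m=1$, combined with the decomposition ${\rm Ext}^k_{X\times X}(i_*\sO_X,i_*\sF)\cong\bigoplus_q H^{k-q}(X,\Omega_X^{\ddim X-q}\otimes\omega_X^{-1}\otimes\sF)$, is precisely Corollary \ref{formula}. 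But the two ``hard parts'' you flag at the end are not technical loose ends to be patched; they are the precise obstructions that make the statement a conjecture, and your proposal supplies no working mechanism for either.

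Concretely: (a) for the case $F(1,L)=(\psi,M)$ with $\psi\neq 1$, the paper controls $\psi$ only in special situations --- for $L\in V^0_r(\omega_X^{\otimes m})$ by the support argument on ${\rm Fix}(\psi)$ in Proposition \ref{hzero}, and for components through the origin via Brion's structure theory and the \'etale locally trivial fibration in Claim \ref{claimetale}; for components of $V^k(\omega_X)$, $k\geq 1$, away from the origin nothing forces $\psi=1$, and your suggestion to pass to a finite \'etale cover attached to a torsion translate and ``lift the derived equivalence in the spirit of Bridgeland--Maciocia'' is unsupported: no general result lets you lift a Fourier--Mukai equivalence along such covers, nor would the loci of the cover cleanly recover the original component. (b) For isolating $h^k(X,\omega_X\otimes L)$ inside the direct sum, the issue is that a derived equivalence preserves only the total Hochschild grading, not the HKR/Hodge bigrading; asserting that ``the bigraded pieces on the two sides match'' is essentially a restatement of the open Hodge-number conjecture, not a step one can take. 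The paper never does this: it extracts individual $h^k$ only where the sum collapses or can be disentangled --- $k=0$ (one summand), $k=1$ via Serre duality and Hodge conjugate-linear symmetry (Corollary \ref{V1}), maximal Albanese dimension via the inclusions \eqref{forinc} together with Corollary \ref{inter}, and dimension three via the derived invariance of $\chi(\omega_X)$ --- which is exactly why only Theorem \ref{intrV} and Theorem \ref{intr3F}, and not Conjecture \ref{intrP}, are proved. As written, your proposal reproduces the known reduction and then defers the genuinely open steps.
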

\begin{conj}[\cite{Po} Variant 1.3]\label{intrPV}
 Under the assumptions of Conjecture \ref{intrP}, there exist isomorphisms
$$V^k(\omega_X)_0\cong V^k(\omega_Y)_0\quad \mbox{ for all }\quad k\geq 0.$$
\end{conj}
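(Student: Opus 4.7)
The plan is to apply Theorem \ref{intrHH} with $\varphi = {\rm id}$ and $m=1$, unpack the resulting twisted Hochschild homology into Hodge cohomology via a Hochschild--Kostant--Rosenberg (HKR) decomposition, and then isolate the individual loci $V^k(\omega_X)_0$ using Green--Lazarsfeld generic vanishing. The ambient isomorphism $V^k(\omega_X)_0 \cong V^k(\omega_Y)_0$ will be supplied directly by Rouquier's isomorphism $F$ restricted to identity components of ${\rm Pic}^0$. Concretely, for $L \in {\rm Pic}^0(X)$, the adjunction $Li^* \dashv i_*$ together with the HKR isomorphism $Li^* i_* \sO_X \cong \bigoplus_{j=0}^{n} \Omega^j_X[j]$ and the identification $(\Omega^j_X)^\vee \otimes \omega_X \cong \Omega^{n-j}_X$ give
\begin{equation*}
HH_k(X, {\rm id}, L, 1) \cong \bigoplus_{q-p=k-n} H^q(X, \Omega^p_X \otimes L), \qquad n = {\rm dim}\, X,
\end{equation*}
so that $H^k(X, \omega_X \otimes L)$ (the invariant controlling $V^k(\omega_X)$) appears as the $(p,q)=(n,k)$ summand.

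Write $F({\rm id}, L) = (\psi_L, M_L) \in {\rm Aut}^0(Y) \times {\rm Pic}^0(Y)$. Theorem \ref{intrHH} supplies $HH_*(X, {\rm id}, L, 1) \cong HH_*(Y, \psi_L, M_L, 1)$. The key technical step is to show that, for $L$ in the identity component ${\rm Pic}^0(X)_0$, the automorphism $\psi_L \in {\rm Aut}^0(Y)$ can be \emph{absorbed} without changing Ext-dimensions: the action of a connected algebraic group on discrete cohomological invariants is forced to be trivial, so the contribution of the graph $\Gamma_{\psi_L}$ matches that of the diagonal. Granted this, an HKR unpacking on $Y$ yields
\begin{equation*}
\bigoplus_{q-p=k-n} H^q(X, \Omega^p_X \otimes L) \cong \bigoplus_{q-p=k-n} H^q(Y, \Omega^p_Y \otimes M_L)
\end{equation*}
for every $L \in {\rm Pic}^0(X)_0$, with $L \mapsto M_L$ the restriction of the ${\rm Pic}^0$-component of $F$, an isomorphism of abelian varieties ${\rm Pic}^0(X)_0 \cong {\rm Pic}^0(Y)_0$.

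To isolate $V^k(\omega_X)_0$ from the Hodge-type sum above, I would invoke Green--Lazarsfeld generic vanishing: on each irreducible component of ${\rm Pic}^0(X)$ through the origin, the other Hodge summands $H^q(X, \Omega^p_X \otimes L)$ vanish for $L$ generic in the component, so the sum collapses on a dense open subset to the controlling term $H^k(\omega_X \otimes L)$. The same vanishing on $Y$ ensures that membership in $V^k(\omega_Y)_0$ is likewise controlled on a dense open subset by the $(n,k)$-summand. Since the sums agree functorially in $L$ and the matching components are Zariski-closed, $F$ then identifies $V^k(\omega_X)_0$ with $V^k(\omega_Y)_0$ componentwise.

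The main obstacle is the ``absorption'' claim in the middle paragraph: rigorously proving that the Ext-dimensions of $(1,\psi_L)_*(\omega_Y \otimes M_L)$ with $i_*\sO_Y$ agree with those of $i_*(\omega_Y \otimes M_L)$. This requires a structural analysis of $F$ in light of the Albanese/affine decomposition of ${\rm Aut}^0(Y)$ (Brion's structure theorem) and a careful matching of the HKR summands under the connected $\psi_L$-deformation back to the identity. A secondary difficulty is the final disentanglement of the individual Hodge summands via generic vanishing; this likely requires an induction on $k$ or on the dimension, and may force one to work componentwise rather than globally on ${\rm Pic}^0$.
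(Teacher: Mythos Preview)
The statement you are trying to prove is a \emph{conjecture} in the paper, not a theorem; the paper does not prove it in full generality. What the paper establishes is: the case $k=0$ (Proposition \ref{hzero}), the case $k=1$ (Corollary \ref{V1}), and the full statement for threefolds (Theorem \ref{3-foldsorigin}), the latter relying on the derived invariance of $\chi(\omega_X)$, which is known only in dimension $\leq 3$. So your proposal is an attempt to settle an open problem, and it should be read against that backdrop.

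Your outline tracks the paper's strategy closely up through the HKR step: Theorem \ref{intrHH} plus HKR gives, for each $k$, an isomorphism
\[
\bigoplus_{q=0}^{k} H^{k-q}\big(X,\Omega_X^{d-q}\otimes L\big)\;\cong\;\bigoplus_{q=0}^{k} H^{k-q}\big(Y,\Omega_Y^{d-q}\otimes M\big)
\]
whenever $F(1,L)=(1,M)$; this is the paper's Corollary \ref{formula} and Theorem \ref{v1origin}. The paper handles your ``absorption'' issue not by arguing that Ext-dimensions are insensitive to $\psi_L$, but by proving directly that $\psi_L=1$ for $L$ in the relevant loci through the origin, using Brion's structure theorem to realize $X$ as an \'etale-locally-trivial fibration over an abelian quotient (Claim \ref{claimetale}). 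Your heuristic ``connected groups act trivially on discrete invariants'' does not obviously yield this, since the object $(1,\psi)_*(\omega_Y\otimes M)$ genuinely changes with $\psi$ and the Ext groups need not be locally constant in $\psi$.

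The decisive gap, however, is your disentanglement step. Generic vanishing does not let you peel off $V^k(\omega_X)_0$ from the union $\bigcup_{q} V^{k-q}(\Omega_X^{d-q})_0$. Your argument says: on a dense open of each component, the extraneous summands $H^{k-q}(\Omega^{d-q}\otimes L)$ vanish, so the total sum collapses to $H^k(\omega_X\otimes L)$. But on that same dense open, $H^k(\omega_X\otimes L)$ may vanish too---indeed generic vanishing says it typically does---so you learn nothing about the closed locus $V^k(\omega_X)_0$ itself. Knowing that two unions of closed subsets agree, and that the pieces agree generically on each component, does not force the individual pieces to agree: the components of $V^{k-q}(\Omega^{d-q})_0$ for different $q$ can and do coincide or contain one another. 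The paper gets $V^1$ out of the $k=1$ sum only because Serre duality plus Hodge conjugation gives the identity $h^0(\Omega^{d-1}\otimes L)=h^1(\omega\otimes L)$, so the two summands are \emph{equal}, not merely generically zero. No analogous identity is available for $k\geq 2$ in arbitrary dimension, which is precisely why Conjecture \ref{intrPV} remains open beyond the low-dimensional and special cases treated in the paper.
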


It is important to emphasize that for all the applications we are interested in (e.g. invariance of the Albanese dimension, invariance of the
holomorphic Euler characteristic, invariance of Hodge numbers) it is in fact enough to verify Conjecture \ref{intrPV}. 
We also remark that Conjecture \ref{intrP} 
holds for varieties of general type since the cohomological support loci are birational 
invariants, while derived equivalent varieties of general type are birational by \cite{Ka2} Theorem 1.4.
Moreover, in \cite{Po} \S 2 it has been shown that Conjecture \ref{intrP} holds for surfaces as well.

In \S 3 we try to attack the above conjectures for varieties of arbitrary dimension.
To begin with, we show that Theorem \ref{intrHH} implies the derived invariance of 
$V^0(\omega_X)$ (see Proposition \ref{hzero}).
On the other hand, due to the possible presence of non-trivial automorphisms, 
the study of the derived invariance of the higher cohomological support loci
is more involved.
Nonetheless, by using a version of the Hochschild-Kostant-Rosenberg isomorphism and Brion's structural 
results on the actions of non-affine groups on smooth varieties,
we are able to show the derived invariance of $V^1(\omega_X)_0$ (see Corollary \ref{V1}).
The next theorem summarizes the main results on the derived invariance of these loci.
\begin{theorem}\label{intrV}
 Let $X$ and $Y$ be smooth projective derived equivalent varieties. Then the Rouquier isomorphism induces isomorphisms of algebraic sets
\begin{enumerate}
 \item $V^0(\omega_X)\cong V^0(\omega_Y)$.\\
\item $V^0(\omega_X)\cap V^1(\omega_X)\cong V^0(\omega_Y)\cap V^1(\omega_Y)$.\\
\item $V^1(\omega_X)_0\cong V^1(\omega_Y)_0$.\\
\end{enumerate}
\end{theorem}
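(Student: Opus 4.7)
The plan is to apply Theorem \ref{intrHH} with $m=1$ combined with the Hochschild-Kostant-Rosenberg isomorphism $Li^*i_*\sO_X\cong\bigoplus_p\Omega_X^p[p]$. For $\varphi={\rm id}_X$ and $L\in{\rm Pic}^0(X)$, adjunction yields the decomposition
\begin{equation*}
HH_k(X,{\rm id}_X,L,1)\;\cong\;\bigoplus_p H^{k-p}\bigl(X,\omega_X\otimes L\otimes \wedge^p T_X\bigr),
\end{equation*}
so in particular $HH_0(X,{\rm id}_X,L,1)\cong H^0(X,\omega_X\otimes L)$. For \emph{Part (i)}, set $(\psi,M):=F({\rm id}_X,L)$. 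Theorem \ref{intrHH} identifies this space with ${\rm Hom}_{Y\times Y}\bigl(i_*\sO_Y,(1,\psi)_*(\omega_Y\otimes M)\bigr)$, which vanishes whenever $\psi$ acts freely on $Y$ (for then the diagonal $\Delta_Y$ and the graph $\Gamma_\psi$ have disjoint supports). The key input is a rigidity argument: ${\rm Pic}^0(X)$ is an abelian variety, so its image in ${\rm Aut}^0(Y)$ under the Rouquier composition is a complete connected subgroup, necessarily contained in the anti-affine part of ${\rm Aut}^0(Y)$. By Brion's structural result on non-affine group actions, this part acts on $Y$ via translations on an abelian quotient, hence freely unless it is trivial. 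Consequently $L\in V^0(\omega_X)$ forces $\psi={\rm id}_Y$, the Hom reduces to $H^0(Y,\omega_Y\otimes M)$, and $F$ restricts to the desired isomorphism $V^0(\omega_X)\cong V^0(\omega_Y)$.

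For \emph{Part (ii)}, the correspondence $L\mapsto M$ from (i) is well-defined on $V^0(\omega_X)$, and the $k=1$ case of the HKR formula decomposes
\begin{equation*}
HH_1(X,{\rm id}_X,L,1)\cong H^1(X,\omega_X\otimes L)\oplus H^0(X,\omega_X\otimes L\otimes T_X),
\end{equation*}
with the analogous decomposition on $Y$. Theorem \ref{intrHH} equates the total dimensions, and to separate the two summands one exploits that the isomorphism is compatible with the $HH^*$-module structure. Concretely, the action of $H^0(T_X)\subset HH^1(X)$ on $HH_0(X,{\rm id}_X,L,1)=H^0(X,\omega_X\otimes L)$ takes values in the $H^0(\omega_X\otimes L\otimes T_X)$ summand of $HH_1$ and matches the corresponding action on $Y$; tracking this decomposition isolates $H^1$ from the $T$-twisted summand and yields the derived invariance of $h^1(\omega_X\otimes L)$ along $V^0$, hence (ii).

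For \emph{Part (iii)}, one no longer has $L\in V^0$ in general, so the rigidity trick of (i) does not apply. Instead two inputs are combined: (a) the generic-vanishing description of the positive-dimensional components of $V^1(\omega_X)_0$ through the origin as translates of abelian subvarieties of ${\rm Pic}^0(X)$ pulled back from maps $X\to A$ onto abelian varieties; and (b) Brion's structure theorem, which associates to $X$ a canonical abelian quotient corresponding to the anti-affine part of ${\rm Aut}^0(X)$. Rouquier's isomorphism identifies the anti-affine parts of ${\rm Aut}^0(X)$ and ${\rm Aut}^0(Y)$ and hence the distinguished abelian quotients of $X$ and $Y$; transporting the description of $V^1(\omega_X)_0$ via this identification produces the isomorphism $V^1(\omega_X)_0\cong V^1(\omega_Y)_0$.

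The main obstacle throughout is the potentially non-trivial automorphism $\psi$ in $F({\rm id}_X,L)$, which alters the support geometry on $Y\times Y$ and blocks a direct identification of the twisted Hochschild homology with standard sheaf cohomology of $\omega_Y\otimes M$. It is handled by the completeness of ${\rm Pic}^0(X)$ in (i), by the $HH^*$-module structure in (ii), and most delicately through Brion's machinery on non-affine group actions in (iii); this last step is the principal technical challenge of the theorem.
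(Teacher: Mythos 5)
Your overall starting point (Theorem \ref{intrHH} plus the HKR-type decomposition of Corollary \ref{formula}) is the right one, but each of the three parts has a concrete problem. For (i), the paper's argument (Proposition \ref{hzero}) rewrites the relevant space as ${\rm Hom}_Y\big((1,\psi)^*j_*\sO_Y,\ \omega_Y^{\otimes m}\otimes M\big)$ and observes that the source has torsion cohomology supported on the fixed locus of $\psi$, so a nonzero map to a line bundle forces the fixed locus to be all of $Y$, i.e.\ $\psi=1$; no group-theoretic rigidity is needed. Your weaker vanishing criterion (disjointness of $\Delta_Y$ and $\Gamma_\psi$, i.e.\ a \emph{free} action) has to be patched by the claim that the abelian subvariety $B\subset{\rm Aut}^0(Y)$ acts freely unless trivial, and that claim is false: Brion's results only give an \'etale locally trivial fibration $Y\to B/H$ with $H$ finite, and nontrivial elements of $H$ act trivially on the base and may well have fixed points on $Y$ (bielliptic-type examples give an elliptic curve in ${\rm Aut}^0(Y)$ with nontrivial elements fixing points), so your argument does not rule out $\psi\in B\cap H\setminus\{1\}$ for a given $L\in V^0(\omega_X)$. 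For (ii), the missing idea is the Hodge-theoretic one: for $L\in{\rm Pic}^0(X)$ one has $h^0(X,\Omega_X^{d-1}\otimes L)=h^1(X,\omega_X\otimes L)$ by Serre duality and the linear-conjugate isomorphism, so the two summands in the $k=1$ instance of Corollary \ref{formula} have \emph{equal} dimensions and the equality of total dimensions already yields $h^1(X,\omega_X\otimes L)=h^1(Y,\omega_Y\otimes M)$ (this is exactly how Corollaries \ref{V1} and \ref{inter} conclude). Your proposed separation of the summands via the $HH^*$-module action of $H^0(X,T_X)$ is not carried out and cannot work in general: $H^0(X,T_X)=0$ whenever ${\rm Aut}^0(X)$ is discrete, in which case that action carries no information, yet the statement still has to be proved.

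Part (iii) is where the real gap lies: the whole difficulty is to show that $F(1,L)=(1,M)$, i.e.\ that the automorphism component is trivial, for every $L$ in the loci through the origin, and your sketch never establishes this, nor why $M$ should lie in $V^1(\omega_Y)_0$. Transporting a ``description'' of $V^1(\omega_X)_0$ through identified abelian quotients cannot do the job: the positive-dimensional components of $V^1(\omega_X)_0$ are in general pulled back from irregular fibrations (e.g.\ onto curves of genus $\geq 2$) that have nothing to do with the abelian quotient attached to the anti-affine part of ${\rm Aut}^0(X)$, and moreover the Rouquier isomorphism does not respect the product ${\rm Aut}^0\times{\rm Pic}^0$, so even matching the anti-affine parts requires the Popa--Schnell analysis rather than being automatic. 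The paper's proof (Theorem \ref{v1origin}, Claim \ref{claimetale}) proceeds differently: if the abelian part $A$ of ${\rm Aut}^0(X)$ is trivial, then $F(1,{\rm Pic}^0(X))=(1,{\rm Pic}^0(Y))$ and Corollary \ref{formula} applies outright; otherwise Brion's theorem gives an \'etale locally trivial fibration $\xi:X\to A/H$, one pulls cohomology back along the \'etale map $A\times Z\to X$ and uses the K\"unneth formula to show that every component through the origin of the relevant loci is contained in $(\ker f_1^*)_0$, which Popa--Schnell prove is sent by $F$ into $1\times{\rm Pic}^0(Y)$; the $k=1$ dimension comparison plus the Hodge symmetry used in (ii), applied to both $F$ and $F^{-1}$, then gives $V^1(\omega_X)_0\cong V^1(\omega_Y)_0$. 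Without an argument of this type, your proposal for (iii) does not prove the statement.
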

We note that
(i) also holds if we consider arbitrary powers of the canonical bundle (see Proposition \ref{hzero}).
We point out also that cases in which the Rouquier isomorphism induces the full isomorphism 
$V^1(\omega_X)\cong V^1(\omega_Y)$ occur for instance when either $X$ is of maximal Albanese dimension (see Corollary \ref{v1max}), or when
the neutral component of the automorphism group, ${\rm Aut}^0(X)$, is affine (see Remark \ref{rmksplit});
Theorem \ref{intrV} is proved in \S 3.

Next we study Conjectures \ref{intrP} and \ref{intrPV} for varieties of dimension three.
In the process we recover Conjecture \ref{intrP} in dimension two as well making the isomorphisms on cohomological support loci 
explicit. 
In the following theorem we collect all results concerning the behavior of cohomological support loci of derived equivalent threefolds.
We denote by ${\rm alb}_X:X\longrightarrow {\rm Alb}(X)$ the Albanese map of $X$ and we say that $X$ is of \emph{maximal Albanese dimension}
if $\ddim {\rm alb}_X(X)=\ddim X$, i.e. ${\rm alb}_X$ is generically finite onto its image.
\begin{theorem}\label{intr3F}
 Let $X$ and $Y$ be smooth projective irregular derived equivalent threefolds. Then 
\begin{enumerate}
\item Conjecture \ref{intrPV} holds.\\
\item Conjecture \ref{intrP} holds if one of the following hypotheses is satisfied\\
\begin{enumerate}
\item $X$ is of maximal Albanese dimension.\\ 
\item $V^k(\omega_X)={\rm Pic}^0(X)$ for some $k\geq 0$ (for instance, by
\cite{PP2} Theorem E, $V^0(\omega_X)={\rm Pic}^0(X)$ 
whenever ${\rm alb}_X(X)$ is not fibered 
in sub-tori and $V^0(\omega_X)\neq \emptyset$).\\
\item ${\rm Aut}^0(X)$ is affine (for instance, by a theorem of Nishi, \cite{Ma} Theorem 2, 
this again happens when ${\rm alb}_X(X)$ is not fibered in sub-tori).\\
\end{enumerate}
\item If $q(X)\geq 2$, then $\ddim V^k(\omega_X)=\ddim V^k(\omega_Y)$ for all $k\geq 0$.
\end{enumerate}
\end{theorem}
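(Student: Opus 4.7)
The plan is to reduce each of the three assertions to results already established in the paper---principally Theorem \ref{intrV} and the derived invariance of the Albanese dimension for varieties of non-negative Kodaira dimension mentioned in the introduction---together with the special geometry of threefolds. Since $\dim X = 3$ is derived invariant and Serre duality gives $V^3(\omega_X)=\{\sO_X\}$, Theorem \ref{intrV}(i),(iii) already supply $V^0(\omega_X)\cong V^0(\omega_Y)$ and $V^1(\omega_X)_0\cong V^1(\omega_Y)_0$. The real content of part (i) is therefore the isomorphism $V^2(\omega_X)_0\cong V^2(\omega_Y)_0$.

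For $V^2_0$, I would exploit the structure of cohomological support loci via generic vanishing: by the Green--Lazarsfeld/Simpson/Chen--Jiang package, every positive-dimensional component of $V^k(\omega_X)$ through the origin is a subtorus of ${\rm Pic}^0(X)$ arising from a surjective morphism of $X$ onto a lower-dimensional normal variety of maximal Albanese dimension. For a threefold, a positive-dimensional component of $V^2_0$ must come from such an irregular fibration onto a curve or a surface, and these lower-dimensional fibrations are controlled by Theorem \ref{intrV}(ii) together with the known cases of Conjecture \ref{intrP} for curves and surfaces (\cite{Po} \S 2). Matching these fibrations componentwise via the Rouquier isomorphism restricted to ${\rm Pic}^0(X)\to {\rm Pic}^0(Y)$ then produces the required isomorphism.

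For part (ii) I would verify, in each of the three regimes, that every component of $V^k(\omega_X)$ passes through the origin, whence Conjecture \ref{intrP} follows from part (i). In case (a) this is a direct consequence of the Chen--Jiang theorem for varieties of maximal Albanese dimension, combined with the derived invariance of torsion translates; case (b) is tautological once ${\rm Pic}^0(X)\cong {\rm Pic}^0(Y)$ is identified through Rouquier; case (c) is exactly the situation of Remark \ref{rmksplit} in which the Rouquier isomorphism splits as a direct product, so the methods of Theorem \ref{intrV} extend from $V^1_0$ to the whole $V^k$. For part (iii), the equality of dimensions on components through the origin follows from part (i), while for components not passing through the origin the derived invariance of the Albanese dimension together with the codimension estimate ${\rm codim}_{{\rm Pic}^0}V^k(\omega_X)\geq k-(\dim X-\dim\,{\rm alb}_X(X))$ pins down the top dimension on each side when $q\geq 2$.

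The principal technical obstacle I expect is the $V^2_0$ step in part (i): converting it via Serre duality to a statement about $V^1(\sO_X)$ runs into exactly the non-split Rouquier obstruction that forced Theorem \ref{intrV} to stop at $V^1_0$, and circumventing this requires a careful case-by-case analysis of the Albanese fibrations of threefolds in each Albanese dimension. Controlling the positive-dimensional, possibly non-affine component of ${\rm Aut}^0(X)$---via Brion's structural theorem on non-affine group actions, as in \S 3---is where the bulk of the work should lie.
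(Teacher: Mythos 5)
Your outline misses the one idea that actually makes part (i) work: the derived invariance of the holomorphic Euler characteristic in dimension three (\cite{PS} Corollary C). In the paper, once Theorem \ref{v1origin} guarantees that $L\in V^2(\omega_X)_0$ implies $F(1,L)=(1,M)$, Corollary \ref{formula} gives $h^k(X,\omega_X\otimes L)=h^k(Y,\omega_Y\otimes M)$ for $k=0,1$, and then $\chi(\omega_X\otimes L)=\chi(\omega_X)=\chi(\omega_Y)=\chi(\omega_Y\otimes M)$ forces $h^2(X,\omega_X\otimes L)=h^2(Y,\omega_Y\otimes M)$; this is the whole of the $V^2_0$ step. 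Your substitute --- decompose $V^2(\omega_X)_0$ into subtori attached to irregular fibrations and ``match these fibrations componentwise'' with fibrations of $Y$ --- has no mechanism behind it: no result available at this stage transfers a fibration of a threefold $X$ to one of a Fourier--Mukai partner $Y$ (the statement of that type, Corollary \ref{intrfibra}, is in the paper a \emph{consequence} of Theorem \ref{intr3F}, so your route is circular), and it also says nothing about zero-dimensional components. In part (ii)(a) your reduction is based on the claim that for $X$ of maximal Albanese dimension every component of $V^k(\omega_X)$ passes through the origin; that is false --- the structure theorems give torsion translates of subtori, not subtori through the origin --- so Conjecture \ref{intrP} does not follow from Conjecture \ref{intrPV} in this way. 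The paper instead uses the inclusions \eqref{forinc} $V^k(\omega_X)\subset V^0(\omega_X)$, valid in maximal Albanese dimension, together with Proposition \ref{hzero}, Corollary \ref{inter} and Corollary \ref{v1max}, to see that the Rouquier isomorphism has trivial automorphism component on all of $V^k(\omega_X)$, and then runs the same $\chi$ argument for $k=2$; cases (b) and (c) are handled through Remark \ref{rmksplit} (where $F(1,{\rm Pic}^0(X))=(1,{\rm Pic}^0(Y))$), again with the $\chi$ trick for $V^2$, which your phrase ``the methods of Theorem \ref{intrV} extend'' glosses over.

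Part (iii) is where most of the paper's work lies, and your sketch does not engage with it. The codimension estimate you invoke only bounds $\dim V^k(\omega_X)$ from above; it cannot produce the equality $\dim V^k(\omega_X)=\dim V^k(\omega_Y)$ for components away from the origin, which is precisely the issue (the components through the origin are already handled by part (i)). The paper first reduces, using (ii), Proposition \ref{hzero}, Corollary \ref{inter}, Theorem \ref{intralb} and \cite{PS}, to the case where $X$ is not of general type, not of maximal Albanese dimension, ${\rm Aut}^0(X)$ is not affine (hence $\chi(\omega_X)=0$) and $V^0(\omega_X)\subsetneq{\rm Pic}^0(X)$, and then, in Propositions \ref{pr1}--\ref{pr5}, computes $\dim V^1(\omega_X)$ and $\dim V^2(\omega_X)$ purely in terms of the derived invariants $\kappa(X)$, $q(X)$ and $\dim{\rm alb}_X(X)$, using the Iitaka fibration, Lemma \ref{hacon}, Koll\'ar's theorems on higher direct images of dualizing sheaves, GV-sheaf statements and surface classification. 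Nothing in your proposal substitutes for this classification, so as written parts (ii)(a) and (iii) rest on incorrect or insufficient reductions, and part (i) rests on an unproved fibration-matching principle.
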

%
Point (iii) brings evidence to a further variant of Conjecture \ref{intrP} predicting the invariance of the dimensions of cohomological
support loci (\emph{cf}. \cite{Po} Variant 1.4); partial results for the case $q(X)=1$ are described in Remark \ref{q=1}.
Since the proofs of Theorems \ref{intrV} and \ref{intr3F} extend to analogous results regarding cohomological support loci of 
bundles of holomorphic $p$-forms, when possible we will prove them in such generality.
Please refer to Theorem \ref{3-foldsorigin} and \S 6 for the proof of Theorem \ref{intr3F}.

Finally, we move our attention to applications of Theorems \ref{intrV}, \ref{intr3F} and \ref{intralb}.
The first regards the behavior of the Albanese dimension, $\ddim {\rm alb}_X(X)$, under derived equivalence.
According to Conjecture \ref{intrPV}, the Albanese dimension is 
expected to be preserved under derived equivalence as it can be read off 
from the dimensions of the $V^k(\omega_X)_0$'s (\emph{cf}. \eqref{foralb}). 
By Theorem \ref{intr3F}, this reasoning shows the derived invariance of the Albanese dimension for varieties of dimension up to three.
In higher dimension we establish this invariance for varieties having non-negative Kodaira dimension $\kappa(X)$
using the derived invariance of the irregularity and an extension of a result due to Chen-Hacon-Pardini 
(\cite{HP} Proposition 2.1, \cite{CH2} Corollary 3.6) on the geometry of the Albanese map via the Iitaka fibration; see \S 5.
\begin{theorem}\label{intralb}
 Let $X$ and $Y$ be smooth projective derived equivalent varieties. If $\ddim X\leq 3$, or if $\ddim X>3$ and $\kappa(X)\geq 0$, then
$$\ddim {\rm alb}_X(X)=\ddim {\rm alb}_Y(Y).$$ 
\end{theorem}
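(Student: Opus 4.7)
The strategy splits naturally according to dimension. When $\dim X \leq 3$, Theorem \ref{intr3F}(i) (combined with the case of curves and Popa's result for surfaces) guarantees that the Rouquier isomorphism identifies $V^k(\omega_X)_0$ with $V^k(\omega_Y)_0$ for every $k \geq 0$. I would then invoke the formula extracting the Albanese dimension from the local structure of these loci at the origin---essentially the Green--Lazarsfeld bound $\codim V^k(\omega_X)_0 \geq k - (\dim X - \dim {\rm alb}_X(X))$ together with the sharpness statement showing that some $k$ realizes equality---to conclude $\dim {\rm alb}_X(X) = \dim {\rm alb}_Y(Y)$.

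For $\dim X > 3$ with $\kappa(X) \geq 0$, I would first record that both $q$ (Popa--Schnell, via Rouquier's isomorphism) and $\kappa$ (Kawamata) are derived invariants, so $q(X) = q(Y)$ and $\kappa(X) = \kappa(Y)$. In the subcase $\kappa(X) = 0$, Kawamata's surjectivity theorem gives $\dim {\rm alb}_X(X) = q(X)$, and identically for $Y$, so the equality is immediate. For $\kappa(X) \geq 1$, I would pass to a smooth birational model equipped with an Iitaka fibration $f \colon X' \to Z$ (so $\dim Z = \kappa(X)$), and use an extension of the Chen--Hacon--Pardini theorem to express $\dim {\rm alb}_X(X)$ in terms of $\kappa(X)$ and the Albanese-theoretic data of a general fiber $W$. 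Since $\kappa(W) = 0$, Kawamata applied fiberwise yields $\dim {\rm alb}_W(W) = q(W)$, and the problem thus reduces to establishing the derived invariance of $q(W)$.

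The main obstacle is precisely this last step: showing that the irregularity of the general Iitaka fiber is itself a derived invariant, despite the fact that the Iitaka fibrations of $X$ and $Y$ are \emph{a priori} only linked through the abstract isomorphism of canonical rings. I expect to handle it by combining the derived invariance of the canonical ring with the compatibility of Rouquier's isomorphism on ${\rm Pic}^0$: the subtorus of ${\rm Pic}^0(X)$ pulled back from ${\rm Pic}^0(Z)$ should match, under the Rouquier isomorphism, the corresponding subtorus for $Y$, and the complementary piece---which in the Chen--Hacon--Pardini picture controls $q(W)$---is thereby canonically identified between $X$ and $Y$. This identification, once set up in the Iitaka-fibration analysis, closes the argument.
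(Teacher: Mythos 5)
Your treatment of the cases $\dim X\le 3$ (Theorem \ref{intr3F}(i) plus \eqref{foralb}) and $\kappa(X)=0$ (Kawamata's surjectivity plus $q(X)=q(Y)$) matches the paper. The gap is in the case $\kappa(X)\ge 1$. You propose to express $\dim {\rm alb}_X(X)$ in terms of $\kappa(X)$ and Albanese data of a general Iitaka fiber $W$, and then to reduce to the derived invariance of $q(W)$; neither step works as stated. First, the Albanese dimension is not a function of $\kappa(X)$ and fiber data: the correct extension of Chen--Hacon--Pardini (Lemma \ref{hacon} of the paper) reads $q(X)-\dim{\rm alb}_X(X)=q(Z)-\dim{\rm alb}_Z(Z)$, so what enters is the \emph{base} data $q(Z)$ and $\dim{\rm alb}_Z(Z)$, and the latter need not equal $\dim Z=\kappa(X)$ once $X$ is not of maximal Albanese dimension (e.g. $X=Z\times A$ with $Z$ of general type, $q(Z)=1$, $A$ abelian: $\dim{\rm alb}_X(X)=1+\dim A$, whereas ``$\kappa(X)+q(\mbox{fiber})$'' gives $2+\dim A$). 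Second, what the fibration contributes is $q(X)-q(Z)$, the dimension of $\ker({\rm Alb}(X)\to{\rm Alb}(Z))$, which is the dimension of the image of the general fiber inside ${\rm Alb}(X)$; this is in general strictly smaller than $q(W)$ (for a non-isotrivial elliptic surface over a curve of genus $\ge 2$ one has $q(X)-q(Z)=0$ while the fiber has irregularity $1$), so the ``complementary subtorus controls $q(W)$'' identification is not correct, and in any case the asserted compatibility of the Rouquier isomorphism with the subtori pulled back from the Iitaka bases is left unproved.

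The paper's argument shows that none of this is needed: since derived equivalent varieties have isomorphic canonical rings (Orlov), the Iitaka bases $Z$ and $W$ of $X$ and $Y$ are birational, hence $q(Z)=q(W)$ and $\dim{\rm alb}_Z(Z)=\dim{\rm alb}_W(W)$ automatically. Kawamata's theorem guarantees that the general fibers of both Iitaka fibrations have surjective Albanese maps, so Lemma \ref{hacon} applies to both and yields $q(X)-\dim{\rm alb}_X(X)=q(Z)-\dim{\rm alb}_Z(Z)=q(W)-\dim{\rm alb}_W(W)=q(Y)-\dim{\rm alb}_Y(Y)$, and one concludes from $q(X)=q(Y)$ (Popa--Schnell). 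To repair your sketch, replace ``fiber data'' by ``base data'' and use birational invariance of the Iitaka base instead of any Rouquier-type compatibility.
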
 

The second application concerns the holomorphic Euler characteristic.
This is expected to be the same for arbitrary derived equivalent smooth projective varieties 
since the Hodge numbers are expected to be preserved (which is known to hold in dimension up to three; \emph{cf}. \cite{PS} 
Corollary C). We deduce this for varieties of large Albanese dimension as a consequence of the previous results and generic vanishing.
\begin{cor}\label{intreuler}
 Let $X$ and $Y$ be smooth projective derived equivalent varieties.
If $\ddim {\rm alb}_X(X)=\ddim X$, or if $\ddim {\rm alb}_X(X)=\ddim X-1$ and $\kappa(X)\geq 0$, then 
$$\chi(\omega_X)=\chi (\omega_Y).$$
\end{cor}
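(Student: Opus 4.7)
The plan is to compute $\chi(\omega_X)$ using cohomology of generic twists $\omega_X\otimes L$ with $L\in\mathrm{Pic}^0(X)$, killing the higher cohomology by Green--Lazarsfeld generic vanishing and then matching the surviving $h^0$-term on $X$ with the corresponding one on $Y$ via Proposition \ref{hzero} and Rouquier's isomorphism. Before beginning either case, note that Theorem \ref{intralb} and the derived invariance of the Kodaira dimension recalled in the introduction guarantee that $Y$ satisfies the same hypothesis as $X$, and that (by Popa--Schnell, or equivalently by the appropriate component of Rouquier's isomorphism) there is an isomorphism of algebraic groups $\mathrm{Pic}^0(X)\cong\mathrm{Pic}^0(Y)$.

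\emph{Case of maximal Albanese dimension.} By Green--Lazarsfeld, $V^k(\omega_X)$ is a proper subvariety of $\mathrm{Pic}^0(X)$ for every $k\geq 1$, and the same holds on $Y$. For a very general $L\in\mathrm{Pic}^0(X)$ one then has $H^k(X,\omega_X\otimes L)=0$ for all $k\geq 1$, whence $\chi(\omega_X)=h^0(X,\omega_X\otimes L)$. Writing $F(\mathrm{id}_X,L)=(\psi,M)$ for Rouquier's isomorphism, Proposition \ref{hzero} (extracted from Theorem \ref{intrHH} in the case $m=1$) yields $h^0(X,\omega_X\otimes L)=h^0(Y,\omega_Y\otimes M)$. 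Since $F$ is an isomorphism of algebraic groups, $M$ is very general in $\mathrm{Pic}^0(Y)$ as well, and the symmetric argument on $Y$ gives $h^0(Y,\omega_Y\otimes M)=\chi(\omega_Y)$, closing this case.

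\emph{Case $\dim\mathrm{alb}_X(X)=\dim X-1$ and $\kappa(X)\geq 0$.} Green--Lazarsfeld now only forces $V^k(\omega_X)\subsetneq\mathrm{Pic}^0(X)$ for $k\geq 2$, so that for a very general $L$, $\chi(\omega_X)=h^0(X,\omega_X\otimes L)-h^1(X,\omega_X\otimes L)$. The strategy is to eliminate the $h^1$-term geometrically. Since $\kappa(X)\geq 0$ excludes uniruled fibers, the general fiber of $\mathrm{alb}_X\colon X\to Z:=\mathrm{alb}_X(X)$ is a smooth curve of positive genus, and by Iitaka/Albanese fibration arguments \emph{à la} Chen--Hacon--Pardini one reduces to the situation in which this fiber is an elliptic curve. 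Koll\'ar's decomposition for $R\mathrm{alb}_{X,*}\omega_X$ then expresses $\chi(\omega_X)$ as a combination of Euler characteristics of twisted canonical sheaves on $Z$, which has maximal Albanese dimension; Theorem \ref{intralb} combined with the isomorphism of Picard varieties forces the corresponding decomposition on the $Y$ side, and one applies the argument of the first case on $Z$ to conclude.

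\emph{Main obstacle.} The delicate step is precisely the second case: Green--Lazarsfeld alone does not kill $h^1(X,\omega_X\otimes L)$, and Theorem \ref{intrV}(i) is by itself too coarse to match this term between $X$ and $Y$. What is needed is to perform the birational reduction to a variety of maximal Albanese dimension in a manner compatible with the derived equivalence, so that the Hodge line bundle and the other auxiliary data produced by Koll\'ar's decomposition on $Z$ are identified on both sides; this mirrors the strategy used in the proof of Theorem \ref{intralb}.
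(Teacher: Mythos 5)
Your first case is essentially the paper's argument and is fine up to two small points: Proposition \ref{hzero} only compares $h^0$'s for $L\in V^0(\omega_X)$, so when $V^0(\omega_X)\subsetneq{\rm Pic}^0(X)$ you should simply note that $V^0(\omega_Y)\cong V^0(\omega_X)$ is then also proper and both Euler characteristics vanish; and the assertion that $M$ is again very general does not follow merely from $F$ being a group isomorphism, since $F$ need not carry $1\times{\rm Pic}^0(X)$ into $1\times{\rm Pic}^0(Y)$ --- it is exactly Proposition \ref{hzero} (when $V^0(\omega_X)={\rm Pic}^0(X)$), or Remark \ref{rmksplit}, that guarantees this restriction and hence lets you transport genericity.

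The genuine gap is the second case, and you flag it yourself: the proposed reduction (fibers of ${\rm alb}_X$ of positive genus, a further unexplained reduction to elliptic fibers, Koll\'ar's decomposition of $R\,{\rm alb}_{X*}\omega_X$, and a ``derived-equivalence-compatible'' identification of the resulting data on the Albanese images) is never carried out, and there is no mechanism in the paper --- nor an obvious one --- by which a derived equivalence of $X$ and $Y$ identifies such pushforward data, since the Albanese maps themselves are not related by the equivalence. The paper avoids all of this geometry: because $\ddim{\rm alb}_X(X)=d-1$, the chain \eqref{forinc} gives $V^1(\omega_X)\supset V^2(\omega_X)\supset\cdots$, so for $L\notin V^1(\omega_X)$ (resp.\ $L\notin V^2(\omega_X)$) one has $\chi(\omega_X)=h^0(X,\omega_X\otimes L)$ (resp.\ $h^0-h^1$). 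One then splits into four cases according to whether $V^0(\omega_X)$ and $V^1(\omega_X)$ equal ${\rm Pic}^0(X)$. Whenever one of them is all of ${\rm Pic}^0(X)$, Proposition \ref{hzero} or Remark \ref{rmksplit} gives $F(1,{\rm Pic}^0(X))=(1,{\rm Pic}^0(Y))$, so one can choose $F(1,L)=(1,M)$ with $L,M$ avoiding the relevant proper loci, and Corollary \ref{formula} with $m=0$ and $k=0,1$, combined with $h^0(X,\Omega_X^{d-1}\otimes L)=h^1(X,\omega_X\otimes L)$ (Serre duality plus the Hodge linear-conjugate isomorphism, as in Corollary \ref{V1}), matches $h^0-h^1$ on the two sides; when both loci are proper, Proposition \ref{hzero} and Corollary \ref{V1} show the same on $Y$ and both Euler characteristics are zero. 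This direct matching of the $h^1$-term via twisted Hochschild homology is the ingredient your outline is missing; the hypothesis $\kappa(X)\geq 0$ is needed only to invoke Theorem \ref{intralb} so that $\ddim{\rm alb}_Y(Y)=d-1$, not to control the fibers of the Albanese map.
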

An immediate consequence is the derived invariance of two of the Hodge numbers for fourfolds satisfying the
hypotheses of Corollary \ref{intreuler}.
\begin{cor}\label{intrh02}
Let $X$ and $Y$ be smooth projective derived equivalent fourfolds. If $\ddim {\rm alb}_X(X)=4$, or if $\ddim {\rm alb_X}(X)=3$
and $\kappa(X)\geq 0$, then $$h^{0,2}(X)=h^{0,2}(Y)\quad \mbox{ and }\quad h^{1,3}(X)=h^{1,3}(Y).$$
\end{cor}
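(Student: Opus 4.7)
The plan is to combine Corollary \ref{intreuler} with the derived invariance of Hochschild homology (via the HKR isomorphism) and the Hodge/Serre symmetries $h^{p,q}=h^{q,p}$, $h^{p,q}=h^{n-p,n-q}$ to isolate the two individual Hodge numbers from the sums that $HH$-invariance naturally packages.

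First, under either stated hypothesis, Corollary \ref{intreuler} gives $\chi(\omega_X)=\chi(\omega_Y)$. Since $\ddim X=4$ is even, Serre duality yields $\chi(\omega_X)=\chi(\sO_X)$, so the Euler characteristic equality reads
\[
1-q(X)+h^{0,2}(X)-h^{0,3}(X)+h^{0,4}(X)\;=\;1-q(Y)+h^{0,2}(Y)-h^{0,3}(Y)+h^{0,4}(Y).
\]
The Popa--Schnell theorem recalled in the introduction supplies $q(X)=q(Y)$, so only the three higher terms require analysis.

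Next, I exploit the HKR decomposition $\dim HH_k(X)=\sum_{q-p=k} h^{q,p}(X)$, each summand being a derived invariant in view of the C\u{a}ld\u{a}raru--Orlov theorem. For a fourfold, $\dim HH_{-4}(X)=h^{0,4}(X)$ outright, forcing $h^{0,4}(X)=h^{0,4}(Y)$. Similarly, $\dim HH_{-3}(X)=h^{0,3}(X)+h^{1,4}(X)$, and Serre duality followed by Hodge symmetry gives $h^{1,4}=h^{3,0}=h^{0,3}$, so the invariant sum equals $2h^{0,3}(X)$ and hence $h^{0,3}(X)=h^{0,3}(Y)$. Feeding these two equalities back into the Euler characteristic identity isolates $h^{0,2}(X)=h^{0,2}(Y)$.

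For the remaining Hodge number I pass to the next slot of $HH_*$: $\dim HH_{-2}(X)=h^{0,2}(X)+h^{1,3}(X)+h^{2,4}(X)$, and the same duality identification gives $h^{2,4}=h^{2,0}=h^{0,2}$, so this invariant equals $2h^{0,2}(X)+h^{1,3}(X)$; combined with the already established invariance of $h^{0,2}$, it yields $h^{1,3}(X)=h^{1,3}(Y)$. Once Corollary \ref{intreuler} is in hand there is essentially no real obstacle here: the argument is a linear-algebra extraction from the HH decomposition, and the only step requiring care is correctly tracking the Serre/Hodge identifications that force certain sums to collapse to $2h^{0,p}$ rather than to genuinely distinct summands.
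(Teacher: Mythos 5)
Your argument is correct and is essentially the paper's own proof: the invariance of the Hochschild homology pieces (your $HH_{-4},HH_{-3}$ are the paper's $HH_0,HH_1$ in its indexing) gives $h^{0,4}$ and $h^{0,3}$, Corollary \ref{intreuler} together with $q(X)=q(Y)$ then isolates $h^{0,2}$, and the next graded piece ($HH_2$ in the paper's convention) combined with Serre/Hodge symmetry yields $h^{1,3}$. The only differences are cosmetic (indexing convention and passing through $\chi(\sO_X)$ instead of $\chi(\omega_X)$ directly).
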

We remark that in \cite{PS} Corollary 3.4 the authors establish the invariance of $h^{0,2}$ and $h^{1,3}$ under different hypotheses, namely 
when ${\rm Aut}^0(X)$ is not affine (we recall that $h^{0,4}$, $h^{0,3}$, $h^{0,1}$ and $h^{1,2}$ are 
always known to be invariant, \emph{cf.} \cite{PS}). 
Corollaries \ref{intreuler} and \ref{intrh02} are proved in \S 7.

We now present our last application, in a direction which is one of the main motivations for Conjectures \ref{intrP} and \ref{intrPV} as 
explained in \cite{Po}.
From the classification of Fourier-Mukai equivalences for surfaces (\cite{Ka2}, \cite{BM}), it is known that if
$X$ admits a fibration 
$f:X\longrightarrow C$ onto a smooth curve of genus $\geq 2$, then any of its Fourier-Mukai partners admits 
a fibration onto the same curve.
Here we use our analysis, and 
a theorem of Green-Lazarsfeld regarding the properties of 
positive-dimensional irreducible components of the cohomological support loci, to investigate the behavior of fibrations of derived equivalent 
threefolds onto irregular varieties.
Recall that a smooth variety $X$ is called of \emph{Albanese general type} if ${\rm alb}_X$ 
is non-surjective and generically finite onto its image. 
The proof of the next corollary is contained in Proposition \ref{corfib} and Remark \ref{rmkfib}.
\begin{cor}\label{intrfibra}
Let $X$ and $Y$ be smooth projective derived equivalent threefolds.
There exists a morphism $f:X\longrightarrow W$ with connected fibers onto a normal variety $W$ of dimension $\leq 2$ such that any smooth 
model of $W$ is of Albanese general type if and only if $Y$ has a fibration of the same type.
Moreover, there exists a morphism $f:X\longrightarrow C$ with connected fibers onto a smooth curve of genus $\geq 2$ 
if and only if there exists a morphism $h:Y\longrightarrow D$ with connected fibers onto a smooth curve of genus $\geq 2$.\\
\end{cor}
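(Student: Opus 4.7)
The strategy is to combine Theorem \ref{intr3F}(i), which in the threefold setting yields isomorphisms of algebraic sets $V^k(\omega_X)_0\cong V^k(\omega_Y)_0$ for every $k\ge 0$, with the structure theorem of Green--Lazarsfeld (see \cite{GL1}, \cite{GL2}, and the sharp Albanese general type version recalled in \cite{Po}): every positive-dimensional irreducible component of $V^k(\omega_X)_0$ has the form $f^*{\rm Pic}^0(\tilde W)$ for a surjective morphism with connected fibers $f:X\to W$ onto a normal variety of dimension at most $n-k$ (where $n=\ddim X$) whose smooth model $\tilde W$ is of Albanese general type, and conversely every such fibration contributes such a component.

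For the first assertion I would take $k=0$. A fibration $f:X\to W$ of the stated type ($\ddim W\le 2$ with smooth model of Albanese general type) contributes a positive-dimensional irreducible component of $V^0(\omega_X)_0$ properly contained in ${\rm Pic}^0(X)$. Theorem \ref{intr3F}(i) transports this component onto a positive-dimensional irreducible component of $V^0(\omega_Y)_0$, and the converse direction of Green--Lazarsfeld applied to $Y$ (with $\ddim Y=3$) extracts from it a fibration on $Y$ of the desired type. The symmetric argument with $X$ and $Y$ swapped yields the biconditional.

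For the second assertion I would instead use $k=n-1=2$. A fibration $f:X\to C$ onto a smooth curve of genus $g\ge 2$ produces the $g$-dimensional subvariety $f^*J(C)\subset V^2(\omega_X)_0$: by Serre duality and the Leray spectral sequence, for generic $L\in J(C)$,
$$h^2(X,\omega_X\otimes f^*L)=h^1(X,f^*L^{-1})\ge h^1(C,L^{-1})=g-1>0,$$
and the closedness of $V^2(\omega_X)$ propagates the inclusion to all of $f^*J(C)$. Conversely, by Green--Lazarsfeld every positive-dimensional component of $V^2(\omega_X)_0$ arises from a fibration onto a variety of Albanese general type of dimension at most $3-2=1$, which is necessarily a smooth curve of genus $\ge 2$. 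Theorem \ref{intr3F}(i) applied with $k=2$ then bijectively transfers such fibrations between $X$ and $Y$.

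The principal technical point is the availability of the sharpened form of Green--Lazarsfeld in both directions, in particular the Albanese general type condition on the smooth model of the target; granted that, the derived invariance of the pointed support loci supplied by Theorem \ref{intr3F}(i) handles the remaining transfer essentially formally, since the isomorphisms in question are isomorphisms of algebraic sets and hence match positive-dimensional irreducible components.
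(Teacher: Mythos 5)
Your argument for the second assertion is essentially sound in its forward direction: the inclusion $f^*J(C)\subset V^2(\omega_X)_0$ you derive via Serre duality and Leray is equivalent to what the paper gets from Lemma \ref{p2} (Koll\'ar's $R^hf_*\omega_X\cong\omega_W$), and transporting it by Theorem \ref{intr3F}(i) is the right move. But the first assertion, as you argue it, has a genuine gap. A fibration $f:X\to W$ with $\ddim W\le 2$ and smooth model of Albanese general type does \emph{not} in general contribute a positive-dimensional component of $V^0(\omega_X)_0$: take $X=W\times\rp^1$ with $W$ a surface of Albanese general type; then $f_*\omega_X=0$, so $h^0(X,\omega_X\otimes f^*L)=0$ for every $L$, and in fact $V^0(\omega_X)=\emptyset$ (consistently with Lemma \ref{v0neg}, since $\kappa(X)=-\infty$). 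The locus that records such a fibration is $V^{h}(\omega_X)$ with $h$ the fiber dimension, via $R^hf_*\omega_X\cong\omega_W$; this is why the paper works with $V^1(\omega_X)_0$ (surface base) and $V^2(\omega_X)_0$ (curve base), not with $V^0$.

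Relatedly, the ``sharp Albanese general type version'' of Green--Lazarsfeld that you invoke in both directions is stronger than what is available. \cite{GL2} Theorem 0.1 (or \cite{Be2} Corollaire 2.3 for curves) gives that a positive-dimensional component $T$ of $V^k(\omega_Y)$ is \emph{contained in} a translate $\gamma+g^*{\rm Pic}^0(W)$ for an irregular fibration $g:Y\to W$ with $\ddim W\le \ddim Y-k$; it does not give $T=g^*{\rm Pic}^0(W)$, nor that the smooth model of $W$ is of Albanese general type. In the paper that last property is extracted from a dimension count: one first arranges (using the reduction of \cite{PP1}, p.~271) that the base $Z'$ of the pencil on $X$ satisfies $\chi(\omega_{Z'})>0$, so $V^0(\omega_{Z'})={\rm Pic}^0(Z')$ pulls back by Lemma \ref{p2} to a subvariety of $V^1(\omega_X)_0$ resp.\ $V^2(\omega_X)_0$ of dimension $q(Z)\ge 3$ resp.\ $g(Z)\ge 2$; after transporting by Theorem \ref{intr3F}(i), the estimate $q(W)\ge \ddim T$ forces $g(W)\ge 2$ (curve case) or $q(W)\ge 3$ (surface case), and only then does $g$ become a higher irrational pencil. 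Your curve case can be repaired exactly this way, but the $k=0$ route for the first assertion cannot; it needs to be replaced by the $V^1$-argument just described.
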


To conclude we remark that while the approach in this paper relies in 
part on techniques of \cite{PS}, the key new ingredient is their interaction 
with the twisted Hochschild homology, introduced and studied here.
We are hopeful that this general method will find further applications in the future.
\section{Derived invariance of the twisted Hochschild homology}
We aim to prove Theorem \ref{intrHH}.
Its proof is based on a technical lemma extending previous computations carried out by
C\u{a}ld\u{a}raru and Orlov (\emph{cf}. \cite{Cal} Proposition 8.1 and \cite{Or} isomorphism (10)).
Let $X$ and $Y$ be smooth projective varieties defined over an algebraically closed field
and let $p$ and $q$ be the projections from $X\times Y$ onto the first and second factor respectively.
We denote by $\rd(X):=D^b(\mathcal{C}oh(X))$
the bounded derived category of coherent sheaves on $X$.  
We use the same symbol to denote a functor and its associated derived functor.
We say that $X$ and $Y$ are \emph{derived equivalent} if there exists an object
$\sE\in \rd(X\times Y)$
defining an equivalence of derived categories (i.e. an exact linear equivalence of triangulated categories)
$\Phi_{\sE}:\rd(X)\longrightarrow \rd(Y)$ as $\sF\mapsto q_*(p^*\sF\otimes \sE)$.
By \cite{Or} Proposition 2.1.7 and by denoting 
by $p_{rs}$ the projections from $X\times X\times Y\times Y$
onto the $(r,s)$-factor, an equivalence $\Phi_{\sE}:\rd(X)\longrightarrow \rd(Y)$ induces another equivalence
$$\Phi_{\sE^*\boxtimes \sE}:\rd(X\times X)\longrightarrow \rd(Y\times Y)$$
where $\sE^*\stackrel{\rm def}{=}\sH om(\sE,\sO_{X\times Y})\otimes p^*\omega_X[\ddim X]$
and  $$\sE^*\boxtimes \sE\stackrel{\rm def}{=}p^*_{13}\sE^* \otimes p^*_{24}\sE \in \rd(X\times X\times Y\times Y).$$
For automorphisms $\varphi\in {\rm Aut}^0(X)$ and $\psi\in {\rm Aut}^0(Y)$ we define the embeddings
$(1,\varphi):X\hookrightarrow X\times X$, $x\mapsto (x,\varphi(x))$ and $(1,\psi):Y\hookrightarrow Y\times Y$, $y\mapsto (y,\psi(y))$.
Lastly, we denote by $i$ and $j$ the diagonal embeddings of $X$ and $Y$ respectively.

\begin{lemma}\label{image}
Let $\Phi_{\sE}:\rd(X)\longrightarrow \rd(Y)$ be an equivalence of derived categories of smooth projective varieties and $F$ be the induced
Rouquier isomorphism. Let 
$m\in \rz$. If $F(\varphi,L)=(\psi,M)$,  
then $$\Phi_{\sE^*\boxtimes \sE}\big((1,\varphi)_*(\omega_X^{\otimes m} 
\otimes L)\big)\cong(1,\psi)_*(\omega_Y^{\otimes m}\otimes M).$$ 
\end{lemma}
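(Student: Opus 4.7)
The plan is to translate the claimed isomorphism of kernels on $X\times X$ and $Y\times Y$ into an isomorphism of Fourier--Mukai functors on $\rd(X)$ and $\rd(Y)$. A standard fact, underlying Orlov's Proposition 2.1.7, is that $\Phi_{\sE^*\boxtimes\sE}$ implements conjugation by $\Phi_\sE$ at the level of kernels: if $K\in\rd(X\times X)$ is the kernel of an autoequivalence $T_X=\Phi_K$ of $\rd(X)$, then $\Phi_{\sE^*\boxtimes\sE}(K)$ is the kernel of the autoequivalence $\Phi_\sE\circ T_X\circ\Phi_\sE^{-1}$ of $\rd(Y)$. Since all the kernels appearing in the lemma represent autoequivalences, uniqueness of Fourier--Mukai kernels of equivalences reduces the statement to producing a natural isomorphism of functors
$$\Phi_\sE\circ T_X^{\varphi,L,m}\circ\Phi_\sE^{-1}\;\cong\;T_Y^{\psi,M,m},$$
where $T_X^{\varphi,L,m}(\sF):=\varphi_*(\sF\otimes\omega_X^{\otimes m}\otimes L)$ and $T_Y^{\psi,M,m}$ is defined analogously on $Y$.

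The next step is to factor $T_X^{\varphi,L,m}$ into three elementary pieces that commute up to natural isomorphism. Using the projection formula together with the canonical identification $\varphi^*\omega_X\cong\omega_X$, valid for any automorphism of a smooth variety, one can write
$$T_X^{\varphi,L,m}\;\cong\;(-\otimes\omega_X^{\otimes m})\circ\varphi_*\circ(-\otimes L).$$
Each of these three factors is then transported independently to its $Y$-counterpart. The twist by $\omega_X^{\otimes m}$ is carried to the twist by $\omega_Y^{\otimes m}$ by the compatibility of any derived equivalence with Serre functors $S_X=(-\otimes\omega_X)[\ddim X]$: since $\ddim X=\ddim Y$, iterating $\Phi_\sE\circ S_X\cong S_Y\circ\Phi_\sE$ yields
$$\Phi_\sE(\sF\otimes\omega_X^{\otimes m})\;\cong\;\Phi_\sE(\sF)\otimes\omega_Y^{\otimes m}$$
for every $m\in\rz$. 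The composite $\varphi_*\circ(-\otimes L)$, whose kernel is $(1,\varphi)_*L$, is intertwined by $\Phi_\sE$ with $\psi_*\circ(-\otimes M)$ precisely by the defining condition of the Rouquier isomorphism $F(\varphi,L)=(\psi,M)$ recalled in \eqref{rouqkernel}:
$$\Phi_\sE\circ\varphi_*\circ(-\otimes L)\;\cong\;\psi_*\circ(-\otimes M)\circ\Phi_\sE.$$
Concatenating these three intertwinings assembles into the required isomorphism of autoequivalences, and hence of kernels.

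The main obstacle is precise bookkeeping with Rouquier's identification. Its explicit form on $X\times Y$, namely an isomorphism relating $(\varphi\times 1)^*\sE$ and $(1\times\psi)^*\sE$ up to a twist by $p^*L\otimes q^*M^{-1}$, must be reconciled with base change along the graph embeddings $(1,\varphi)$ and $(1,\psi)$ and with the projection formula so that no spurious shift or residual line bundle correction appears. This is precisely where the possible presence of non-trivial automorphisms of $X$ and $Y$ introduces the additional technicalities alluded to in the paper, compared with C\u{a}ld\u{a}raru's and Orlov's automorphism-free treatment. Alternatively, one can bypass the functorial route and argue directly in the spirit of those authors: unfold $\Phi_{\sE^*\boxtimes\sE}\bigl((1,\varphi)_*(\omega_X^{\otimes m}\otimes L)\bigr)$ via flat base change along the closed embedding $(x,y_1,y_2)\mapsto(x,\varphi(x),y_1,y_2)$, apply the projection formula, substitute the Rouquier relation to replace $(\varphi\times 1)^*\sE$ with $(1\times\psi)^*\sE$ up to the required twist, and finally identify the result as $(1,\psi)_*(\omega_Y^{\otimes m}\otimes M)$ by base change along $(1,\psi)$.
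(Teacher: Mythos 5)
Your proposal is correct, but it is organized differently from the paper's argument, which is an explicit kernel computation: the paper unfolds $\Phi_{\sE^*\boxtimes \sE}\big((1,\varphi)_*(\omega_X^{\otimes m}\otimes L)\big)$ by flat base change along $(y_1,x,y_2)\mapsto(x,\varphi(x),y_1,y_2)$ and the projection formula, substitutes $\sE\otimes p^*\omega_X\cong\sE\otimes q^*\omega_Y$ and the Rouquier relation \eqref{rouqkernel} inside the pushforward, factors out $\sigma_2^*(\omega_Y^{\otimes m}\otimes M)$, and only then identifies the residual convolution $t_{13*}\big(t_{12}^*\rho^*\sE^*\otimes t_{23}^*(1\times\psi)_*\sE\big)$ as the kernel of $\psi_*$ via uniqueness of Fourier--Mukai kernels. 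You instead invoke at the outset the general fact that $\Phi_{\sE^*\boxtimes\sE}$ realizes conjugation $\Phi_K\mapsto \Phi_\sE\circ\Phi_K\circ\Phi_\sE^{-1}$ on kernels, which reduces the lemma to an isomorphism of autoequivalences; you then assemble that isomorphism from Serre-functor compatibility (handling $\omega^{\otimes m}$, using $\ddim X=\ddim Y$) and the Rouquier intertwining (handling $\varphi_*(-\otimes L)$ versus $\psi_*(-\otimes M)$), and conclude with one application of kernel uniqueness. This is a clean and valid route, and it has the virtue of isolating exactly where the automorphisms enter (only through \eqref{rouqkernel} and the canonical identifications $\varphi^*\omega_X\cong\omega_X$, $\psi^*\omega_Y\cong\omega_Y$); what it costs is that all the base-change and projection-formula bookkeeping is hidden in the cited conjugation fact, whose proof is essentially the computation the paper carries out, and whose attribution should be sharpened: Orlov's Proposition 2.1.7, as used in the paper, only provides the equivalence $\Phi_{\sE^*\boxtimes\sE}$ together with $\Phi_{\rho^*\sE^*}\cong\Phi_\sE^{-1}$, so the conjugation-on-kernels statement should either be cited to C\u{a}ld\u{a}raru's treatment of the Hochschild structure (or proved by the standard convolution computation on the fourfold product), exactly as your final paragraph concedes. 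With that reference made precise, your argument yields the same conclusion, since the target functor $\sF\mapsto\psi_*(\sF\otimes\omega_Y^{\otimes m}\otimes M)$ is an equivalence with kernel $(1,\psi)_*(\omega_Y^{\otimes m}\otimes M)$, so kernel uniqueness applies.
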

\begin{proof}
We denote by $t_r$ the projection from $Y\times X\times Y$ onto the $r$-th factor and by $t_{rs}$ the projection
onto the $(r,s)$-th factor.
Consider the fiber product diagram

\centerline{ \xymatrix@=32pt{  
& Y\times X\times Y \ar[d]^{t_2}\ar[r]^{\lambda} & X\times X\times Y\times Y \ar[d]^{p_{12}}\\ 
& X\ar[r]^{(1,\varphi)} & X\times X}}
\noindent where $\lambda(y_1,x,y_2)=(x,\varphi(x),y_1,y_2)$.
By base change and the projection formula we get
\begin{eqnarray}\label{fm}
 \Phi_{\sE^*\boxtimes \sE}\big((1,\varphi)_*(\omega_X^{\otimes m}\otimes L)\big) & \cong & 
p_{34*}\big(p_{12}^*(1,\varphi)_*(\omega_X^{\otimes m}\otimes L)
\otimes \big(\sE^*\boxtimes \sE\big) \big)\\
& \cong &p_{34*}\big(\lambda_*t_2^*(\omega_X^{\otimes m}\otimes L)\otimes p_{13}^*\sE^*\otimes p_{24}^*\sE\big) \notag \\
& \cong &p_{34*}\lambda_*\big(t_2^*(\omega_X^{\otimes m}\otimes L)\otimes \lambda^*p_{13}^*\sE^*\otimes \lambda^*p_{24}^*\sE\big) \notag \\
& \cong &t_{13*}\big(t_2^*(\omega_X^{\otimes m}\otimes L)\otimes t_{21}^*\sE^*\otimes t_{23}^*(\varphi\times 1)^*\sE\big). \notag \\ \notag
\end{eqnarray}
By \cite{Or} p. 535, the equivalence $\Phi_{\sE}$ induces an isomorphism of objects 
$\sE\otimes p^*\omega_X\cong \sE\otimes q^*\omega_Y$ and, by \cite{PS} Lemma 3.1, isomorphisms
\begin{equation}\label{rouqkernel}
(\varphi\times 1)^*\sE\otimes p^*L\cong (1\times \psi)_*\sE\otimes q^*M\quad \mbox{whenever}\quad F(\varphi,L)=(\psi,M).
\end{equation}
Therefore
$p^*(\omega_X^{\otimes m}\otimes L)\otimes (\varphi\times 1)^*\sE\cong q^*(\omega_Y^{\otimes m}
\otimes M)\otimes (1\times \psi)_*\sE$. 
By pulling this isomorphism back via $t_{23}:Y\times X\times Y\longrightarrow X\times Y$, 
we have
\begin{equation}\label{fm2}
t_2^*(\omega_X^{\otimes m}\otimes L)\otimes t_{23}^*(\varphi\times 1)^*\sE\cong t_3^*(\omega_Y^{\otimes m}
\otimes M)\otimes t_{23}^*(1\times \psi)_*\sE.
\end{equation}
The morphism $t_3:Y\times X\times Y\longrightarrow Y$ can be rewritten 
as $t_3=\sigma_2\circ t_{13}$
where $\sigma_2:Y\times Y\longrightarrow Y$ is the projection onto the second factor. 
Let $\rho:Y\times X\longrightarrow X\times Y$ be
the inversion morphism $(y,x)\mapsto (x,y)$.
Then by \eqref{fm} and \eqref{fm2}
\begin{eqnarray*}
\Phi_{\sE^*\boxtimes \sE}\big((1,\varphi)_*(\omega_X^{\otimes m}\otimes L)\big) & \cong & 
t_{13*}\big(t_3^*(\omega_Y^{\otimes m}\otimes M)\otimes t_{21}^*\sE^*\otimes t_{23}^*(1\times \psi)_*\sE\big) \\
& \cong & t_{13*}\big(t_{13}^*\sigma_2^*(\omega_X^{\otimes m}\otimes L)\otimes t_{21}^*\sE^*\otimes t_{23}^*(1\times \psi)_*\sE\big) \\
& \cong & \sigma_2^*(\omega_Y^{\otimes m}\otimes M)\otimes t_{13*}\big(t_{21}^*\sE^*\otimes t_{23}^*(1\times \psi)_*\sE\big)\\
& \cong & \sigma_2^*(\omega_Y^{\otimes m}\otimes M)\otimes t_{13*}\big(t_{12}^*\rho^*\sE^*\otimes t_{23}^*(1\times \psi)_*\sE\big).
\end{eqnarray*}
We note that the object $t_{13*}\Big(t_{12}^*\rho^*\sE^*\otimes t_{23}^*(1\times \psi)_*\sE\Big)\in 
\rd(Y\times Y)$ is the 
kernel of the composition $\Phi_{(1\times \psi)_{*\sE}} \circ \Phi_{\rho^*\sE^*}$ (see  
\cite{Or} Proposition 2.1.2).
Furthermore, $\Phi_{(1\times \psi)_{*\sE}}\cong \psi_*\circ \Phi_{\sE}$ and 
$\Phi_{\rho^*\sE^*}\cong \Psi_{\sE^*}$ (where $\Psi_{\sE^*}:\rd (Y)\longrightarrow \rd(X)$
is defined as $\sG\mapsto p_*(q^*\sG\otimes \sE^*)$).
Hence 
$$\Phi_{(1\times \psi)_{*\sE}} \circ \Phi_{\rho^*\sE^*} \cong \psi_* \circ \Phi_{\sE}\circ \Psi_{\sE^*}\cong \psi_*\circ {\rm id}_{\rd(Y)}
\cong \psi_*,$$ which follows since $\Psi_{\sE^*}$ is the left (and right)
adjoint of $\Phi_{\sE}$.
On the other hand, the kernel of the derived functor $\psi_*:\rd(Y)\longrightarrow \rd(Y)$ 
is the structure sheaf of the graph of $\psi$, i.e. $\sO_{\Gamma_{\psi}}\cong(1,\psi)_*\sO_Y$ (see 
\cite{Hu} Example 5.4).
Thus, by the uniqueness of the Fourier-Mukai kernel
we have the isomorphism
$$ t_{13*}\big(t_{12}^*\rho^*\sE^*\otimes t_{23}^*(1\times \psi)_*\sE\big)\cong (1,\psi)_*\sO_Y.$$
To recap
\begin{eqnarray*}
 \Phi_{\sE^*\boxtimes \sE}\big(i_*(\omega_X^{\otimes m}\otimes L)\big) & \cong & 
 \sigma_2^*(\omega_Y^{\otimes m}\otimes M)\otimes (1,\psi)_*\sO_Y\\
& \cong & (1,\psi)_*\big( (1,\psi)^*\sigma_2^*(\omega_Y^{\otimes m} \otimes M)\otimes \sO_Y\big)\\
& \cong & (1,\psi)_*\big(\psi^*(\omega_Y^{\otimes m}\otimes M)\big)\\
& \cong & (1,\psi)_*(\omega_Y^{\otimes m}\otimes M).
\end{eqnarray*}
The last isomorphism follows as 
the action of ${\rm Aut}^0(X)$ on ${\rm Pic}^0(X)$ is trivial (\emph{cf}. \cite{PS} footnote at p. 531).
\end{proof}

\begin{proof}[Proof of Theorem \ref{intrHH}.]
By Lemma \ref{image}, the equivalence 
$\Phi_{\sE^*\boxtimes \sE}$ induces isomorphisms on the graded components of $HH_*(X,\varphi, L,m)$ and $HH_*(Y,\psi,M,m)$
\begin{eqnarray*}
{\rm Ext}^k_{X\times X}\big(i_*\sO_X,(1,\varphi)_*(\omega_X^{\otimes m}\otimes L)\big) & \cong & {\rm Ext}_{Y\times Y}^k
\big(\Phi_{\sE^*\boxtimes \sE}(i_*\sO_X),
\Phi_{\sE^*\boxtimes \sE}\big((1,\varphi)_*(\omega_X^{\otimes m}\otimes L)\big)\big)\\
& \cong & {\rm Ext}_{Y\times Y}^k\big(j_*\sO_Y,(1,\psi)_*(\omega^{\otimes m}_Y\otimes M)\big).
\end{eqnarray*}
Since $\Phi_{\sE^*\boxtimes \sE}$ commutes with the composition of any two morphisms, it follows that in particular 
$\Phi_{\sE^*\boxtimes \sE}$ induces an isomorphism of graded modules. 
\end{proof}

Theorem \ref{intrHH} will be often used in the following weaker form

\begin{cor}\label{formula}
Let $X$ and $Y$ be smooth projective derived equivalent varieties defined over an algebraically closed field of characteristic zero.
If $F(1,L)=(1,M)$, then
for any integers $m$ and $k\geq 0$ there exist isomorphisms
$$\bigoplus_{q=0}^k H^{k-q}\big(X,\Omega_X^{\ddim X-q}\otimes \omega_X^{\otimes m}\otimes L\big)\cong 
\bigoplus_{q=0}^k H^{k-q}\big(Y,\Omega_Y^{\ddim Y-q}\otimes \omega_Y^{\otimes m}\otimes M\big).$$
\end{cor}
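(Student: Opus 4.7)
My plan is to specialize Theorem \ref{intrHH} to the trivial-automorphism case and then rewrite both sides of the resulting Ext-isomorphism as direct sums of sheaf cohomology groups by means of the Hochschild--Kostant--Rosenberg (HKR) isomorphism; this is precisely where the characteristic-zero hypothesis is used.

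First, since $F(1,L)=(1,M)$ we are entitled to take $\varphi=\psi=1$ in Theorem \ref{intrHH}. The embeddings $(1,\varphi)$ and $(1,\psi)$ then collapse to the diagonals $i$ and $j$, so for every integer $m'$ and every $k\ge 0$ we obtain
\begin{equation*}
\mathrm{Ext}^k_{X\times X}\bigl(i_*\sO_X,\,i_*(\omega_X^{\otimes m'}\otimes L)\bigr)\;\cong\;\mathrm{Ext}^k_{Y\times Y}\bigl(j_*\sO_Y,\,j_*(\omega_Y^{\otimes m'}\otimes M)\bigr)
\end{equation*}
as the degree-$k$ piece of the graded isomorphism $HH_*(X,1,L,m')\cong HH_*(Y,1,M,m')$.

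Next I analyse each side in isolation. Using the $(Li^*,i_*)$ adjunction and the HKR decomposition $Li^*i_*\sO_X\cong \bigoplus_{q=0}^{\ddim X}\Omega_X^q[q]$ (which is where characteristic zero enters), the Ext-group rewrites as $\bigoplus_q\mathrm{Ext}^{k-q}_X(\Omega_X^q,\omega_X^{\otimes m'}\otimes L)$. Since $\Omega_X^q$ is locally free, this equals $\bigoplus_q H^{k-q}\bigl(X,(\Omega_X^q)^{\vee}\otimes \omega_X^{\otimes m'}\otimes L\bigr)$; and the perfect pairing $\Omega_X^q\otimes \Omega_X^{\ddim X-q}\to \omega_X$ gives the standard identification $(\Omega_X^q)^{\vee}\cong \Omega_X^{\ddim X-q}\otimes \omega_X^{-1}$, so the summand becomes
\begin{equation*}
H^{k-q}\bigl(X,\,\Omega_X^{\ddim X-q}\otimes \omega_X^{\otimes (m'-1)}\otimes L\bigr).
\end{equation*}
Setting $m=m'-1$ (an invertible reparametrization of $\rz$), and noting that summands with $q>k$ or $q>\ddim X$ vanish trivially so that the sum is effectively indexed by $0\le q\le k$, we recover the left-hand side of the corollary. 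Performing the identical manipulation on $Y$ (using that $\ddim X=\ddim Y$ is a derived invariant) produces the right-hand side, and matching them through the displayed Ext-isomorphism concludes the proof.

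I do not anticipate any serious obstacle: the whole argument is a combination of Theorem \ref{intrHH}, the adjunction $(Li^*,i_*)$, and the HKR isomorphism, together with the duality $(\Omega_X^q)^{\vee}\cong \Omega_X^{\ddim X-q}\otimes \omega_X^{-1}$. The only delicate point is the bookkeeping of the shift in the power of $\omega_X$ created by that duality, which is why one must apply Theorem \ref{intrHH} with the parameter $m'=m+1$ rather than with $m$ itself.
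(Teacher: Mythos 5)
Your proposal is correct and takes essentially the same route as the paper: the paper deduces the corollary from Theorem \ref{intrHH} (with $\varphi=\psi=1$) together with the decomposition ${\rm Ext}^k_{X\times X}\big(i_*\sO_X,i_*\sF\big)\cong \bigoplus_{q=0}^k H^{k-q}\big(X,\Omega_X^{\ddim X-q}\otimes \omega_X^{-1}\otimes \sF\big)$, which it simply cites from Yekutieli and Swan, whereas you rederive that decomposition via the $(Li^*,i_*)$ adjunction, the HKR isomorphism (the only place characteristic zero is used), and $(\Omega_X^q)^{\vee}\cong \Omega_X^{\ddim X-q}\otimes \omega_X^{-1}$. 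Your explicit reparametrization $m'=m+1$ is the same harmless relabeling implicit in the paper's ``follows immediately.''
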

\begin{proof}
 This follows immediately by Theorem \ref{intrHH} and by the general fact that the groups ${\rm Ext}^k_{X\times X}\big(i_*\sO_X,i_*\sF\big)$
decompose as $\bigoplus_{q=0}^k H^{k-q}\big(X,\Omega_X^{\ddim X-q}\otimes \omega_X^{-1}\otimes \sF\big)$ for any coherent sheaf $\sF$ and for all
$k\geq 0$
(see \cite {Ye} Corollary 4.7 and \cite{Sw} Corollary 2.6).
\end{proof}
%

\section{Behavior of cohomological support loci under derived equivalence}
In this section we study the behavior of cohomological support loci under derived equivalence.  
Applications of our analysis are given in \S 7. 
From now on we work over the field of the complex numbers.
\subsection{Cohomological support loci}
Let $X$ be a complex smooth projective irregular variety.
For a coherent sheaf $\sF$ on $X$ and integers $k\geq 0,r\geq 1$, we define the \emph{cohomological support loci of} $\sF$ as
$$V_r^k(\sF):=\{L \in {\rm Pic}^0(X)\; | \; h^k(X,\sF \otimes L)\geq r \}.$$ By semicontinuity 
these loci are algebraic closed sets in ${\rm Pic}^0(X)$. We set $V^k(\sF):=V^k_1(\sF)$ and 
we denote by $V^k_r(\sF)_0$ the union of the irreducible components of $V^k_r(\sF)$ passing through the origin.
By following the work of Pareschi and Popa \cite{PP2}, we say that
a coherent sheaf $\sF$ is a \emph{GV-sheaf} if 
$$\codim_{{\rm Pic}^0(X)} V^k(\sF)\geq k\quad \mbox{ for any }\quad k>0.$$
We denote by ${\rm alb}_X:X\longrightarrow {\rm Alb}(X)$ the Albanese map of $X$ and we recall the formula
(\emph{cf}. \cite{Po} p. 7)
\begin{equation}\label{foralb}
 \ddim {\rm alb}_X(X)={\rm min}\big\{\ddim X,{\rm min}_{k=0,\ldots ,\ddim X}\{\ddim X-k+\codim V^k(\omega_X)_0\}\big\}.
\end{equation}
Moreover, if $\ddim {\rm alb}_X(X)=\ddim X-k$, then there is a series of inclusions (\emph{cf}. 
\cite{PP2} Proposition 3.14 and \cite{GL1} Theorem 1; see also \cite{EL} Lemma 1.8)
\begin{equation}\label{forinc}
V^k(\omega_X)\supset V^{k+1}(\omega_X)\supset \ldots \supset V^d(\omega_X)=\{\sO_X\}.
\end{equation}

\subsection{Derived invariance of the zero-th cohomological support locus}
The following proposition proves and extends Theorem \ref{intrV} (i).
\begin{prop}\label{hzero}
Let $X$ and $Y$ be smooth projective derived equivalent varieties and let $F$ be the induced Rouquier isomorphism.
Let $m,r\in \rz$ with $r\geq 1$. If $L\in V^0_r(\omega_X^{\otimes m})$ and $F(1,L)=(\psi,M)$, then $\psi=1$ and $M\in V^0_r(\omega_Y^{\otimes m})$.
Moreover, $F$ induces an isomorphism of algebraic sets 
$$V^0_r(\omega_X^{\otimes m})\cong V^0_r(\omega_Y^{\otimes m}).$$ 
\end{prop}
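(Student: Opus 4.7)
The plan is to apply Theorem \ref{intrHH} to the triple $(1,L,m)$ and to extract the statement from the degree-zero graded piece of the resulting isomorphism of $HH^{*}$-modules. Writing $F(1,L)=(\psi,M)$, the theorem gives in particular a vector-space isomorphism
$$\mathrm{Ext}^{0}_{X\times X}\bigl(i_{*}\sO_{X},\,i_{*}(\omega_{X}^{\otimes m}\otimes L)\bigr)\;\cong\;\mathrm{Ext}^{0}_{Y\times Y}\bigl(j_{*}\sO_{Y},\,(1,\psi)_{*}(\omega_{Y}^{\otimes m}\otimes M)\bigr),$$
and the whole argument will come from identifying the two sides.

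Next I would compute both sides via derived adjunction for the respective closed immersions. On the $X$-side this is the bottom ($k=0$) piece of the HKR decomposition used in the proof of Corollary \ref{formula}, and it gives $H^{0}(X,\omega_{X}^{\otimes m}\otimes L)$; hence the hypothesis $L\in V^{0}_{r}(\omega_{X}^{\otimes m})$ forces the left-hand side to have dimension at least $r\geq 1$. On the $Y$-side the same adjunction rewrites the group as $\mathrm{Hom}_{Y}\bigl((1,\psi)^{*}j_{*}\sO_{Y},\,\omega_{Y}^{\otimes m}\otimes M\bigr)$, and since the scheme-theoretic intersection $\Delta_{Y}\cap\Gamma_{\psi}$ is precisely the fixed locus $\mathrm{Fix}(\psi)$ of $\psi$, one has $(1,\psi)^{*}j_{*}\sO_{Y}=\sO_{\mathrm{Fix}(\psi)}$. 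I expect the central technical step to be deducing that non-vanishing of this $\mathrm{Hom}$ forces $\psi=1$: the point is that $\sO_{\mathrm{Fix}(\psi)}=\sO_{Y}/\mathcal{I}_{\mathrm{Fix}(\psi)}$ is a quotient, and any morphism from such a quotient into the torsion-free line bundle $\omega_{Y}^{\otimes m}\otimes M$ on the irreducible smooth variety $Y$ must vanish unless the ideal is zero, since a section annihilated by a nontrivial ideal sheaf must vanish on a nonempty open set. Thus $\psi\neq 1$ contradicts $r\geq 1$, forcing $\psi=1$, after which the right-hand side reduces to $H^{0}(Y,\omega_{Y}^{\otimes m}\otimes M)$ and we conclude $M\in V^{0}_{r}(\omega_{Y}^{\otimes m})$.

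Finally, for the isomorphism of algebraic sets, the preceding paragraph shows that $L\mapsto\mathrm{pr}_{2}F(1,L)$ is a well-defined morphism $V^{0}_{r}(\omega_{X}^{\otimes m})\to V^{0}_{r}(\omega_{Y}^{\otimes m})$; running the same argument with the inverse equivalence $\Phi^{-1}$ (whose Rouquier isomorphism is $F^{-1}$) produces the inverse morphism, so $F$ restricts to the claimed isomorphism.
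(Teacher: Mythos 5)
Your proposal is correct and follows essentially the same route as the paper: apply Theorem \ref{intrHH} to $(1,L,m)$, identify the degree-zero piece on the $X$-side with $H^{0}(X,\omega_X^{\otimes m}\otimes L)$ and on the $Y$-side, via adjunction, with $\mathrm{Hom}_Y\bigl((1,\psi)^{*}j_*\sO_Y,\omega_Y^{\otimes m}\otimes M\bigr)$, then use that this object is supported on the fixed locus of $\psi$ to force $\psi=1$, and conclude the set-theoretic isomorphism by running the same argument for $F^{-1}$. Your elaboration of why the $\mathrm{Hom}$ vanishes when $\psi\neq 1$ just fills in the detail the paper leaves implicit.
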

\begin{proof}
Let $L\in V^0_r(\omega_X^{\otimes m})$.
By Theorem \ref{intrHH} and the adjunction formula we have
\begin{eqnarray*}
r\leq h^0(X,\omega_X^{\otimes m}\otimes L) & = & \ddim {\rm Hom}_{X\times X}\big(i_*\sO_X,i_*(\omega_X^{\otimes m}\otimes L)\big) \\ 
& = & \ddim {\rm Hom}_{Y\times Y}\big(j_*{\sO_Y},(1,\psi)_*(\omega_Y^{\otimes m}\otimes M)\big) \\
& = & \ddim {\rm Hom}_Y\big((1,\psi)^*j_*\sO_Y,\omega_Y^{\otimes m}\otimes M\big).
\end{eqnarray*}
Since $(1,\psi)^*j_*\sO_Y$ is supported on the locus of fixed points of $\psi$, we have $\psi=1$ and $M\in V^0_r(\omega_Y)$. 
Thus, $F$ maps $1\times V^0_r(\omega_X^{\otimes m})\mapsto 1\times V^0_r(\omega_Y^{\otimes m})$. In the same way
$F^{-1}$ maps $1\times V^0_r(\omega_Y^{\otimes m})\mapsto 1\times V^0_r(\omega_X^{\otimes m})$ yielding the wanted isomorphisms. 
%
%
\end{proof}

\subsection{Behavior of the higher cohomological support loci}
In this section we establish the isomorphism $V^1(\omega_X)_0\cong V^1(\omega_Y)_0$ of Theorem \ref{intrV}. 
It turns out that, by using the same techniques (i.e. invariance of the twisted Hochschild homology and Brion's results
on actions of non-affine groups), 
one can show a more general result which will be often used in the study of the invariance of 
cohomological support loci of bundles of holomorphic $p$-forms. 
\begin{theorem}\label{v1origin}
Let $X$ and $Y$ be smooth projective derived equivalent varieties
of dimension $d$ and let $F$ be the induced Rouquier isomorphism. Let $m\in \rz$. 
If $L\in \bigcup_{p,q\geq 0} V^p\big(\Omega_X^q\otimes\omega_X^{\otimes m}\big)_0$ and $F(1,L)=(\psi,M)$,  
then $\psi=1$ and $M\in \bigcup_{p,q\geq 0} V^p\big(\Omega_Y^q\otimes\omega_Y^{\otimes m}\big)_0$.
Moreover, $F$ induces isomorphisms of algebraic sets
$$\bigcup_{q=0}^k V^{k-q}\big(\Omega_X^{d-q}\otimes \omega_X^{\otimes m}\big)_0
\cong \bigcup_{q=0}^k V^{k-q}(\Omega_Y^{d-q}\otimes \omega_Y^{\otimes m}\big)_0\quad \mbox{ for any }\quad k\geq 0.$$
\end{theorem}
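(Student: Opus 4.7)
The plan is to adapt the strategy of Proposition \ref{hzero} by combining Theorem \ref{intrHH} with the Hochschild--Kostant--Rosenberg decomposition used in Corollary \ref{formula} and Brion's structural results on actions of non-affine algebraic groups (already invoked in the introduction). Non-vanishing of a single Hodge-type summand on $X$ produces an Ext class on $Y$ whose support-theoretic properties, read through Brion's theorem, force the automorphism component $\psi$ to be trivial; once $\psi=1$ is known, Corollary \ref{formula} transports whole HKR-diagonal packets across the equivalence.

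First, fix $L\in V^p(\Omega_X^q\otimes\omega_X^{\otimes m})_0$, so $H^p(X,\Omega_X^q\otimes\omega_X^{\otimes m}\otimes L)\neq 0$, and write $F(1,L)=(\psi,M)$. Applied to $\sF=\omega_X^{\otimes m+1}\otimes L$ with $k=p+d-q$, the HKR decomposition recalled in the proof of Corollary \ref{formula} exhibits this cohomology group as a direct summand of $HH_k(X,1,L,m+1)={\rm Ext}^k_{X\times X}(i_*\sO_X,i_*\sF)$, which is therefore non-zero. By Theorem \ref{intrHH}, the corresponding group ${\rm Ext}^k_{Y\times Y}(j_*\sO_Y,(1,\psi)_*(\omega_Y^{\otimes m+1}\otimes M))$ is also non-zero, and closed-immersion adjunction identifies it with ${\rm Ext}^k_Y((1,\psi)^*j_*\sO_Y,\omega_Y^{\otimes m+1}\otimes M)$. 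Since $(1,\psi)^*j_*\sO_Y$ is supported on the scheme-theoretic fixed locus of $\psi$, that locus must be non-empty.

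To upgrade this to $\psi=1$, I would consider the composition of algebraic group morphisms
\begin{equation*}
\alpha:{\rm Pic}^0(X)\xrightarrow{(1,\cdot)} {\rm Aut}^0(X)\times{\rm Pic}^0(X)\xrightarrow{F} {\rm Aut}^0(Y)\times{\rm Pic}^0(Y)\xrightarrow{{\rm pr}_1} {\rm Aut}^0(Y).
\end{equation*}
Its image is a connected complete subgroup of ${\rm Aut}^0(Y)$, hence an abelian subvariety lying inside the anti-affine radical of ${\rm Aut}^0(Y)$. By Brion's structural theorem on actions of non-affine groups, this radical acts freely on the smooth projective variety $Y$, so $\psi=\alpha(L)$ having a fixed point forces $\psi=1$. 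Once $\psi=1$ is known, Corollary \ref{formula} directly produces the graded isomorphism $\bigoplus_q H^{k-q}(X,\Omega_X^{d-q}\otimes\omega_X^{\otimes m}\otimes L)\cong \bigoplus_q H^{k-q}(Y,\Omega_Y^{d-q}\otimes\omega_Y^{\otimes m}\otimes M)$, from which the claimed identification of unions of cohomological support loci follows. Restriction to origin components is automatic: the correspondence $L\mapsto M$ is induced by a morphism of algebraic groups sending $\sO_X$ to $\sO_Y$, so it carries irreducible components through the identity to irreducible components through the identity.

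The hard part is the Brion step: pinning down that $\alpha({\rm Pic}^0(X))$ really sits inside the part of ${\rm Aut}^0(Y)$ where the free-action statement applies, and citing Brion's theorem in the exact form needed. This is what genuinely separates the higher-$p$ case from the ${\rm Hom}$-only argument of Proposition \ref{hzero}, where the torsion of $(1,\psi)^*j_*\sO_Y$ already suffices to kill all maps to a line bundle. A secondary technicality, that only the support of the derived pullback $(1,\psi)^*j_*\sO_Y$ enters the argument, is handled by a standard support-of-Ext lemma.
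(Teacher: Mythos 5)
Your reduction to non-vanishing of ${\rm Ext}^k_Y\big((1,\psi)^*j_*\sO_Y,\omega_Y^{\otimes m+1}\otimes M\big)$ is fine, but the step you yourself flag as the hard part is where the argument genuinely fails: it is not true that the abelian subvariety $\beta({\rm Pic}^0(X))\subset{\rm Aut}^0(Y)$ (equivalently, the anti-affine radical containing it) acts freely on $Y$, so ``the fixed locus of $\psi$ is non-empty'' does not give $\psi=1$. A concrete counterexample: let $E'$ be an elliptic curve, $L\in{\rm Pic}^0(E')$ of order $2$, and $Y=\rp(\sO_{E'}\oplus L)\cong (E\times\rp^1)/(\rz/2)$, where $E\to E'$ is the degree-$2$ \'etale cover trivializing $L$ and the involution acts by $(x,t)\mapsto(x+\epsilon,-t)$. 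Then $E$ embeds in ${\rm Aut}^0(Y)$ (it is an abelian, hence anti-affine, subvariety), yet the $2$-torsion element $\epsilon$ acts non-trivially with fixed locus equal to two sections. What Brion's structure theorem actually gives (this is how it is used in the paper, following \cite{PS} Lemma 2.4) is $Y\cong B\times^{H}Z$ for a finite subgroup $H\subset B$, so elements of $B$ with fixed points lie in $H$ but need not be trivial. Note also that your argument, as written, never uses that $L$ lies on a component through the origin; if it were correct it would prove the isomorphism of the \emph{full} loci $V^p(\Omega_X^q\otimes\omega_X^{\otimes m})\cong V^p(\Omega_Y^q\otimes\omega_Y^{\otimes m})$ unconditionally, i.e. considerably more than Theorem \ref{intrV} and essentially Conjecture \ref{intrP}-type statements the paper explicitly cannot reach --- a strong sign the pointwise freeness claim cannot hold.

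The missing idea is precisely where the subscript $0$ enters. Your $Y$-side strategy can in fact be repaired: every $L'$ in an irreducible component $V\ni\sO_X$ of the support locus has $\beta(L')$ with non-empty fixed locus, hence $\beta(V)\subset H$ is finite; since $V$ is irreducible, contains $\sO_X$, and $\beta$ is a morphism of algebraic groups, $\beta(V)=\{1\}$, giving $\psi=1$ on all of $V$. With that replacement the rest of your argument (Corollary \ref{formula} to transport the HKR packets, symmetry via $F^{-1}$, origin components to origin components) goes through. For comparison, the paper argues on the $X$-side instead: using the Brion fibration $A\times Z\to X$ it shows via K\"unneth and \cite{La} Lemma 4.1.14 that the origin components lie in $(\ker f_1^*)_0$ for the orbit map $f_1:A\to X$, and then quotes \cite{PS} p. 533 for the implication $L\in(\ker f_1^*)_0\Rightarrow F(1,L)=(1,M)$; it never needs the support/adjunction analysis of higher Ext groups that your route requires.
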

\begin{proof}
Before starting the proof, we recall some notation and facts from \cite{PS} Theorem A.
Let $\alpha:{\rm Pic}^0(Y)\longrightarrow {\rm Aut}^0(X)$ and $\beta:{\rm Pic}^0(X)\longrightarrow {\rm Aut}^0(Y)$ be morphisms defined
as
$$\alpha(M)={\rm pr}_1 (F^{-1}(1,M))\quad \mbox{ and }\quad \beta(L)={\rm pr}_1(F(1,L))$$ 
(here ${\rm pr}_1$ denotes the projection onto the first factor from the product ${\rm Aut}^0(\cdot)\times {\rm Pic}^0(\cdot)$).
We denote by $A$ and $B$ the images of $\alpha$ and $\beta$ respectively. We recall that $A$ and $B$ are isogenous abelian varieties.
  
If $A$ is trivial, then $F(1,{\rm Pic}^0(X))=(1,{\rm Pic}^0(Y))$. Hence, by Corollary \ref{formula}
$$F\big(1,\bigcup_q V^{k-q}\big(\Omega_X^{d-q}
\otimes \omega_X^{\otimes m}\big)\big)\subset 
\big(1,\bigcup_q V^{k-q}\big(\Omega_Y^{d-q}
\otimes \omega_Y^{\otimes m}\big)\big)\quad \mbox{ for any }\quad k\geq 0.$$
%
Since $B$ is trivial as well, we have 
$$F^{-1}\big(1, \bigcup_q V^{k-q}\big(\Omega_Y^{d-q}\otimes \omega_Y^{\otimes m}\big)\big)\subset \big(1, \bigcup_q V^{k-q}\big(\Omega_X^{d-q}
\otimes \omega_X^{\otimes m}\big)\big)\quad \mbox{ for any }\quad k\geq 0.$$ 

We suppose now that both $A$ and $B$ are non-trivial. We first show  
\begin{claim}\label{claimetale}
 For any $k\geq 0$, $F\big(1,\bigcup_q V^{k-q}\big(\Omega_X^{d-q}\otimes \omega_X^{\otimes m}\big)_0\big)\subset \big(1,{\rm Pic}^0(Y)\big)$.
\end{claim}
\begin{proof}
Since $A$ is positive-dimensional, the compact part of ${\rm Aut}^0(X)$ (i.e. ${\rm Alb}({\rm Aut}^0(X))$) 
is positive-dimensional as well. Thus Brion's results on actions of non-affine algebraic groups
imply that $X$ is an \'{e}tale locally trivial fibration $\xi:X\longrightarrow A/H$ 
where $H$ is a finite subgroup of $A$ (the proof of this fact is analogous to the one of \cite{PS} Lemma 2.4; see also \cite{Br}).
Let $Z$ be the smooth and connected fiber of $\xi$ over the origin of $A/H$.
Via base change we get a commutative diagram

\centerline{ \xymatrix@=32pt{  
& A\times Z\ar[d] \ar[r]^g & X\ar[d]^{\xi}\\ 
& A\ar[r] & A/H\\}} 
\noindent where $g(\varphi,z)=\varphi(z)$.
Let $(z_0,y_0)\in Z\times Y$ be an arbitrary point and let 
$$f=(f_1\times f_2):A\times B\longrightarrow X\times Y$$ be the orbit map $(\varphi,\psi)\mapsto (\varphi(z_0),\psi(y_0))$. 
The pull-back morphism $f^*=(f^*_1\times f_2^*):{\rm Pic}^0(X)\times {\rm Pic}^0(Y)\longrightarrow \widehat{A}\times 
\widehat{B}$ has finite kernel by a theorem of Nishi (see the first line of \cite{PS} at p. 533 and \cite{Ma} Theorem 2). 
In \cite{PS} p. 533 it is also shown that 
$$L\in ({\rm ker}\,f_1^*)_0\quad \Longrightarrow \quad F(1,L)=(1,M)\quad \mbox{ for some }\quad M\in {\rm Pic}^0(Y)$$
(here $({\rm ker}\,f_1^*)_0$ denotes the neutral component of ${\rm ker}\,f_1^*$).
So it is enough to show the inclusion
\begin{equation}\label{eqv}
\bigcup_q V^{k-q}\big(\Omega_X^{d-q}\otimes \omega_X^{\otimes m}\big)_0\subset ({\rm ker}\,f_1^*)_0\quad \mbox{ for any }\quad k\geq 0.
\end{equation}
This is achieved by computing cohomology spaces on $A\times Z$ via the \'{e}tale morphism 
$g$ and by using the fact that these computations are 
straightforward on $A$.
Let $p_1,p_2$ be the projections from the product $A\times Z$ onto the first and second factor respectively.
By denoting by $\nu:A\times \{z_0\}\hookrightarrow A\times Z$ the inclusion morphism, we have $g\circ \nu=f_1$. 
Moreover, via the isomorphism ${\rm Pic}^0(A\times Z)\cong {\rm Pic}^0(A)\times {\rm Pic}^0(Z)$, we obtain
$g^*L\cong p_1^*L_1\otimes p_2^*L_2$ where $L_1\in {\rm Pic}^0(A)$ and $L_2\in {\rm Pic}^0(Z)$.
Note also that $f_1^*L\cong \nu^*g^*L\cong L_1$.
Finally, if $L\in \bigcup_q V^{k-q}\big(\Omega_X^{d-q}\otimes \omega_X^{\otimes m}\big)$, then by 
\cite{La} Lemma 4.1.14
\begin{equation}\label{eq1}
0\neq \bigoplus_{q=0}^k H^{k-q}\big(X,\Omega_X^{d-q}\otimes \omega_X^{\otimes m}\otimes L\big)\subset 
\bigoplus_{q=0}^k H^{k-q}\big(A\times Z,\Omega_{A\times Z}^{d-q}\otimes \omega^{\otimes m}_{A\times Z}\otimes g^*L\big).
\end{equation}
By K\"{u}nneth's formula, the sum on the right hand side of \eqref{eq1} 
is non-zero only if $f_1^*L\cong \sO_A$, i.e. only if $L \in {\rm ker}\,f_1^*$. Therefore \eqref{eqv} follows.
\end{proof}

By Claim \ref{claimetale} and  
Corollary \ref{formula} we obtain that for any $k\geq 0$
the Rouquier isomorphism maps  
$$1\times \bigcup_q V^{k-q}\big(\Omega_X^{d-q}\otimes \omega_X^{\otimes m}\big)_0 \mapsto 
1\times \bigcup_q V^{k-q}\big(\Omega_Y^{d-q}\otimes \omega_Y^{\otimes m}\big)_0.$$
In complete analogy with Claim \ref{claimetale}, one can show 
$$M\in \bigcup_q V^{k-q}\big(\Omega_Y^{d-q}\otimes \omega_Y^{\otimes m}\big)_0 \quad \Longrightarrow \quad F^{-1}(1,M)=(1,L) 
 \mbox{ for some }  L\in {\rm Pic}^0(X).$$
So, by Corollary \ref{formula}, for any $k\geq 0$, $F^{-1}$ maps $$1\times \bigcup_q V^{k-q}\big(\Omega_Y^{d-q}\otimes \omega_Y^{\otimes m}\big)_0 
\mapsto 
1\times \bigcup_q V^{k-q}\big(\Omega_X^{d-q}\otimes \omega_X^{\otimes m}\big)_0.$$
\end{proof}
The following corollaries yield the proof of Theorem \ref{intrV} (iii) and (ii).
\begin{cor}\label{V1}
 Under the assumptions of Theorem \ref{v1origin}, the Rouquier isomorphism induces isomorphisms of algebraic sets
$$V^1_r(\omega_X)_0\cong V^1_r(\omega_Y)_0\quad \mbox{ for any }\quad r\geq 1.$$
\end{cor}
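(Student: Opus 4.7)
The plan is to extract the $V^1_r(\omega_X)_0$ piece from the union that appears in Theorem \ref{v1origin}: the extra summand present there, namely $V^0(\Omega_X^{d-1})_0$, will turn out to coincide with $V^1(\omega_X)_0$ via Hodge symmetry, so that the sum in Corollary \ref{formula} collapses to a single term.

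First I would take $L \in V^1_r(\omega_X)_0$. Since $V^1_r(\omega_X)_0 \subseteq V^1(\omega_X)_0$ and the latter is contained in the union $V^1(\omega_X)_0 \cup V^0(\Omega_X^{d-1})_0$ appearing in Theorem \ref{v1origin} for $k=1$, $m=0$, the Rouquier isomorphism gives $F(1,L)=(1,M)$ with $M$ in the analogous union on $Y$. This reduces the problem to a pointwise comparison of the individual cohomology dimensions, rather than just their unions of vanishing loci.

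The key analytic input I would invoke is Hodge theory with unitary coefficients. Over $\mathbf{C}$, every $L \in {\rm Pic}^0(X)$ is isomorphic to a unitary flat line bundle, so the Hodge decomposition and Hodge symmetry hold for $L$-twisted cohomology: $h^q(X,\Omega_X^p \otimes L^{-1}) = h^p(X,\Omega_X^q \otimes L)$. Combining this with Serre duality and the identification $(\Omega_X^{d-1})^{\vee}\otimes \omega_X \cong \Omega_X^1$ yields
$$h^0(X, \Omega_X^{d-1}\otimes L) \;=\; h^d(X,\Omega_X^1\otimes L^{-1}) \;=\; h^1(X,\omega_X\otimes L),$$
and the analogous identity on $Y$.

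Finally I would apply Corollary \ref{formula} with $k=1$, $m=0$ to the pair $(L,M)$, obtaining
$$h^1(X,\omega_X\otimes L) + h^0(X,\Omega_X^{d-1}\otimes L) \;=\; h^1(Y,\omega_Y\otimes M) + h^0(Y,\Omega_Y^{d-1}\otimes M).$$
The Hodge identity collapses both sides, giving $2h^1(X,\omega_X\otimes L) = 2h^1(Y,\omega_Y\otimes M)$, hence $h^1(X,\omega_X\otimes L) = h^1(Y,\omega_Y\otimes M)$. Therefore $L \in V^1_r(\omega_X)_0$ if and only if $M \in V^1_r(\omega_Y)_0$, and the reverse implication is symmetric. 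Since $F$ is an isomorphism of algebraic groups, this yields the claimed isomorphism of algebraic sets. The only potentially subtle point is the use of Hodge symmetry for line bundles in ${\rm Pic}^0(X)$, which rests on the classical identification of ${\rm Pic}^0(X)$ with unitary characters of $\pi_1(X)$ and the Hodge decomposition for unitary flat bundles on compact K\"{a}hler manifolds.
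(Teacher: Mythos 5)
Your proposal is correct and follows essentially the same route as the paper: apply Theorem \ref{v1origin} to get $F(1,L)=(1,M)$, invoke Corollary \ref{formula} with $k=1$, $m=0$, and collapse the two summands via Serre duality together with the Hodge linear-conjugate isomorphism, i.e. $h^0(X,\Omega_X^{d-1}\otimes L)=h^1(X,\omega_X\otimes L)$ and likewise on $Y$, giving $h^1(X,\omega_X\otimes L)=h^1(Y,\omega_Y\otimes M)\geq r$. The only difference is that you make explicit the unitary-flat-bundle justification of Hodge symmetry for $L\in{\rm Pic}^0(X)$, which the paper leaves implicit.
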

\begin{proof}
Let $L\in V^1_r(\omega_X)_0$. By Theorem \ref{v1origin}, 
$F(1,L)=(1,M)$ for some $M\in {\rm Pic}^0(Y)$. By Corollary \ref{formula} there is an isomorphism
\begin{equation*}\label{aux}
H^1(X,\omega_X\otimes L)\oplus H^0(X,\Omega_X^{d-1}\otimes L)\cong H^1(Y,\omega_Y\otimes M)\oplus H^0(Y,\Omega_Y^{d-1}\otimes M).
\end{equation*}
By Serre duality and the Hodge linear-conjugate isomorphism we have
$$h^0(X,\Omega_X^{d-1}\otimes L)=h^1(X,\omega_X\otimes L)\quad \mbox{and}\quad h^0(Y,\Omega_Y^{d-1}\otimes M)=h^1(Y,\omega_Y\otimes M).$$
Hence 
$h^1(X,\omega_X\otimes L)=h^1(Y,\omega_Y\otimes M)\geq r$ and $F$ induces then the wanted isomorphisms. 
\end{proof}

\begin{cor}\label{inter}
Under the assumptions of Theorem \ref{v1origin}, and for any integers $l,m,r,s$ with $r,s\geq 1$, the Rouquier isomorphism
induces isomorphisms of algebraic sets
\begin{gather*}
V^0_r(\omega_X^{\otimes m})\cap \Big(\bigcup_q V^{k-q}\big(\Omega_X^{d-q}\otimes \omega_X^{\otimes l}\big)\Big)\cong 
V^0_r(\omega_Y^{\otimes m})\cap \Big(\bigcup_q V^{k-q}\big(\Omega_Y^{d-q}\otimes \omega_Y^{\otimes l}\big)\Big)\\
V^0_r(\omega_X^{\otimes m})\cap V^1_s(\omega_X)\cong V^0_r(\omega_Y^{\otimes m})\cap V^1_s(\omega_Y).
\end{gather*}
\end{cor}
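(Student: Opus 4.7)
The plan is to observe that both isomorphisms reduce, via Proposition \ref{hzero}, to the situation already handled by Corollary \ref{formula} and the Serre duality argument of Corollary \ref{V1}. The key point is that no Brion-type analysis is needed here: membership in $V^0_r(\omega_X^{\otimes m})$ already forces the Rouquier image to lie in $1\times {\rm Pic}^0(Y)$, so the machinery of Theorem \ref{v1origin} is not required.

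For the first isomorphism, I would pick $L\in V^0_r(\omega_X^{\otimes m})\cap \bigcup_q V^{k-q}(\Omega_X^{d-q}\otimes \omega_X^{\otimes l})$ and write $F(1,L)=(\psi,M)$. Since $L\in V^0_r(\omega_X^{\otimes m})$, Proposition \ref{hzero} gives $\psi=1$ and $M\in V^0_r(\omega_Y^{\otimes m})$. The hypothesis of Corollary \ref{formula} is therefore satisfied, and applied with twist $l$ at level $k$ it yields
$$\bigoplus_{q=0}^k H^{k-q}\big(X,\Omega_X^{d-q}\otimes\omega_X^{\otimes l}\otimes L\big)\cong \bigoplus_{q=0}^k H^{k-q}\big(Y,\Omega_Y^{d-q}\otimes\omega_Y^{\otimes l}\otimes M\big).$$
Since the left-hand side is non-zero by assumption on $L$, so is the right-hand side, placing $M$ in $\bigcup_q V^{k-q}(\Omega_Y^{d-q}\otimes\omega_Y^{\otimes l})$. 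The reverse inclusion is obtained by running the same argument for the quasi-inverse equivalence, whose Rouquier isomorphism is $F^{-1}$.

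For the second isomorphism the proof is identical up to the step where one must separate the two summands appearing at $k=1$. Given $L\in V^0_r(\omega_X^{\otimes m})\cap V^1_s(\omega_X)$, Proposition \ref{hzero} again forces $F(1,L)=(1,M)$ with $M\in V^0_r(\omega_Y^{\otimes m})$, and Corollary \ref{formula} with $k=l=1$ supplies
$$H^1(X,\omega_X\otimes L)\oplus H^0(X,\Omega_X^{d-1}\otimes L)\cong H^1(Y,\omega_Y\otimes M)\oplus H^0(Y,\Omega_Y^{d-1}\otimes M).$$
Applying Serre duality and the Hodge linear-conjugate isomorphism (exactly as in the proof of Corollary \ref{V1}) shows that each direct summand on a given side has the same dimension as its neighbor, forcing $h^1(X,\omega_X\otimes L)=h^1(Y,\omega_Y\otimes M)\geq s$, i.e.\ $M\in V^1_s(\omega_Y)$. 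Symmetry gives the reverse inclusion.

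There is no real obstacle in this argument; the only thing to keep track of is that passing to intersections with $V^0_r(\omega_X^{\otimes m})$ shortcuts the need to work with the $(\ldots)_0$ components. All the work was already done in Proposition \ref{hzero} (which forces $\psi=1$) and in Corollary \ref{formula} (which provides the Hochschild-style decomposition); the corollary is essentially a combinatorial consequence of these two facts together with Serre duality in the $V^1_s$ case.
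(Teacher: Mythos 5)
Your argument is correct and is essentially the paper's own proof: the paper likewise applies Proposition \ref{hzero} to force $F(1,L)=(1,M)$ with $M\in V^0_r(\omega_Y^{\otimes m})$ (so no Brion-type analysis is needed) and then concludes with Corollary \ref{formula} and the Serre duality/Hodge-conjugation step from Corollary \ref{V1}, plus the symmetric argument for $F^{-1}$. Only a cosmetic slip: the displayed isomorphism in your second case comes from Corollary \ref{formula} with twist exponent $0$ and $k=1$ (as in Corollary \ref{V1}), not with the exponent equal to $1$; the formula you actually use is the correct one.
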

\begin{proof}
If $L\in V^0_r(\omega_X^{\otimes m})$, then $F(1,L)=(1,M)$ for some $M\in V^0_r(\omega_Y^{\otimes m})$ by Proposition 
\ref{hzero}. Then we conclude as in the proofs of Theorem \ref{v1origin} and Corollary \ref{V1}.
\end{proof}

 \begin{rmk}\label{rmksplit}
 It is important to note that, whenever $F(1,{\rm Pic}^0(X))=(1,{\rm Pic}^0(Y))$, the proofs 
 of Theorem \ref{v1origin} and Corollary \ref{V1} yield full isomorphisms
 $$\bigcup_q V^{k-q}\big(\Omega_X^{d-q}\otimes \omega_X^{\otimes m}\big)\cong \bigcup_q V^{k-q}\big(\Omega_Y^{d-q}
 \otimes \omega_Y^{\otimes m}\big)\quad \mbox{ for any }\quad k\geq 0$$ $$V^1_r(\omega_X)\cong V^1_r(\omega_Y).$$
 By Theorem \ref{v1origin}, this occurs either if $V^p(\Omega_X^q\otimes \omega_X^{\otimes m})={\rm Pic}^0(X)$ 
 for some $p,q\geq 0$ and $m\in \rz$,
 or if ${\rm Aut}^0(X)$ is affine (since in this case 
 the abelian variety $A$ in the proof of Theorem \ref{v1origin} is trivial).
 \end{rmk}

\section{Popa's conjectures in dimension two and three}
In this section we prove Theorem \ref{intr3F} (i). The proofs of (ii) and (iii) are postponed in \S 6 since they use
the derived invariance of the Albanese dimension discussed in \S 5. First of all 
we make a couple of considerations in the case of surfaces.
\subsection{The case of surfaces}
Conjecture \ref{intrP} has been proved by Popa in \cite{Po}. His proof is based on an 
explicit computation of cohomological support loci. 
By using Proposition \ref{hzero} and Corollary \ref{V1}, we can see more precisely that 
the isomorphisms $V^k(\omega_X)\cong V^k(\omega_Y)$ are induced by the Rouquier isomorphism.
Moreover, along the same lines we can also show that $F$ induces the further isomorphism $V^1(\Omega_X^1)\cong V^1(\Omega_Y^1)$
(\emph{cf}. \cite{Lo} for a detailed analysis).
\begin{ex}[Elliptic surfaces]\label{rmkgamma0}
Let $X$ be an elliptic surface of Kodaira dimension one and of maximal Albanese dimension
(i.e. an isotrivial elliptic surface fibered onto a curve of genus $\geq 2$). 
By following \cite{Be2}, we recall an invariant attached to this type of surfaces.
The surface $X$ admits a unique fibration $f:X\longrightarrow C$ onto a curve of genus $\geq 2$. 
We denote by $G$ the general fiber of $f$ and by ${\rm Pic}^0(X,f)$ the kernel of 
the pull-back $f^*$
$$0\longrightarrow {\rm Pic}^0(X,f)\longrightarrow {\rm Pic}^0(X)\stackrel{f^*}{\longrightarrow} {\rm Pic}^0(G).$$ 
In \cite{Be2} (1.6), it is proved that there exists a finite 
group $\Gamma^0(f)$ and an isomorphism $${\rm Pic}^0(X,f)\cong f^*{\rm Pic}^0(C)\times \Gamma^0(f).$$ 
The group $\Gamma^0(f)$ is the invariant mentioned above; it
is identified with the group of the connected components of ${\rm Pic}^0(X,f)$. 
Any Fourier-Mukai partner $Y$ of $X$ is an elliptic surface fibered over $C$ (\emph{cf}. \cite{BM} Proposition 4.4). We denote by
$g:Y\longrightarrow C$ this (unique) fibration and by $\Gamma^0(g)$ its invariant.
In \cite{Ph} Theorem 5.2.7, Pham proves that if $\rd(X)\cong \rd(Y)$ then
\begin{align}\label{eqgamma}
\Gamma^0(f)\cong \Gamma^0(g).
\end{align}
Here we note that \eqref{eqgamma} also follows by the derived invariance of the zero-th cohomological support locus. This goes as follows. 
By \cite{Be2} Corollaire 2.3 and the results in \cite{Po} p. 5, we get isomorphisms
$$V^0(\omega_X)=V^1(\omega_X)\cong {\rm Pic}^0(X,f)\cong f^*{\rm Pic}^0(C)\times \Gamma^0(f).$$
Since $Y$ is of maximal Albanese dimension too by the results in \S 4.1 and \eqref{foralb}, we have 
$$V^0(\omega_Y)= V^1(\omega_Y)\cong g^*{\rm Pic}^0(Y,g)\cong g^*{\rm Pic}^0(C)\times \Gamma^0(g).$$
Thus, by Proposition \ref{hzero} 
$$f^*{\rm Pic}^0(C)\times \Gamma^0(f)\cong V^0(\omega_X)\cong V^0(\omega_Y)\cong g^*{\rm Pic}^0(C)\times \Gamma^0(g),$$ which 
in particular yields \eqref{eqgamma}.
\end{ex}

\subsection{Proof of Theorem \ref{intr3F} (i)}

\begin{theorem}\label{3-foldsorigin}
Let $X$ and $Y$ be smooth projective derived equivalent threefolds. 
Then the Rouquier isomorphism induces isomorphisms 
$$V^p_r(\Omega_X^q)_0\cong V^p_r(\Omega_Y^q)_0\quad \mbox{ for any }\quad p,q\geq 0\quad\mbox{ and }\quad r\geq 1.$$
\end{theorem}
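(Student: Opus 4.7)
The plan is to reduce the statement via the Hodge linear-conjugate isomorphism and Serre duality on $X$ and $Y$ to a short list of representative loci, and then to handle each representative with the tools already developed together with a single Riemann--Roch substitution. For any $L\in{\rm Pic}^0(X)$, the pointwise identities
\[
 h^p(X,\Omega_X^q\otimes L)=h^q(X,\Omega_X^p\otimes L^{-1})=h^{d-q}(X,\Omega_X^{d-p}\otimes L)
\]
give the set-theoretic equality $V^p_r(\Omega_X^q)=V^{d-q}_r(\Omega_X^{d-p})$ in ${\rm Pic}^0(X)$. Moreover each component of $V^p_r(\Omega_X^q)_0$ passing through the origin is an abelian subvariety of ${\rm Pic}^0(X)$ and hence closed under inversion, so together with $V^p(\Omega_X^q)=-V^q(\Omega_X^p)$ one also has $V^p_r(\Omega_X^q)_0=V^q_r(\Omega_X^p)_0$. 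The same identities hold on $Y$, and the Rouquier isomorphism $F$ is a group homomorphism and therefore commutes with inversion. In dimension three this collapses the theorem to the six orbit representatives $\{\sO_X\}$, $V^0_r(\omega_X)_0$, $V^1_r(\omega_X)_0$, $V^2_r(\omega_X)_0$, $V^1_r(\Omega_X^1)_0$ and $V^1_r(\Omega_X^2)_0$.

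The first three non-trivial representatives are Proposition \ref{hzero} (with $m=1$) and Corollary \ref{V1}. For $V^1_r(\Omega_X^1)_0$, given $L$ in this locus, Theorem \ref{v1origin} provides $F(1,L)=(1,M)$, and I would apply Corollary \ref{formula} with $k=3$, $m=0$: when $L\neq\sO_X$ the outer summands $H^3(\omega_X\otimes L)$ and $H^0(\sO_X\otimes L)$ vanish, and Serre duality combined with the Hodge linear-conjugate symmetry collapses the remaining $H^2(\Omega_X^2\otimes L)\oplus H^1(\Omega_X^1\otimes L)$ to $2h^1(\Omega_X^1\otimes L)$; the corresponding reduction on $Y$ yields $h^1(\Omega_X^1\otimes L)=h^1(\Omega_Y^1\otimes M)$. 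The origin case is handled by the derived invariance of $h^{1,1}$ for threefolds from \cite{PS}.

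The delicate case is $V^2_r(\omega_X)_0$: Corollary \ref{formula} with $k=2$, $m=0$ produces only the mixed identity
\[
 2h^2(\omega_X\otimes L)+h^1(\Omega_X^2\otimes L)=2h^2(\omega_Y\otimes M)+h^1(\Omega_Y^2\otimes M)
\]
(after using $h^0(\Omega_X^1\otimes L)=h^2(\omega_X\otimes L)$), which by itself cannot separate the two unknowns. The idea that unlocks this is a Riemann--Roch substitution. Corollary \ref{formula} with $k=0,1$ and $m=0$ already gives $h^0(\omega_X\otimes L)=h^0(\omega_Y\otimes M)$ and $h^1(\omega_X\otimes L)=h^1(\omega_Y\otimes M)$, while $\chi(\omega_X\otimes L)=\chi(\omega_X)=(-1)^d\chi(\sO_X)$ is constant on ${\rm Pic}^0(X)$ and is a derived invariant; the analogous statement holds on $Y$. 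For $L\neq\sO_X$ one has $h^3(\omega_X\otimes L)=0$, whence
\[
 h^2(\omega_X\otimes L)=\chi(\omega_X)-h^0(\omega_X\otimes L)+h^1(\omega_X\otimes L),
\]
and the identical formula on $Y$ forces $h^2(\omega_X\otimes L)=h^2(\omega_Y\otimes M)$. At the origin, $h^2(\omega_X)=h^{0,1}(X)$ is invariant for threefolds by \cite{PS}. Reinjecting this equality into the mixed $k=2$, $m=0$ identity then separates $h^1(\Omega_X^2\otimes L)=h^1(\Omega_Y^2\otimes M)$ for $L\neq\sO_X$, handling $V^1_r(\Omega_X^2)_0$; the origin is covered by the invariance of $h^{2,1}$.

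The main obstacle is precisely the mixing in the $k=2$, $m=0$ identity, since no choice of $(k,m)$ in Corollary \ref{formula} isolates $h^2(\omega_X\otimes L)$ alone. What makes this tractable in dimension three is that the extra inputs needed, namely the derived invariance of $\chi(\sO_X)$ together with the Hodge numbers $h^{0,1}$ and $h^{2,1}$, are exactly those available in this dimension via \cite{PS}; a corresponding strategy in higher dimension would require the invariance of further Hodge numbers, which is presently unknown in general.
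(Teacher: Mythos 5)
Your proposal is correct and follows essentially the same route as the paper: after the Serre duality/Hodge-conjugation reduction, you use Proposition \ref{hzero}, Corollary \ref{V1}, Theorem \ref{v1origin} and Corollary \ref{formula}, extract $h^2(\omega_X\otimes L)=h^2(\omega_Y\otimes M)$ from the derived invariance of $\chi(\omega_X)$ in dimension three (\cite{PS} Corollary C), and then separate $h^1(\Omega^2\otimes L)$ from the $k=2$ identity and handle $V^1_r(\Omega^1)_0$ via the $k=3$ identity, exactly as in the paper's proof. The only differences are presentational (your explicit reduction to six representative loci and the origin case via Hodge-number invariance, where the paper instead notes that $L$ is trivial if and only if $M$ is).
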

\begin{proof}
The isomorphisms $V^0_r(\omega_X)\cong V^0_r(\omega_Y)$ and $V^1_r(\omega_X)_0\cong V^1_r(\omega_Y)_0$ are proved in
Proposition \ref{hzero} and Corollary \ref{V1} respectively.
We now show $V^2_r(\omega_X)_0\cong V^2_r(\omega_Y)_0$.
By Theorem \ref{v1origin}, we have $F(1,V^2_r(\omega_X)_0)\subset (1,{\rm Pic}^0(Y))$ and hence, by Corollary \ref{formula},
$h^k(X,\omega_X\otimes L)=h^k(Y,\omega_Y\otimes M)$ whenever $F(1,L)=(1,M)$ and for $k=0,1$. 
By \cite{PS} Corollary C, the holomorphic Euler characteristic is a derived invariant in dimension three. This yields
$\chi(\omega_X\otimes L)=\chi(\omega_X)=\chi (\omega_Y)=\chi (\omega_Y\otimes M)$ and hence 
$h^2(X,\omega_X\otimes L)=h^2(Y,\omega_Y\otimes M)$.
Thus, 
if $L\in V^2_r(\omega_X)_0$, then $M\in V^2_r(\omega_Y)_0$ and consequently
$F$ induces isomorphisms $V^2_r(\omega_X)_0\cong V^2_r(\omega_Y)_0$. 
This in turn yields isomorphisms $V^0_r(\Omega_X^1)_0\cong V^0_r(\Omega_Y^1)_0$.

We now prove the isomorphisms $V^1_r(\Omega_X^q)_0\cong V^1_r(\Omega_Y^q)_0$ for $q=1,2$. 
By Theorem \ref{v1origin}, we have $F(1, V^1_r(\Omega_X^q)_0)\subset (1,{\rm Pic}^0(Y))$.
By Serre duality and the Hodge linear-conjugate isomorphism, we get $h^0(X,\Omega_X^1\otimes L)=
h^2(X,\omega_X \otimes L)$ and $h^0(Y,\Omega_Y^1\otimes M)=h^2(Y,\omega_Y\otimes M)$. Consequently, by
Corollary \ref{formula} (with $m=0$ and $k=2$), we obtain
$h^1(X,\Omega_X^2\otimes L)=h^1(Y,\Omega_Y^2\otimes M)$ and therefore  
$F$ induces isomorphisms $V^1_r(\Omega_X^2)_0\cong V^1_r(\Omega_Y^2)_0$ for all $r\geq 1$. 
The isomorphisms $V^1_r(\Omega_X^1)_0\cong V^1_r(\Omega_Y^1)_0$ follow again by  
Corollary \ref{formula} (with $m=0$ and $k=3$).
\end{proof}

\section{Behavior of the Albanese dimension under derived equivalence}
In this section we prove Theorem \ref{intralb}.
Our main tool is a generalization of a result due to Chen-Hacon-Pardini saying that 
if $f:X\longrightarrow Z$ is a non-singular representative of the 
Iitaka fibration of a smooth projective variety $X$ of maximal Albanese dimension, 
then $$q(X)-q(Z)=\ddim X-\ddim Z$$ (see \cite{HP} Proposition 2.1 and \cite{CH2} Corollary 3.6). 
We generalize this fact in two ways: 1) we consider all possible values of the Albanese dimension of $X$ and 2) 
we replace the Iitaka fibration of $X$ with a more general class of morphisms.
\begin{lemma}\label{hacon}
Let $X$ and $Z$ be smooth projective varieties and let $f:X\longrightarrow Z$ be a surjective morphism with connected fibers.
If the general fiber of $f$ is a smooth variety with surjective Albanese map, then
$$q(X)-q(Z)=\ddim {\rm alb}_X(X)-\ddim {\rm alb}_Z(Z).$$
\end{lemma}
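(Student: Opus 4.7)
The plan is to exploit the functoriality of the Albanese construction with respect to $f$, together with the surjectivity hypothesis on the Albanese map of a general fiber $F$. First I would invoke the universal property of the Albanese to obtain a homomorphism $f_{*}\colon {\rm Alb}(X) \to {\rm Alb}(Z)$, which is surjective because $f$ is surjective with connected fibers, together with $i_{*}\colon {\rm Alb}(F) \to {\rm Alb}(X)$ induced by the inclusion $i\colon F \hookrightarrow X$ of a general fiber (well-defined up to translation). The classical Albanese exact sequence
$${\rm Alb}(F) \stackrel{i_*}{\longrightarrow} {\rm Alb}(X) \stackrel{f_*}{\longrightarrow} {\rm Alb}(Z) \longrightarrow 0$$
then holds, so setting $K := i_{*}({\rm Alb}(F))$ we have $K = (\ker f_*)^{0}$ and $\ddim K = q(X)-q(Z)$.

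Next I would restrict $f_*$ to Albanese images. From ${\rm alb}_Z \circ f = f_{*} \circ {\rm alb}_X$ and the surjectivity of $f$, the restriction $f_{*}\colon {\rm alb}_X(X) \to {\rm alb}_Z(Z)$ is surjective, so
$$\ddim {\rm alb}_X(X) = \ddim {\rm alb}_Z(Z) + (\text{dimension of a general fiber}).$$
The heart of the argument is to identify this general fiber with a translate of $K$. Fix a general $w \in {\rm alb}_Z(Z)$ and set $Z_w := {\rm alb}_Z^{-1}(w)$; then the fiber of $f_{*}$ above $w$ is ${\rm alb}_X(f^{-1}(Z_w))$. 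On one hand, the surjectivity of ${\rm alb}_{F_z}$ for a general $z \in Z_w$ forces ${\rm alb}_X(F_z) = i_{*}({\rm Alb}(F_z)) + c_z = K + c_z$, a translate of $K$; in particular ${\rm alb}_X(f^{-1}(Z_w))$ contains such a translate. On the other hand, composing ${\rm alb}_X$ with the quotient ${\rm Alb}(X) \to {\rm Alb}(X)/K$ (isogenous to ${\rm Alb}(Z)$ by the Albanese sequence) yields a morphism $X \to {\rm Alb}(X)/K$ that contracts every fiber of $f$ to a point, hence by rigidity factors through $f$; consequently $f^{-1}(Z_w)$ lands in a single coset of $K$, giving the reverse containment. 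The two inclusions force the fiber to be exactly a translate of $K$, of dimension $\ddim K$.

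Combining the steps,
$$\ddim {\rm alb}_X(X) - \ddim {\rm alb}_Z(Z) = \ddim K = q(X) - q(Z),$$
which is the desired equality. The main obstacle I anticipate is coordinating the two inclusions in the fiber-dimension step: the hypothesis on ${\rm alb}_F$ enters precisely in the lower bound, to ensure that ${\rm alb}_X(F_z)$ fills out the whole coset $K+c_z$ rather than a proper subvariety, while the upper bound depends on the exactness of the Albanese sequence to ensure that $K$ is not strictly smaller than $(\ker f_*)^{0}$. Both parts are standard but need to be tracked carefully, since $i_*$ is defined only up to translation and $\ker f_*$ may be disconnected.
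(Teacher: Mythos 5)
Your argument is correct and follows essentially the same route as the paper: the functoriality diagram for the Albanese maps, the identification of the general fiber of ${\rm alb}_X(X)\to {\rm alb}_Z(Z)$ with a translate of $(\ker f_*)^0$ (using the surjectivity of the fiber's Albanese map), and the fiber-dimension theorem. The exactness/rigidity input you cite as the ``classical Albanese exact sequence'' is exactly what the paper imports from Hacon--Pardini, Proposition 2.1, and your possible worry about disconnectedness of $\ker f_*$ is harmless, since the fiber over a general point is in any case squeezed between a coset of $K$ and the finite union of cosets forming $f_*^{-1}(w)$, which suffices for the dimension count.
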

\begin{proof}
Due to the functoriality of the Albanese map, there exists a commutative diagram

\centerline{ \xymatrix@=32pt{  
 & X\ar[d]^f \ar[r] ^{{\rm alb}_X} & {\rm Alb}(X)\ar[d]^{f_*}\ar[d] \\ 
& Z \ar[r]^{{\rm alb}_Z} & {\rm Alb}(Z). \\ }}
\noindent The induced morphism $f_*$ is surjective since $f$ is so. 
As in the proof of \cite{HP} Proposition 2.1, one can show that $f_*$ has connected fibers and 
that the image of a general fiber of $f$ via ${\rm alb}_X$ is a translate of ${\rm ker} \,f_*$.
This implies that ${\rm alb}_X(X)$ is fibered in tori of dimension $q(X)-q(Z)$ over ${\rm alb}_Z(Z)$.
By the theorem on the dimension of the fibers of a morphism we get the stated equality.

\end{proof}

\begin{proof}[Proof of Theorem \ref{intralb}.]
The theorem holds for $\ddim X\leq 3$ by \S 4 and \eqref{foralb}, so we suppose $\ddim X> 3$.

If $\kappa(X)=\kappa (Y)=0$ then the Albanese maps of $X$ and $Y$ are surjective by \cite{Ka1} Theorem 1.
Thus the Albanese dimensions of $X$ and $Y$ are $q(X)$ and $q(Y)$ respectively which are equal by \cite{PS} Corollary B. 

We now suppose $\kappa(X)=\kappa(Y)>0$. 
Since the problem is invariant under birational modification, with a little abuse of notation, we consider
non-singular representatives $f:X\longrightarrow Z$ and $g:Y\longrightarrow W$ of the Iitaka fibrations of $X$ and $Y$ respectively 
(\emph{cf}. \cite{Mo} (1.10)). As the canonical rings of $X$ and $Y$ are isomorphic (\cite{Or} Corollary 2.1.9), 
it turns out that $Z$ and $W$ are birational varieties (\emph{cf}. \cite{Mo} Proposition 1.4 or \cite{To} p. 13).
By \cite{Ka1} Theorem 1, the morphisms $f$ and $g$ satisfy the hypotheses of Lemma \ref{hacon} which yields
$$q(X)-\ddim {\rm alb}_X(X)=q(Z)-\ddim {\rm alb}_Z(Z)=q(W)-\ddim {\rm alb}_W(W)=q(Y)-\ddim {\rm alb}_Y(Y).$$ We conclude 
as $q(X)=q(Y)$.
\end{proof}
The following corollary will play a central role in the proof of Theorem \ref{intr3F} (ii).
\begin{cor}\label{v1max}
 Let $X$ and $Y$ be smooth projective derived equivalent varieties with $X$ of maximal Albanese dimension and $F$ be the induced
Rouquier isomorphism.
If $F(1,L)=(\psi,M)$ with $L\in V^1_r(\omega_X)$, then $\psi=1$ and $M\in V^1_r(\omega_Y)$. Moreover, $F$ induces isomorphisms of 
algebraic sets 
$$V_r^1(\omega_X)\cong V_r^1(\omega_Y) \quad \mbox{ for any }\quad r\geq 1.$$ 
\end{cor}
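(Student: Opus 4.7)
The plan is to exploit the observation that for $X$ of maximal Albanese dimension the chain \eqref{forinc} with $k=0$ forces $V^1_r(\omega_X)\subset V^0(\omega_X)$. This inclusion lets me replace the Brion-type argument of Theorem \ref{v1origin}, which only handled components through the origin, by a direct application of Proposition \ref{hzero}, and so kill the automorphism component of $F(1,L)$ for every $L\in V^1_r(\omega_X)$, not merely for those in $V^1_r(\omega_X)_0$.

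Concretely, first I would fix $L\in V^1_r(\omega_X)$ and, using \eqref{forinc}, deduce $L\in V^0(\omega_X)$; Proposition \ref{hzero} then yields $F(1,L)=(1,M)$ for some $M\in V^0(\omega_Y)$, which settles the automorphism part. Next I would apply Corollary \ref{formula} with $m=0$ and $k=1$ to obtain
\[
H^1(X,\omega_X\otimes L)\oplus H^0(X,\Omega_X^{d-1}\otimes L)\cong H^1(Y,\omega_Y\otimes M)\oplus H^0(Y,\Omega_Y^{d-1}\otimes M).
\]
Serre duality combined with the Hodge linear-conjugate isomorphism, exactly as in the proof of Corollary \ref{V1}, identifies $h^0(X,\Omega_X^{d-1}\otimes L)=h^1(X,\omega_X\otimes L)$ and similarly on $Y$, so a summand-by-summand comparison gives $h^1(Y,\omega_Y\otimes M)=h^1(X,\omega_X\otimes L)\geq r$, i.e.\ $M\in V^1_r(\omega_Y)$.

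To upgrade this to an isomorphism of algebraic sets I need to run the same reasoning with $F^{-1}$ in place of $F$, which requires $Y$ to also be of maximal Albanese dimension. I would obtain this from Theorem \ref{intralb}: since ${\rm alb}_X(X)$ is a subvariety of an abelian variety and ${\rm alb}_X$ is generically finite onto its image, one has $\kappa(X)\geq 0$ (by Ueno's theorem together with the fact that generically finite dominant morphisms do not lower Kodaira dimension), so the hypothesis of Theorem \ref{intralb} is met and $\ddim {\rm alb}_Y(Y)=\ddim {\rm alb}_X(X)=\ddim Y$. Running the previous paragraph with $X$ and $Y$ interchanged then produces the inverse map, and the resulting bijective morphism is the desired isomorphism of algebraic sets. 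The main obstacle is precisely this transfer of the maximal Albanese dimension hypothesis to $Y$: it is the one extra ingredient beyond Corollary \ref{V1} that upgrades the origin-only statement to the full loci, and it is what forces the argument to come after Theorem \ref{intralb}.
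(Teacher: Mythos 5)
Your proposal is correct and follows essentially the same route as the paper: the paper also deduces $V^1_r(\omega_X)\subset V^0(\omega_X)$ and $V^1_r(\omega_Y)\subset V^0(\omega_Y)$ from \eqref{forinc} after invoking Theorem \ref{intralb} to transfer maximal Albanese dimension to $Y$, and then concludes via Corollary \ref{inter}, whose proof is exactly the combination of Proposition \ref{hzero} with the Corollary \ref{formula}/Serre-duality argument you spell out. Your explicit justification that $\kappa(X)\geq 0$ (so that Theorem \ref{intralb} applies in dimension $>3$) is left implicit in the paper but is the same underlying fact.
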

\begin{proof}
By Theorem \ref{intralb}, $Y$ is of maximal Albanese dimension as well.
By \eqref{forinc}, we get two inclusions: $V^1_r(\omega_X)\subset V^0(\omega_X)$ and  
$V^1_r(\omega_Y)\subset V^0(\omega_Y)$.
We conclude then by applying Corollary \ref{inter}.
\end{proof}

\section{End of the proof of Theorem \ref{intr3F}}\label{seccsl}
\subsection{Proof of Theorem \ref{intr3F} (ii)}
The following two propositions imply Theorem \ref{intr3F} (ii). 
\begin{prop}\label{prop3Fsplit}
 Let $X$ and $Y$ be smooth projective derived equivalent threefolds. Assume 
that either ${\rm Aut}^0(X)$ is affine, or
$V^p(\Omega_X^q\otimes \omega_X^{\otimes m})={\rm Pic}^0(X)$ for some $p,q\geq 0$ and $m\in \rz$.
Then the Rouquier isomorphism induces isomorphisms of algebraic sets
$$V^p_r(\Omega_X^q)\cong V^p_r(\Omega^q_Y)\quad \mbox{ for all }\quad p,q\geq 0 \quad \mbox{ and }\quad r\geq 1.$$
\end{prop}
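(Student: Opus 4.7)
The plan is to reduce, via Remark \ref{rmksplit}, to the situation where the Rouquier isomorphism $F$ satisfies the full splitting $F(1,{\rm Pic}^0(X))=(1,{\rm Pic}^0(Y))$, and then run the proof of Theorem \ref{3-foldsorigin} verbatim with the origin-component subscripts simply removed. Both hypotheses on $X$ are designed precisely so this reduction applies. If ${\rm Aut}^0(X)$ is affine, the abelian variety $A$ appearing in the proof of Theorem \ref{v1origin} is trivial (it injects into ${\rm Aut}^0(X)$), hence so is its isogenous partner $B$ by \cite{PS} Theorem A; the homomorphisms $\alpha$ and $\beta$ are therefore both trivial, so $F$ preserves both ${\rm Pic}^0$-factors. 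If instead $V^p(\Omega_X^q\otimes\omega_X^{\otimes m})={\rm Pic}^0(X)$ for some $p,q,m$, then the origin component of this locus is all of ${\rm Pic}^0(X)$, so Claim \ref{claimetale} forces $\beta\equiv 1$ on ${\rm Pic}^0(X)$; hence $B=0$, hence $A=0$ by \cite{PS} Theorem A, hence $\alpha\equiv 1$, giving the reverse inclusion.

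Once the splitting is in place, Corollary \ref{formula} gives, for every $L\in{\rm Pic}^0(X)$ with $F(1,L)=(1,M)$, the Hodge-sum isomorphisms
\begin{equation*}
\bigoplus_{q=0}^{k} H^{k-q}\big(X,\Omega_X^{3-q}\otimes L\big)\;\cong\;\bigoplus_{q=0}^{k} H^{k-q}\big(Y,\Omega_Y^{3-q}\otimes M\big),\qquad k\geq 0.
\end{equation*}
Combining these with Serre duality and the Hodge linear-conjugate isomorphism isolates individual dimensions $h^p(X,\Omega_X^q\otimes L)=h^p(Y,\Omega_Y^q\otimes M)$ in exactly the manner of Theorem \ref{3-foldsorigin}: Proposition \ref{hzero} already handles $V^0_r(\omega_X)\cong V^0_r(\omega_Y)$; the $k=1$ sum together with $h^0(\Omega_X^2\otimes L)=h^1(\omega_X\otimes L)$ handles $V^1_r(\omega_X)$; the invariance of $\chi$ in dimension three (\cite{PS} Corollary C) plus the already-invariant $h^0$ and $h^1$ handles $V^2_r(\omega_X)$; and $V^3_r(\omega_X)=\{\sO_X\}$ is trivial. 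The remaining loci $V^p_r(\Omega_X^q)$ with $q<3$ follow by applying the same Serre-Hodge symmetries to the established equalities, just as at the end of the proof of Theorem \ref{3-foldsorigin}.

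The main obstacle, and essentially the only new input beyond Theorem \ref{3-foldsorigin}, is the reduction step itself: in particular, for the second hypothesis one must check that the fullness of a single cohomological support locus on $X$ really does imply the full equality $F(1,{\rm Pic}^0(X))=(1,{\rm Pic}^0(Y))$ and not just one inclusion. The argument above via the triviality of $B$ (hence of $A$ by the isogeny of \cite{PS} Theorem A) handles this cleanly. Once this step is secured, no new cohomological calculation is required; the rest of the argument is a routine re-run of Theorem \ref{3-foldsorigin} with every origin-component decoration dropped.
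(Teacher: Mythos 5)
Your proposal is correct and follows essentially the same route as the paper: reduce via Remark \ref{rmksplit} (affine ${\rm Aut}^0(X)$ or a full locus forcing $F(1,{\rm Pic}^0(X))=(1,{\rm Pic}^0(Y))$) and then rerun the argument of Theorem \ref{3-foldsorigin} with the origin-component restrictions dropped, using Proposition \ref{hzero}, Corollary \ref{formula} with Serre duality and the Hodge linear-conjugate isomorphism, and the derived invariance of $\chi(\omega_X)$ in dimension three. Your extra detail justifying the splitting under the second hypothesis is consistent with what the paper delegates to Remark \ref{rmksplit} and Theorem \ref{v1origin}.
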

\begin{proof}
By Remark \ref{rmksplit} we have $F(1,{\rm Pic}^0(X))=(1,{\rm Pic}^0(Y))$.
The isomorphisms $V^0_r(\omega_X)\cong V^0_r(\omega_Y)$ and $V^1_r(\omega_X)\cong V^1_r(\omega_Y)$ hold 
by Proposition \ref{hzero} and Remark \ref{rmksplit} respectively. The isomorphisms $V^2_r(\omega_X)\cong V^2_r(\omega_Y)$ follow since 
$\chi (\omega_X)=\chi (\omega_Y)$ (\emph{cf}. \cite{PS} Corollary C). 
We now establish the isomorphisms $V^1_r(\Omega_X^2)\cong V^1_r(\Omega_Y^2)$. Let $L\in V^1_r(\Omega_X^2)$ and $F(1,L)=(1,M)$.
By Corollary \ref{formula} (with $m=0$ and $k=2$), Serre duality and the Hodge linear-conjugate isomorphism, we get
$h^1(X,\Omega_X^2\otimes L)=h^1(Y,\Omega_Y^2\otimes M)$. This says that $F$ maps $1\times V^1_r(\Omega_X^2)\mapsto 1\times 
V^1_r(\Omega_Y^2)$ inducing the 
wanted isomorphisms. 
For the isomorphisms $V^1_r(\Omega_X^1)\cong V^1_r(\Omega_Y^1)$, it is enough to argue as before by using Corollary \ref{formula} 
(with $m=0$ and $k=3$).

\end{proof}

\begin{prop}\label{propalbsplit3f}
 Let $X$ and $Y$ be smooth projective derived equivalent threefolds with $X$ of maximal Albanese dimension.
Then the Rouquier isomorphism induces isomorphisms of algebraic sets 
$$V^k_r(\omega_X)\cong V^k_r(\omega_Y)\quad \mbox{ for all }\quad k\geq 0\quad \mbox{ and }\quad r\geq 1.$$
\end{prop}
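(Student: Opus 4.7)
The plan is to treat each $V^k_r(\omega_X)$ separately for $k = 0, 1, 2, 3$. By Theorem \ref{intralb}, $Y$ is also of maximal Albanese dimension, so \eqref{forinc} yields
$$\{\sO_X\} = V^3(\omega_X) \subset V^2(\omega_X) \subset V^1(\omega_X) \subset V^0(\omega_X)$$
and similarly for $Y$. The case $k=0$ is immediate from Proposition \ref{hzero}, and the case $k=1$ is precisely Corollary \ref{v1max}. The case $k=3$ is essentially trivial: both $V^3_r(\omega_X)$ and $V^3_r(\omega_Y)$ are empty for $r \geq 2$, while for $r=1$ they reduce to the respective origins, which are preserved by the Rouquier isomorphism $F$ since $F$ is a homomorphism of algebraic groups. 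The substance of the argument is therefore the case $k=2$.

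For $k=2$, I would proceed as follows. Take $L \in V^2_r(\omega_X)$. The inclusion $V^2(\omega_X) \subset V^0(\omega_X)$ together with Proposition \ref{hzero} gives $F(1,L) = (1,M)$ with $M \in V^0(\omega_Y)$. It then suffices to show $h^2(X,\omega_X \otimes L) = h^2(Y,\omega_Y \otimes M)$, since this gives $L \in V^2_r(\omega_X) \Rightarrow M \in V^2_r(\omega_Y)$, and the symmetric argument with $F^{-1}$ supplies the reverse inclusion.

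To pin down $h^2$, I would identify the other three cohomology groups first. Equality of $h^0(\omega_X \otimes L)$ with $h^0(\omega_Y \otimes M)$ is Proposition \ref{hzero}; equality in degree $1$ is Corollary \ref{v1max}, which, for each $s \geq 1$, matches membership of $L$ in $V^1_s(\omega_X)$ with membership of $M$ in $V^1_s(\omega_Y)$. Serre duality gives $h^3(X,\omega_X \otimes L) = h^0(X, L^{-1})$, which is $1$ if $L \cong \sO_X$ and $0$ otherwise (and likewise for $Y$); since $F$ sends the identity to the identity, these values match. The final ingredient is the derived invariance of the holomorphic Euler characteristic for threefolds (\cite{PS} Corollary C) combined with the deformation invariance of $\chi$ in the family $\{\omega_X \otimes L\}_{L \in {\rm Pic}^0(X)}$: together these yield $\chi(\omega_X \otimes L) = \chi(\omega_X) = \chi(\omega_Y) = \chi(\omega_Y \otimes M)$, and the equalities in degrees $0$, $1$, $3$ then force the desired equality in degree $2$.

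The main delicate point is the appeal to Corollary \ref{v1max}, whose availability rests on the maximal Albanese dimension hypothesis: this is what upgrades the isomorphism $V^1_r(\omega_X)_0 \cong V^1_r(\omega_Y)_0$ of Corollary \ref{V1} to the \emph{full} isomorphism, allowing us to match $h^1(\omega_X \otimes L)$ with $h^1(\omega_Y \otimes M)$ at points $L$ that need not lie in any origin component of $V^1$. Beyond this, the argument is simply an assembly of results already established.
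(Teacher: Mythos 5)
Your proposal is correct and follows essentially the same route as the paper: reduce to $k=2$ via Proposition \ref{hzero} and Corollary \ref{v1max}, use $V^2_r(\omega_X)\subset V^0(\omega_X)$ to get $F(1,L)=(1,M)$, and recover $h^2$ from the derived invariance of $\chi(\omega_X)$ once the other cohomologies are matched. The only cosmetic differences are that you match $h^1$ by citing Corollary \ref{v1max} where the paper invokes Corollary \ref{formula} directly (the same underlying machinery), and you make explicit the $h^3$ and $k=3$ bookkeeping that the paper leaves implicit.
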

\begin{proof}
Proposition \ref{hzero} and Corollary \ref{v1max} yield
the isomorphisms
$V^k_r(\omega_X)\cong V^k_r(\omega_Y)$ for any $k\neq 2$, so we only focus on the case $k=2$.
Since by \eqref{forinc} $V_r^2(\omega_X)\subset V^0(\omega_X)$, we have that
$F(1,V^2_r(\omega_X))\subset (1,{\rm Pic}^0(Y))$ (see Proposition \ref{hzero}).
Hence, 
by Corollary \ref{formula}, we get that 
$h^k(X,\omega_X\otimes L)=h^k(Y,\omega_Y\otimes M)$ whenever $F(1,L)=(1,M)$ with $L\in V_r^2(\omega_X)$ and for $k=0,1$.
As $\chi(\omega_X)=\chi(\omega_Y)$, we get that $h^2(X,\omega_X\otimes L)=h^2(Y,\omega_Y\otimes M)$ and therefore $F$ and $F^{-1}$ induce 
%
the desired isomorphisms.
\end{proof}

\subsection{Proof of Theorem \ref{intr3F} (iii)}
The proof of Theorem \ref{intr3F} (iii) relies on the determination of the dimensions of cohomological support loci
for special types of threefolds
according to the invariants $\kappa (X)$, $q(X)$ and $\ddim {\rm alb}_X(X)$ which are preserved by derived equivalence. 
This study is of independent interest
and it does not use any results from the theory of the derived categories but rather it 
follows by generic vanishing theory, Koll\'{a}r's result on the degeneration of the Leray spectral sequence and classification theory
of algebraic surfaces.   

More specifically, thanks to Theorem \ref{intr3F} (ii), we can assume that $X$
is a threefold not of general type, not 
of maximal Albanese dimension and such that ${\rm Aut}^0(X)$ is not affine 
(therefore $\chi(\omega_X)=0$ by \cite{PS} Corollary 2.6).
Furthermore, by the derived invariance of $\chi(\omega_X)$ and by Proposition \ref{hzero} and Corollary \ref{inter}
we can assume $V^0(\omega_X)\subsetneq {\rm Pic}^0(X)$. 
Thanks to our previous results (Proposition \ref{hzero} and Theorem \ref{intralb}) 
and to the ones in \cite{PS}, the Fourier-Mukai partner $Y$ satisfies the same hypotheses as $X$.
At this point Theorem \ref{intr3F} (iii) follows since Propositions \ref{pr1} - \ref{pr5} classify $\ddim V^k(\omega_X)$ and 
$\ddim V^k(\omega_Y)$ in terms of derived invariants. The following two lemmas will be useful for our analysis.

\begin{lemma}\label{p2}
Let $X$ and $Y$ be smooth projective varieties and $f:X\longrightarrow Y$ a surjective morphism with connected fibers. 
If $h$ denotes the dimension of the general fiber of $f$, then $$f^*V^k(\omega_Y)\subset V^{k+h}(\omega_X)\quad\mbox{ for any }
\quad k=0,\ldots ,\ddim Y.$$
\end{lemma}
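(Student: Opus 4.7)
The plan is to deduce the inclusion from Koll\'ar's theorems on higher direct images of dualizing sheaves. Concretely, I want to show that if $L \in V^k(\omega_Y)$, then $H^{k+h}(X,\omega_X\otimes f^*L)\neq 0$, and to realize a nonzero class on $X$ as coming from a nonzero class on $Y$ via the Leray spectral sequence.

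First I would invoke the projection formula to rewrite $R^q f_*(\omega_X\otimes f^*L) \cong R^q f_*\omega_X \otimes L$ for every $q$. The next step is Koll\'ar's theorem \emph{(cf.} Koll\'ar, \emph{Higher direct images of dualizing sheaves I, II}): since $f:X\to Y$ is a surjective morphism of smooth projective varieties with connected fibers of (generic) dimension $h$, the sheaves $R^q f_*\omega_X$ are torsion-free for all $q$, and in particular the top direct image satisfies
\begin{equation*}
R^h f_* \omega_X \cong \omega_Y.
\end{equation*}

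Then I would use Koll\'ar's decomposition/degeneration theorem, which asserts that $Rf_*\omega_X$ splits in $\mathbf{D}(Y)$ as $\bigoplus_q R^q f_*\omega_X[-q]$. Tensoring with the locally free sheaf $L$ preserves this decomposition, so the Leray spectral sequence
\begin{equation*}
E_2^{p,q} = H^p\bigl(Y, R^q f_*\omega_X \otimes L\bigr) \Longrightarrow H^{p+q}\bigl(X,\omega_X\otimes f^*L\bigr)
\end{equation*}
degenerates at $E_2$. Consequently $H^{k+h}(X,\omega_X\otimes f^*L)$ contains $H^k(Y, R^h f_*\omega_X \otimes L)\cong H^k(Y,\omega_Y\otimes L)$ as a direct summand. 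If $L\in V^k(\omega_Y)$ then the latter is nonzero, so $f^*L \in V^{k+h}(\omega_X)$, which is the claim.

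There is no real obstacle here: the only point that one should double-check is that Koll\'ar's $E_2$-degeneration persists after twisting by a topologically trivial line bundle pulled back from $Y$, but this is automatic from the derived-category splitting of $Rf_*\omega_X$, since $-\otimes L$ is an exact autoequivalence of $\mathbf{D}(Y)$. Thus the argument reduces to citing Koll\'ar's torsion-freeness, the identification $R^h f_*\omega_X\cong \omega_Y$, and the $E_2$-degeneration.
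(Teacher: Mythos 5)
Your proof is correct and follows essentially the same route as the paper: the paper also deduces the inclusion from the degeneration of the Leray spectral sequence for $\omega_X\otimes f^*L$ (Koll\'ar) together with the identification $R^h f_*\omega_X\cong \omega_Y$, with your derived-category splitting remark being just a slightly more explicit justification of why the degeneration survives twisting by $L\in {\rm Pic}^0(Y)$. No gaps.
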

\begin{proof}
This follows by the degeneration of the Leray spectral sequence (\cite{Ko2} Theorem 3.1) 
$$E_2^{p,q}=H^p(Y,R^q f_*\omega_X\otimes L)\Longrightarrow H^{p+q}(X,\omega_X\otimes f^*L)\quad \mbox{ for any }
\quad L \in {\rm Pic}^0(Y)$$ and by
the fact $R^h f_*\omega_X\cong \omega_Y$ (\cite{Ko1} Proposition 7.6).
\end{proof}

\begin{lemma}\label{v0neg}
 Let $X$ be a smooth projective variety with $\kappa(X)=-\infty$. Then $V^0(\omega_X^{\otimes m})=\emptyset$ for any $m>0$.
\end{lemma}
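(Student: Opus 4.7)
The plan is to argue by contradiction: assume $V^0(\omega_X^{\otimes m})$ contains some line bundle $L$, and fix a nonzero section $s\in H^0(X,\omega_X^{\otimes m}\otimes L)$. First I would dispose of the case where $L$ is a torsion element of ${\rm Pic}^0(X)$: if $L^{\otimes n}\cong \sO_X$, then $s^{\otimes n}$ is a nonzero element of $H^0(X,\omega_X^{\otimes mn})$, contradicting $\kappa(X)=-\infty$ directly.

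To handle a general (non-torsion) $L$, I would invoke the divisibility of the abelian variety ${\rm Pic}^0(X)$: since the multiplication-by-$m$ map is surjective, one can write $L\cong (L')^{\otimes m}$ for some $L'\in {\rm Pic}^0(X)$. Then
$$\omega_X^{\otimes m}\otimes L\ \cong\ (\omega_X\otimes L')^{\otimes m},$$
so the section $s$ exhibits $\omega_X\otimes L'$ as $\mathbb{Q}$-effective: the $\mathbb{Q}$-divisor $(1/m)\,\mathrm{div}(s)$ is effective and linearly equivalent to $K_X+L'$. Since $L'$ is numerically trivial, $\omega_X\otimes L'\equiv \omega_X$ numerically, so $K_X$ itself is numerically equivalent to an effective $\mathbb{Q}$-divisor; in particular $K_X$ is pseudoeffective.

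To reach a contradiction I would then combine this with two standard facts: (a) pseudoeffectivity of $K_X$ implies that $X$ is not uniruled (the classical easy direction of BDPP, proved by intersecting $K_X$ with a covering family of rational curves); and (b) in the dimensions where this lemma is applied in the paper (namely $\ddim X\leq 3$, in the proof of Theorem \ref{intr3F} (iii)), $\kappa(X)=-\infty$ forces $X$ to be uniruled, by the minimal model program. Together these contradict the pseudoeffectivity of $K_X$ established above.

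The main obstacle is the non-torsion case: the torsion step is immediate, whereas handling a general $L$ requires both the divisibility trick and the nontrivial input that, at least in dimension $\leq 3$, $\kappa(X)=-\infty$ prevents $K_X$ from being pseudoeffective. In higher dimension this last implication is conjectural (a consequence of abundance), but since the lemma is only invoked in the threefold setting, the argument above is unconditional there.
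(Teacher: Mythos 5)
Your torsion case is exactly the paper's final step, but your treatment of a general $L$ leaves a genuine gap with respect to the statement as written: the lemma is asserted for smooth projective varieties of arbitrary dimension, while your contradiction needs the implication $\kappa(X)=-\infty \Rightarrow X$ uniruled, which (as you yourself note) is a theorem only in dimension $\leq 3$ and conjectural beyond that. So the argument proves a dimension-restricted variant; it does cover the one place the lemma is invoked (Proposition \ref{pr5}, about threefolds), but it does not prove the lemma itself, and it does so at the cost of importing threefold MMP/uniruledness results that the paper never needs.

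The missing idea is precisely how the paper avoids the non-torsion case altogether: by \cite{CH2} Theorem 3.2, the locus $V^0(\omega_X^{\otimes m})$ is a finite union of torsion translates of abelian subvarieties of ${\rm Pic}^0(X)$, so if it is nonempty one may assume from the start that $L$ is torsion, say of order $e$; then the same power trick you use in your first paragraph gives $0\neq s^{\otimes e}\in H^0(X,\omega_X^{\otimes me})$, contradicting $\kappa(X)=-\infty$ in every dimension. Your alternative route for non-torsion $L$ (divisibility of ${\rm Pic}^0(X)$, writing $L\cong (L')^{\otimes m}$, concluding that $K_X$ is pseudoeffective, and contradicting uniruledness via the easy direction of BDPP) is sound as far as it goes, but it replaces a generic-vanishing-type structural input by the much deeper nonvanishing/uniruledness input, and thereby loses the generality of the statement.
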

\begin{proof}
 Suppose that $L\in V^0(\omega_X^{\otimes m})$ for some $m>0$. 
By \cite{CH2} Theorem 3.2, we can assume that $L$ is a torsion line bundle. Let $e$ be the order of $L$. If $H^0(X,
\omega_X^{\otimes m}\otimes L)\neq 0$ then it is easy to see that $H^0(X,\omega_X^{\otimes me})\neq 0$; this yields a contradiction.
%
%
\end{proof}
\begin{prop}\label{pr1}
 Let $X$ be a smooth projective threefold. Suppose $\kappa (X)=2$, $\ddim {\rm alb}_X(X)=2$, $\chi (\omega_X)=0$ and 
$V^0(\omega_X)\varsubsetneq {\rm Pic}^0(X)$. 
If $q(X)=2$, then $\ddim V^1(\omega_X)=1$ and $\ddim V^2(\omega_X)=0$. If $q(X)>2$, then  
 $\ddim V^1(\omega_X)=\ddim V^2(\omega_X)=q(X)-1$.
\end{prop}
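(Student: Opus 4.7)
The plan is to reduce the computation of cohomological support loci on $X$ to ones on the base of its Iitaka fibration, and then combine lower bounds coming from Lemma~\ref{p2} with upper bounds coming from shifted generic vanishing, splitting according to whether $q(X)=2$ or $q(X)>2$.

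First, since $\kappa(X)=2$, I would pass to a nonsingular model $\phi:X\longrightarrow Z$ of the Iitaka fibration onto a smooth projective surface $Z$ of general type whose general fiber is a smooth elliptic curve. Because the general fiber has surjective Albanese map, Lemma~\ref{hacon} applies and gives
$$q(Z)=q(X)-2+\ddim {\rm alb}_Z(Z),$$
so that $Z$ is of maximal Albanese dimension precisely when $q(X)=2$. Lemma~\ref{p2} with $h=1$ then yields $\phi^{*}V^{k-1}(\omega_Z)\subset V^{k}(\omega_X)$ for $k=1,2,3$, providing lower bounds $\ddim V^{k}(\omega_X)\geq\ddim V^{k-1}(\omega_Z)$. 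For the upper bounds, the shifted generic vanishing theorem of Pareschi--Popa (applicable since $\ddim {\rm alb}_X(X)=\ddim X-1$) gives $\codim_{{\rm Pic}^0(X)}V^{j}(\omega_X)\geq j-1$ for $j\geq 1$, and in particular $\ddim V^{2}(\omega_X)\leq q(X)-1$.

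To exploit the geometry of ${\rm alb}_X(X)$, I would invoke Theorem~E of \cite{PP2}: the hypothesis $V^0(\omega_X)\varsubsetneq{\rm Pic}^0(X)$ forces ${\rm alb}_X(X)$ to be fibered in positive-dimensional subtori, and since it is two-dimensional these subtori are elliptic curves, producing an auxiliary quotient ${\rm Alb}(X)\longrightarrow A$ with $\ddim A=q(X)-1$. When $q(X)=2$, the surface $Z$ is of maximal Albanese dimension, and combining $\chi(\omega_X)=0$ (which descends to $\chi(\omega_Z)=0$ via the projection formula $\phi_{*}\omega_X\cong\omega_Z$ and Koll\'ar's vanishing) with the classical analysis of surfaces of general type with vanishing Euler characteristic (\cite{Be2}, \cite{GL1}) pins down $\ddim V^1(\omega_Z)=1$ and $V^2(\omega_Z)=\{\sO_Z\}$; lifting through the previous step yields the claimed $\ddim V^{1}(\omega_X)=1$ and $\ddim V^{2}(\omega_X)=0$. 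When $q(X)>2$, $Z$ is not of maximal Albanese dimension, so by classification $Z$ admits an irrational pencil onto a smooth curve of genus $\geq 2$, and this pencil pulled back to $X$ contributes pulled-back components of dimension $q(X)-1$ inside $V^1(\omega_X)$ and $V^2(\omega_X)$, saturating the upper bound.

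The hard part will be to match upper and lower bounds exactly in the case $q(X)>2$: the shifted generic vanishing estimate only yields $\ddim V^{2}(\omega_X)\leq q(X)-1$, so one must produce enough positive-dimensional components from the combined fibrations $X\to A$ and $X\to C$ to actually achieve this dimension, and simultaneously tie $V^{1}$ and $V^{2}$ together via the identity $\sum_{k}(-1)^{k}h^{k}(\omega_X\otimes L)=\chi(\omega_X)=0$ and Serre duality. A careful Leray spectral sequence analysis for $\phi$ will be needed to transfer structure from $V^{k}(\omega_Z)$ to $V^{k+1}(\omega_X)$ without losing dimensions, ensuring the two bounds meet on the nose.
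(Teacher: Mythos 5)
Your overall plan (reduce to the Iitaka base via Lemma \ref{hacon} and Lemma \ref{p2}, then play lower bounds against generic vanishing upper bounds) is the paper's strategy in outline, but two of your structural claims are false and they carry the whole argument. First, the dichotomy ``$Z$ is of maximal Albanese dimension precisely when $q(X)=2$'' does not follow from $q(Z)=q(X)-2+\ddim{\rm alb}_Z(Z)$, and it is in fact the opposite of what happens: the paper's Claim \ref{c3} shows that under the hypotheses ($\chi(\omega_X)=0$, $V^0(\omega_X)\subsetneq{\rm Pic}^0(X)$, $\ddim{\rm alb}_X(X)=2$) one always has $\ddim{\rm alb}_Z(Z)=1$, the maximal Albanese dimension case being excluded by Castelnuovo's inequality $\chi(\omega_Z)>0$, Lemma \ref{p2} and formula \eqref{foralb}. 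You never rule that case out, so even your $q(X)>2$ branch is incomplete; moreover the lower bound $\ddim V^2(\omega_X)\geq q(X)-1$ needs the precise facts that ${\rm alb}_Z(Z)$ is a curve $C$ of genus $q(Z)=q(X)-1$ and ${\rm Pic}^0(Z)={\rm alb}_Z^*J(C)$ (Beauville), which you assert only as ``a pencil of genus $\geq 2$''.

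Second, the $q(X)=2$ branch is built on impossibilities: $Z$ has general type, so there is no ``classical analysis of surfaces of general type with vanishing Euler characteristic'' to invoke ($\chi(\omega_Z)>0$ always, by Castelnuovo); also $\phi_*\omega_X\not\cong\omega_Z$ (Koll\'ar gives $R^1\phi_*\omega_X\cong\omega_Z$), so $\chi(\omega_X)=0$ does not descend to $Z$. In the correct picture $q(Z)=1$ here, and the pullback of ${\rm Pic}^0(Z)$ lands in $V^1(\omega_X)$, giving $\ddim V^1(\omega_X)=1$ once $V^1={\rm Pic}^0(X)$ is excluded via $\chi(\omega_X)=0$ and \eqref{foralb}. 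The equality $\ddim V^2(\omega_X)=0$ is genuinely the delicate point and your generic vanishing estimate only gives $\ddim V^2(\omega_X)\leq 1$: the paper needs a separate argument, taking the Stein factorization of ${\rm alb}_X$, showing its (smooth model of the) image surface is birational to an abelian surface, and then using Koll\'ar's splitting together with the fact that the pushforward of $\omega_X$ is a GV-sheaf (\cite{PP2} Theorem 5.8) to force $\codim V^2(\omega_X)\geq 2$. Nothing in your proposal supplies this step, so the claimed conclusion $\ddim V^2(\omega_X)=0$ is not reached.
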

\begin{proof}
Since the problem is invariant under birational modification, with a little abuse of notation, 
we consider a non-singular representative $f:X\longrightarrow S$ of the 
Iitaka fibration of $X$ (\emph{cf}. \cite{Mo} (1.10)). We study the commutative diagram

\centerline{ \xymatrix@=32pt{  
 & X  \ar[d]^f \ar[r]^{{\rm alb}_X} & {\rm Alb}(X)\ar[d]^{f_*}\\ 
& S\ar[r]^{{\rm alb}_S} & {\rm Alb}(S).\\ }}
\noindent
We first show
\begin{claim}\label{c3}
$\ddim {\rm alb}_S(S)=1$.
\end{claim}
\begin{proof}
If by contradiction $\ddim {\rm alb}_S(S)=0$, then by Lemma \ref{hacon} $q(S)=0$ and $q(X)=2$. This yields an absurd
since via ${\rm alb}_X$ a general fiber of $f$ is mapped onto a translate of ${\rm ker}\, f_*$.
Suppose now, again by contradiction, that $\ddim {\rm alb}_S(S)=2$. Then $S$ is of maximal Albanese dimension and
$q(X)=q(S)$. The morphism $f$ has connected fibers and induces an inclusion 
$f^*{\rm Pic}^0(S)\hookrightarrow {\rm Pic}^0(X)$ 
which is an isomorphism for dimension reasons. 
Since $S$ is a surface of general type, by Castelnuovo's Theorem we have 
$\chi (\omega_S)>0$ (\emph{cf}. \cite{Be} Theorem X.4) and hence $V^0(\omega_S)={\rm Pic}^0(S)$.
By applying Lemma \ref{p2} to $f$, we obtain 
$f^*V^0(\omega_S)\subset V^1(\omega_X)$ and hence $V^1(\omega_X)={\rm Pic}^0(X)$.
Since $V^0(\omega_X)\subsetneq {\rm Pic}^0(X)$ and $\chi(\omega_X)=0$, we obtain
$V^2(\omega_X)={\rm Pic}^0(X)$ which contradicts \eqref{foralb}.
\end{proof}

By the previous claim and Lemma \ref{hacon}, we have $q(X)=q(S)+1$. Moreover,   
${\rm alb}_S$ 
has connected fibers and the image of ${\rm alb}_S$ is a smooth curve of genus $q(S)$ (see \cite{Be} Proposition V.15). 
Via pull-back we get an injective homomorphism 
$f^*{\rm Pic}^0(S)\hookrightarrow {\rm Pic}^0(X)$. We distinguish two subcases: $q(S)=1$ and $q(S)>1$. 

We begin with the case $q(S)=1$.
Then $q(X)=2$ and both ${\rm alb}_X$ and ${\rm alb}_S$ are surjective.
Since $\chi(\omega_S)>0$, we have $V^0(\omega_S)={\rm Pic}^0(S)$ and, by 
Lemma \ref{p2}
$$f^*V^0(\omega_S)=f^*{\rm Pic}^0(S)\subset V^1(\omega_X).$$
Thus either $V^1(\omega_X)={\rm Pic}^0(X)$ or $\ddim V^1(\omega_X)=1$.
But, if $V^1(\omega_X)={\rm Pic}^0(X)$, then $V^2(\omega_X)={\rm Pic}^0(X)$ as $\chi (\omega_X)=0$.
This contradicts \eqref{foralb}. 
We have then 
\begin{equation}\label{eqv1}
\ddim V^1(\omega_X)=1.
\end{equation}
We now show that $$\ddim V^2(\omega_X)=0$$ (again in the case $q(S)=1$). 
Let $X\stackrel{a}{\longrightarrow} Z\stackrel{b}{\longrightarrow} {\rm Alb}(X)$ be the Stein factorization of ${\rm alb}_X$. 
The surface $Z$ is normal, the morphism $a$ has connected fibers and $b$ is finite. 
Let $g:X'\longrightarrow Z'$ be a non-singular representative of $a$, which still has connected fibers.
To compute $V^2(\omega_X)$, it is enough to compute $V^2(\omega_{X'})$ since $X$ and $X'$ are birational.
By studying the Leray spectral sequence
of $g$, $V^2(\omega_{X'})$ is in turn 
computed by looking at the $V^k(\omega_{Z'})$'s.
The following claim computes the cohomological support loci of $Z'$.
\begin{claim}
 $Z'$ is birational to an abelian surface.
\end{claim}
\begin{proof}
It is not hard to check that $Z'$ is of maximal Albanese dimension and hence that $\kappa(Z')\geq 0$.
If by contradiction $k(Z')=2$, then Castelnuovo's Theorem (see Claim \ref{c3}) would imply $V^0(\omega_Z')= {\rm Pic}^0(Z')$.
Thus, by Lemma \ref{p2}, we would get $$V^1(\omega_{X'})\supset g^*V^0(\omega_{Z'})=g^*{\rm Pic}^0(Z')$$ contradicting 
\eqref{eqv1}. 
If $k(Z')=1$, then $Z'$ would be birational to an elliptic surface of maximal Albanese dimension fibered onto a curve $B$ of genus $g(B)\geq 2$. 
This yields a contradiction as Lemma \ref{hacon} would imply $g(B)=1$. 
Hence $\kappa (Z')=0$ and $Z'$ is birational to an abelian surface as it is of maximal Albanese dimension.
\end{proof}

We now compute $V^2(\omega_{X'})$. Since $q(X')=q(Z')=2$, the morphism $g$ induces an isomorphism ${\rm Pic}^0(Z')\cong {\rm Pic}^0(X')$.
By the degeneration of the Leray spectral sequence
$$H^p(Z',R^qg_*\omega_{X'}\otimes L)\Longrightarrow H^{p+q}(X',\omega_{X'}\otimes g^*L)\quad \mbox{for any}\quad 
L\in {\rm Pic}^0(Z'),$$ 
and by using that $R^1g_*\omega_{X'}\cong \omega_{Z'}$ (\cite{Ko1} Proposition 7.6), 
we obtain
\begin{equation*}\label{eqspes}
H^2(X',\omega_{X'}\otimes g^*L)\cong H^1(Z',\omega_{Z'} \otimes L)\oplus H^2(Z',g_*\omega_{X'}\otimes L).
\end{equation*}
By \cite{PP2} Theorem 5.8, $g_*\omega_{X'}$ is a $GV$-sheaf on $Z'$. Hence 
$\codim_{{\rm Pic}^0(Z')} V^2(g_*\omega_{X'})\geq 2$ and consequently
$\codim _{{\rm Pic}^0(X)} V^2(\omega_X)=\codim_{{\rm Pic}^0(X')} V^2(\omega_{X'})\geq 2$.

We now study the case $q(S)=q(X)-1\geq 2$. We set $C:={\rm alb}_S(S)$. By \cite{Be} Lemma 
V.16, we have $$g(C)=q(S)\geq 2\quad \mbox{ and }\quad {\rm Pic}^0(S)={\rm alb}_S^*J(C).$$
By applying Lemma \ref{p2} twice, first to ${\rm alb}_S$ and then to $f$, we have
$$f^*{\rm alb}_S^*J(C)=f^*{\rm alb}_S^*V^0(\omega_C)\subset 
f^*V^1(\omega_S)\subset V^2(\omega_X)\subset {\rm Pic}^0(X).$$ 
This implies $\ddim V^2(\omega_X)\geq q(S)=q(X)-1$.
Since $V^2(\omega_X)\neq {\rm Pic}^0(X)$, we have that in fact $$\ddim V^2(\omega_X)=
q(S)=q(X)-1.$$ Moreover, by \eqref{forinc}, $V^1(\omega_X)\supset V^2(\omega_X)$ and thus
$\ddim V^1(\omega_X)=q(X)-1$ since $V^0(\omega_X)$ is a proper subvariety and $\chi (\omega_X)=0$. 
\end{proof}

\begin{prop}\label{pr2}
Let $X$ be a smooth projective threefold with $\kappa (X)=2$, $\ddim {\rm alb}_X(X)=1$, $\chi (\omega_X)=0$ and 
$V^0(\omega_X)\subsetneq
{\rm Pic}^0(X)$. If $q(X)=1$, then $\ddim V^1(\omega_X)\leq 0$ and $\ddim V^2(\omega_X)=0$.
If $q(X)>1$, then $V^1(\omega_X)=V^2(\omega_X)={\rm Pic}^0(X)$.
\end{prop}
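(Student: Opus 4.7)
The plan is to reduce to the analysis of a fibration over a smooth projective curve via the Stein factorization of $\mathrm{alb}_X$. Since $\ddim {\rm alb}_X(X)=1$, one factors $\mathrm{alb}_X$ as $X\xrightarrow{f} B\xrightarrow{\alpha}{\rm Alb}(X)$, where $f$ is a surjective morphism with connected fibers onto a smooth projective curve $B$, and $\alpha$ is finite onto ${\rm alb}_X(X)$. Connected fibers give $f^*:{\rm Pic}^0(B)\hookrightarrow {\rm Pic}^0(X)$ injective (via projection formula applied to sections of $f^*L$ and $f^*L^{-1}$), while the factorization ${\rm alb}_X^*=f^*\circ \alpha^*$ of the canonical isomorphism ${\rm Pic}^0({\rm Alb}(X))\xrightarrow{\sim}{\rm Pic}^0(X)$ forces $f^*$ to be surjective. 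Hence $f^*$ is an isomorphism and $g(B)=q(X)$.

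The case $q(X)>1$ is then quick. Since $g(B)\ge 2$, Riemann--Roch on $B$ gives $V^0(\omega_B)={\rm Pic}^0(B)$, so Lemma \ref{p2} applied to $f$ (with relative fiber dimension two) yields $V^2(\omega_X)\supset f^*{\rm Pic}^0(B)={\rm Pic}^0(X)$, hence $V^2(\omega_X)={\rm Pic}^0(X)$. The identity $\chi(\omega_X\otimes L)=\chi(\omega_X)=0$ together with the vanishing $h^3(X,\omega_X\otimes L)=h^0(X,L^{-1})=0$ for any $L\in {\rm Pic}^0(X)\setminus\{\sO_X\}$ gives $h^1(X,\omega_X\otimes L)=h^0(X,\omega_X\otimes L)+h^2(X,\omega_X\otimes L)>0$, so $V^1(\omega_X)$ contains the dense open ${\rm Pic}^0(X)\setminus\{\sO_X\}$; closedness then gives $V^1(\omega_X)={\rm Pic}^0(X)$.

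The case $q(X)=1$ is the delicate one because now $B$ is an elliptic curve and $f^*V^0(\omega_B)=\{\sO_X\}$ provides no useful lower bound. Here ${\rm Pic}^0(X)$ is one-dimensional, so every $V^k(\omega_X)$ is either finite or equal to ${\rm Pic}^0(X)$; by hypothesis $V^0(\omega_X)\subsetneq {\rm Pic}^0(X)$ is finite. The key step is showing $V^2(\omega_X)\neq {\rm Pic}^0(X)$. For this I would use Koll\'ar's degeneration of the Leray spectral sequence for $f$ (applicable for twists by $L\in{\rm Pic}^0(B)$): since $B$ is a curve, $H^2(B,-)=0$, and since $R^2 f_*\omega_X\cong \omega_B=\sO_B$, one obtains
\begin{equation*}
H^2(X,\omega_X\otimes f^*M)\cong H^0(B,M)\oplus H^1(B,R^1 f_*\omega_X\otimes M)
\end{equation*}
for $M\in {\rm Pic}^0(B)$. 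For $M\neq \sO_B$ the first summand vanishes, so $f^*$ identifies $V^2(\omega_X)\setminus\{\sO_X\}$ with $V^1(R^1 f_*\omega_X)\setminus\{\sO_B\}$. The decisive input is then Hacon's generic vanishing theorem for the morphism $f:X\to B$ to the abelian variety $B$, giving that $R^1 f_*\omega_X$ is a $GV$-sheaf on $B$; as $B$ is one-dimensional, $V^1(R^1 f_*\omega_X)\subsetneq {\rm Pic}^0(B)$ is finite. This yields $\ddim V^2(\omega_X)=0$. The bound $\ddim V^1(\omega_X)\le 0$ now follows from the Euler characteristic argument of the previous paragraph: for $L\in {\rm Pic}^0(X)\setminus\{\sO_X\}$, $L\in V^1(\omega_X)$ iff $L\in V^0(\omega_X)\cup V^2(\omega_X)$, which is a finite union. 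The main obstacle is the estimate on $V^2(\omega_X)$ in this last case, which depends crucially on generic vanishing for higher direct images of the dualizing sheaf rather than on the geometry that sufficed in Proposition \ref{pr1}.
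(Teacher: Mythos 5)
Your proof is correct, but it takes a genuinely different route from the paper's. The paper channels everything through the Iitaka fibration $f\colon X\to S$ onto a surface (this is the only place the hypothesis $\kappa(X)=2$ is used), invokes Lemma \ref{hacon} to compare $q(X)$, $q(S)$ and $\ddim {\rm alb}_S(S)$, and splits into cases accordingly: when $q(S)=q(X)>1$ it applies Lemma \ref{p2} twice (first to ${\rm alb}_S$, then to $f$) to get $V^2(\omega_X)={\rm Pic}^0(X)$, while in the branches with $q(X)=1$ it switches to the Albanese fibration onto an elliptic curve and argues exactly as you do, via Koll\'ar's degeneration of the Leray spectral sequence, Koll\'ar's isomorphism for the top higher direct image, and Hacon's generic vanishing theorem for $R^1$ of the pushforward of $\omega_X$. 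You bypass the Iitaka fibration altogether: the Stein factorization of ${\rm alb}_X$ gives a connected fibration $f\colon X\to B$ onto a smooth curve with $f^*\colon {\rm Pic}^0(B)\to {\rm Pic}^0(X)$ an isomorphism, hence $g(B)=q(X)$, and you then split directly on $q(X)$, needing only one application of Lemma \ref{p2} plus the $\chi(\omega_X)=0$ bookkeeping when $q(X)>1$, and Koll\'ar plus Hacon when $q(X)=1$. The essential inputs are thus the same, but your argument never uses $\kappa(X)=2$ (nor Lemma \ref{hacon}), so it is more economical and in effect treats uniformly the $\ddim {\rm alb}_X(X)=1$ situations that the paper handles separately in Propositions \ref{pr3}, \ref{pr4} and \ref{pr5}; what the paper's route buys is uniformity of exposition with the neighboring propositions, which are organized by Kodaira dimension through the Iitaka fibration. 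Two points you leave implicit are harmless but worth a line: $\sO_X\in V^2(\omega_X)$ because $h^2(X,\omega_X)=q(X)>0$, which is what upgrades your bound to $\ddim V^2(\omega_X)=0$ when $q(X)=1$; and the genus-one curve $B$ should be regarded as an abelian variety (fix an origin; in fact $B\cong {\rm Alb}(X)$ here) so that Hacon's theorem applies to $f$ verbatim.
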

\begin{proof}
%
As in the previous proof we denote by $f:X\longrightarrow S$ a non-singular representative of the Iitaka fibration of $X$. 
By Lemma \ref{hacon}, we have $q(X)-q(S)=1-\ddim {\rm alb}_S(S)$. We distinguish two cases: $\ddim {\rm alb}_S(S)=0$ 
and $\ddim {\rm alb}_S(S)=1$. 

If $\ddim {\rm alb}_S(S)=0$, then $q(S)=0$, $q(X)=1$ and ${\rm alb}_X$ is surjective
since $\ddim {\rm alb}_X(X)=1$. Moreover, ${\rm alb}_X$ has connected fibers by \cite{Ue} Lemma 2.11.
 Set $E:={\rm Alb}(X)$ and $a:={\rm alb}_X$.
  By \cite{Ko1} Proposition 7.6, $R^2a_*\omega_X\cong \sO_{E}$.
The degeneration of the Leray spectral sequence associated to $a$ yields decompositions
$$H^2(X,\omega_X\otimes a^*L)\cong H^1(E,R^1a_*\omega_X\otimes L)\oplus H^0(E,L) \mbox{ for any }
L\in {\rm Pic}^0(E)\cong{\rm Pic}^0(X).$$
By \cite{Ha} Corollary 4.2, $R^1a_*\omega_X$ is a $GV$-sheaf on $E$. Hence $\ddim V^2(\omega_X)=0$ and
$V^1(\omega_X)$ is either empty or zero-dimensional as $V^0(\omega_X)\subsetneq {\rm Pic}^0(X)$ and $\chi (\omega_X)=0$.

We now suppose $\ddim {\rm alb}_S(S)=1$. Then $q(X)=q(S)$ by Lemma \ref{hacon}. We distinguish two subcases: $q(S)=1$ and $q(S)>1$.
If $q(S)=q(X)=1$ then the image of ${\rm alb}_X$ is an elliptic curve and the same argument as in the previous case 
applies. Suppose now $q(S)=q(X)>1$. Then ${\rm alb}_S$ has connected fibers and its image is a smooth curve $B$ of genus $g(B)=q(S)>1$. Therefore 
$V^0(\omega_B)={\rm Pic}^0(B)$ and by Lemma \ref{p2} 
$${\rm alb}_S^*{\rm Pic}^0(B)={\rm alb}_S^*V^0(\omega_B)\subset V^1(\omega_S)\subset {\rm Pic}^0(S),$$ which forces 
$V^1(\omega_S)={\rm Pic}^0(S)$. Another application of Lemma \ref{p2} gives
$$f^*V^1(\omega_S)\subset V^2(\omega_X)\subset {\rm Pic}^0(X).$$
Therefore $V^2(\omega_X)={\rm Pic}^0(X)$ and consequently $V^1(\omega_X)={\rm Pic}^0(X)$.
\end{proof}

\begin{prop}\label{pr3}
 Let $X$ be a smooth projective threefold with $\kappa (X)=1$, $\chi (\omega_X)=0$ and $V^0(\omega_X)\subsetneq {\rm Pic}^0(X)$.
If $\ddim {\rm alb}_X(X)=2$, then $q(X)\geq 3$ and $\ddim V^1(\omega_X)=\ddim V^2(\omega_X)=q(X)-1$.
If $\ddim {\rm alb}_X(X)=1$, then $q(X)\geq 2$ and $V^1(\omega_X)=V^2(\omega_X)={\rm Pic}^0(X)$.
\end{prop}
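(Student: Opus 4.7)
The plan is to follow the strategy of Propositions \ref{pr1} and \ref{pr2}. Let $f: X \longrightarrow C$ be a non-singular representative of the Iitaka fibration of $X$; since $\kappa(X) = 1$, $C$ is a smooth projective curve. The general fiber $F$ is a smooth projective surface with $\kappa(F) = 0$, and so by \cite{Ka1} Theorem 1 its Albanese map is surjective. Lemma \ref{hacon} applied to $f$ therefore yields
\begin{equation*}
q(X) - q(C) = \ddim {\rm alb}_X(X) - \ddim {\rm alb}_C(C),
\end{equation*}
with $\ddim {\rm alb}_C(C) = 0$ iff $g(C) = 0$ and $\ddim {\rm alb}_C(C) = 1$ otherwise.

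Consider the case $\ddim {\rm alb}_X(X) = 2$. The displayed identity forces $q(X) = 2$ whenever $g(C) \leq 1$, so ${\rm alb}_X$ surjects onto an abelian surface $A$. By the proof of Lemma \ref{hacon}, the image under ${\rm alb}_X$ of a general fiber $F$ of $f$ is a translate of a positive-dimensional abelian subvariety of $A$; combined with $\kappa(F) = 0$, the Enriques-Kodaira classification forces $F$ to be bimeromorphic to an abelian or a bielliptic surface, so $f$ is a family of such surfaces over $C$. I would rule out both $g(C) = 0$ and $g(C) = 1$ by combining rigidity of such families over a rational or elliptic base with Koll\'ar's decomposition $R^2 f_* \omega_X \cong \omega_C$ and the hypotheses $\chi(\omega_X) = 0$, $V^0(\omega_X) \subsetneq {\rm Pic}^0(X)$ to contradict $\kappa(X) = 1$. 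Granted $g(C) \geq 2$, the formula gives $q(X) = g(C) + 1 \geq 3$. By Riemann-Roch on $C$ one has $V^0(\omega_C) = {\rm Pic}^0(C)$, and Lemma \ref{p2} applied to $f$ (general fiber of dimension two) produces $f^* {\rm Pic}^0(C) \subset V^2(\omega_X)$, a subvariety of dimension $g(C) = q(X) - 1$. The Green-Lazarsfeld generic vanishing inequality for $\omega_X$ (\cite{GL1}), together with $\ddim {\rm alb}_X(X) = \ddim X - 1$, gives $\codim V^2(\omega_X) \geq 1$, hence $\ddim V^2(\omega_X) = q(X) - 1$ exactly. From \eqref{forinc} we get $V^1(\omega_X) \supset V^2(\omega_X)$, so $\ddim V^1(\omega_X) \geq q(X) - 1$. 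If $V^1(\omega_X)$ were all of ${\rm Pic}^0(X)$, then for generic $L \notin V^0(\omega_X)$ one would have $h^0(\omega_X \otimes L) = 0$ and, by Serre duality, $h^3(\omega_X \otimes L) = h^0(L^{-1}) = 0$; the condition $\chi(\omega_X) = 0$ would force $h^2(\omega_X \otimes L) = h^1(\omega_X \otimes L) > 0$, so $V^2(\omega_X) = {\rm Pic}^0(X)$, contradicting $\ddim V^2(\omega_X) = q(X) - 1$. Hence $\ddim V^1(\omega_X) = q(X) - 1$.

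For the case $\ddim {\rm alb}_X(X) = 1$, the same formula gives $q(X) = g(C)$ if $g(C) \geq 1$ and $q(X) = 1$ if $g(C) = 0$. I would rule out $g(C) \leq 1$ by a parallel combined Iitaka/Albanese analysis, following the template of the previous case. When $g(C) \geq 2$, Lemma \ref{p2} provides $f^* {\rm Pic}^0(C) \subset V^2(\omega_X)$, now of dimension $g(C) = q(X)$, so $V^2(\omega_X) = {\rm Pic}^0(X)$. The analogous Euler-characteristic argument (if $V^1(\omega_X) \subsetneq {\rm Pic}^0(X)$, then for generic $L$ one has $h^1(\omega_X \otimes L) = 0 = h^0(\omega_X \otimes L) = h^3(\omega_X \otimes L)$, and $\chi(\omega_X) = 0$ forces $h^2(\omega_X \otimes L) = 0$, contradicting $V^2(\omega_X) = {\rm Pic}^0(X)$) then yields $V^1(\omega_X) = {\rm Pic}^0(X)$.

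The principal obstacle is the exclusion of $g(C) \in \{0,1\}$: in those situations the Iitaka fibration has, up to birational modification, an abelian or bielliptic generic fiber over a rational or elliptic base, and one must combine rigidity of such families with the constraints $\chi(\omega_X) = 0$ and $V^0(\omega_X) \subsetneq {\rm Pic}^0(X)$ to produce a contradiction with $\kappa(X) = 1$. This is the most delicate part of the argument.
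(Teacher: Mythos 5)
Your outline for the case $g(C)\geq 2$ is sound and is exactly what the paper's one-line proof (``completely analogous to Propositions \ref{pr1} and \ref{pr2}'') intends: Lemma \ref{hacon} applied to the Iitaka fibration, $V^0(\omega_C)={\rm Pic}^0(C)$, Lemma \ref{p2}, the generic vanishing bound $\codim V^2(\omega_X)\geq 1$, the inclusion \eqref{forinc}, and the Euler-characteristic argument using $\chi(\omega_X)=0$ and $V^0(\omega_X)\subsetneq{\rm Pic}^0(X)$. The genuine gap is precisely the step you defer, the exclusion of $g(C)\in\{0,1\}$, and it is not a technicality that can be outsourced to ``rigidity''. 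In Proposition \ref{pr1} the corresponding exclusion (Claim \ref{c3}) works because the Iitaka fibre is a \emph{curve}, which cannot dominate the two-dimensional $\ker f_*$; when $\kappa(X)=1$ the Iitaka fibre is a surface, that dimension count is vacuous, and the analogy breaks exactly here.

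Worse, the contradiction you hope for does not exist. Let $A$ be an abelian surface, $B$ a curve of genus $\geq 2$ with an involution $\sigma$ such that $B/\sigma\cong\rp^1$, and let $\rz/2\rz$ act freely on $A\times B$ by (translation by a nonzero $2$-torsion point, $\sigma$); set $X=(A\times B)/(\rz/2\rz)$. Then $X$ is a smooth projective threefold with $\kappa(X)=1$, Iitaka fibration $X\to B/\sigma\cong\rp^1$ with general fibre $A$, $\chi(\omega_X)=0$, $V^0(\omega_X)$ finite (hence $\subsetneq{\rm Pic}^0(X)$), $q(X)=2$ and $\ddim{\rm alb}_X(X)=2$: so $g(C)=0$ is compatible with all the standing hypotheses, no rigidity or Koll\'ar-type argument can rule it out, and in this example the asserted conclusions fail as well ($q(X)=2$, and all $V^i(\omega_X)$ are finite since the pullback of any nontrivial $L\in{\rm Pic}^0(X)$ to $A\times B$ is nontrivial along the $A$-factor). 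Taking $A=E_1\times E_2$ with the involution acting by $-1$ on $E_2$ produces the same phenomenon with $\ddim{\rm alb}_X(X)=1$ and $q(X)=1$. So your plan for the ``delicate part'' would fail as stated; closing the argument requires either additional hypotheses ruling out such isotrivial Iitaka fibrations over bases of genus $\leq 1$ or a separate treatment of those cases --- a point that the paper's ``completely analogous'' proof glosses over too.
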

\begin{proof}
This proof is completely analogous to the proofs of Propositions \ref{pr1} and \ref{pr2}.

%
%
\end{proof}

\begin{prop}\label{pr4}
 Let $X$ be a smooth projective threefold with $\kappa(X)=0$ and $\chi (\omega_X)=0$.
If $\ddim {\rm alb}_X(X)=2$, then $\ddim V^1(\omega_X)=\ddim V^2(\omega_X)=0$.
If $\ddim {\rm alb}_X(X)=1$, then $\ddim V^1(\omega_X)\leq 0$ and $\ddim V^2(\omega_X)=0$.
\end{prop}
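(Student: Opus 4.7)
Since $\kappa(X)=0$, Kawamata's theorem (\cite{Ka1} Theorem 1) implies that the Albanese map $a:={\rm alb}_X:X\to A:={\rm Alb}(X)$ is surjective with connected fibers, so $\ddim{\rm alb}_X(X)=q(X)$. The two cases of the statement therefore correspond to $q(X)=2$ (with $A$ an abelian surface and general fiber an elliptic curve) and $q(X)=1$ (with $A$ an elliptic curve and general fiber a smooth surface of Kodaira dimension $0$). The strategy is to combine Koll\'ar's degeneration of the Leray spectral sequence (\cite{Ko2} Theorem 3.1) with his identification $R^d a_*\omega_X\cong \omega_A\cong \sO_A$ (\cite{Ko1} Proposition 7.6, $d$ the generic fiber dimension) and Hacon's theorem \cite{Ha} Corollary 4.2 (each $R^i a_*\omega_X$ is a $GV$-sheaf on $A$).

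In Case 1 the generic fiber is a curve, so $R^i a_*\omega_X=0$ for $i\geq 2$ and $R^1 a_*\omega_X\cong \sO_A$; for every $L\in{\rm Pic}^0(X)\cong \widehat{A}$ this yields
$$H^2(X,\omega_X\otimes L)\cong H^2(A,a_*\omega_X\otimes L)\oplus H^1(A,L),\qquad H^1(X,\omega_X\otimes L)\cong H^1(A,a_*\omega_X\otimes L)\oplus H^0(A,L).$$
Since $H^i(A,L)=0$ for $L\in{\rm Pic}^0(A)\setminus\{\sO\}$ and $V^2(a_*\omega_X)$ is at most $0$-dimensional by the $GV$ property on the abelian surface $A$, this gives $\ddim V^2(\omega_X)=0$. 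In Case 2, $R^2 a_*\omega_X\cong \sO_E$, so the analogous decomposition, together with the $GV$ property of $R^1 a_*\omega_X$ on the elliptic curve $E$ (forcing $V^1(R^1 a_*\omega_X)$ to be finite) and the trivial vanishing $H^2(E,-)=0$, again yields $\ddim V^2(\omega_X)=0$.

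For $V^1(\omega_X)$ the $GV$ estimate alone is too weak: it gives only $\ddim V^1(a_*\omega_X)\leq 1$ in Case 1, and in Case 2 the summand $H^0(E,R^1 a_*\omega_X\otimes L)$ is not directly controlled. To close this gap I would invoke the standing assumptions of \S~\ref{seccsl}, $\chi(\omega_X)=0$ and $V^0(\omega_X)\subsetneq{\rm Pic}^0(X)$: for $L\neq \sO$ one has $h^3(X,\omega_X\otimes L)=h^0(X,L^{-1})=0$, so $\chi(\omega_X\otimes L)=0$ forces $h^1=h^0+h^2$, whence the set-theoretic inclusion $V^1(\omega_X)\setminus\{\sO\}\subseteq V^0(\omega_X)\cup V^2(\omega_X)$. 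Combined with $\ddim V^2(\omega_X)=0$, the claim $\ddim V^1(\omega_X)\leq 0$ reduces to showing $\ddim V^0(\omega_X)\leq 0$.

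The main obstacle is therefore ruling out positive-dimensional components of $V^0(\omega_X)$ under $\kappa(X)=0$. By the Green-Lazarsfeld theorem any such component is a torsion translate of a positive-dimensional subtorus of ${\rm Pic}^0(X)$ coming from a fibration $X\to Z$ onto a lower-dimensional irregular variety. Mirroring the case analyses of Propositions \ref{pr1} and \ref{pr2} (whose use of the Iitaka fibration is unavailable here since $\kappa(X)=0$), one would argue via Kawamata's subadditivity for fibrations over abelian varieties and Lemma \ref{hacon} that such a fibration, in the presence of the surjectivity of ${\rm alb}_X$ onto $A$ and of $\kappa(X)=0$, is incompatible with $V^0(\omega_X)\subsetneq{\rm Pic}^0(X)$; treating the two subcases $q(X)=2$ and $q(X)=1$ separately then gives $\ddim V^0(\omega_X)\leq 0$ and completes the proof.
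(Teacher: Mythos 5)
Your treatment of $V^2(\omega_X)$ and the reduction for $V^1(\omega_X)$ follows the same route as the paper: Kawamata's theorem for surjectivity of ${\rm alb}_X$, Koll\'ar's degeneration of the Leray spectral sequence together with $R^{d}a_*\omega_X\cong\sO_{{\rm Alb}(X)}$, Hacon's $GV$-property for the higher direct images, and the Euler characteristic bookkeeping $\chi(\omega_X\otimes L)=0$, $h^3(X,\omega_X\otimes L)=0$ for $L\neq\sO_X$, giving $V^1(\omega_X)\setminus\{\sO_X\}\subset V^0(\omega_X)\cup V^2(\omega_X)$. Up to the (easy, but unaddressed) nonemptiness of $V^1$ and $V^2$ needed to turn ``$\leq 0$'' into ``$=0$'' in the case $\ddim{\rm alb}_X(X)=2$, this part is fine.

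However, there is a genuine gap exactly where you flag ``the main obstacle'': you never actually prove that $V^0(\omega_X)$ has no positive-dimensional components. The paper disposes of this in its very first line by citing Chen--Hacon (\cite{CH1} Lemma 3.1): for $\kappa(X)=0$ the locus $V^0(\omega_X)$ consists of at most one point. Your substitute is only a plan (``one would argue via Kawamata's subadditivity \dots is incompatible with $V^0(\omega_X)\subsetneq{\rm Pic}^0(X)$''), with no argument showing that a Green--Lazarsfeld positive-dimensional component through a torsion point contradicts $\kappa(X)=0$; this is precisely the nontrivial content of the Chen--Hacon lemma and cannot be waved through. Moreover, the hypothesis $V^0(\omega_X)\subsetneq{\rm Pic}^0(X)$ you import from the ``standing assumptions'' of \S 6 is not part of the statement of this proposition (unlike Propositions \ref{pr1}--\ref{pr3}), and even granting it, it does not by itself yield $\ddim V^0(\omega_X)\leq 0$. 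As written, the proof of $\ddim V^1(\omega_X)\leq 0$ (hence of the first assertion as well) is incomplete; citing \cite{CH1} Lemma 3.1, or reproducing its proof, is what is needed to close it.
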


\begin{proof}
We recall that, by \cite{CH1} Lemma 3.1, $V^0(\omega_X)$ consists of at most one point. 
We begin with the case $\ddim {\rm alb}_X(X)=2$.
By \cite{Ka1} Theorem 1, ${\rm alb}_X$ is surjective and has connected fibers. Therefore $q(X)=h^2(X,\omega_X)=2$ and 
$\{\sO_X\}\in V^1(\omega_X)$ since $\chi (\omega_X)=0$. 
 Set $a:={\rm alb}_X$.  
By \cite{Ha} Corollary 4.2
$$\codim V^1(a_*\omega_X)\geq 1 \quad \mbox{ and }\quad \codim V^2(a_*\omega_X)\geq 2.$$ Using that
 $R^1 a_*\omega_X\cong \sO_{{\rm Alb}(X)}$ (\cite{Ko1} Proposition 7.6) and by studying the Leray spectral sequence associated to $a$
we see that $$\codim V^1(\omega_X)\geq 1\quad \mbox{ and } \quad \codim V^2(\omega_X)\geq 2.$$
 At this point, the hypothesis $\chi (\omega_X)=0$ implies $\ddim V^1(\omega_X)=0$.

If $\ddim {\rm alb}_X(X)=1$, then as in the previous case $\ddim V^2(\omega_X)=0$ and consequently 
$V^1(\omega_X)$ is either empty or zero-dimensional since $\chi(\omega_X)=0$.  
\end{proof}

\begin{prop}\label{pr5}
 Let $X$ be a smooth projective threefold with $\kappa(X)=-\infty$ and $\chi (\omega_X)=0$.
\begin{enumerate}
 \item Suppose $\ddim {\rm alb}_X(X)=2$. If $q(X)=2$, then $V^1(\omega_X)=V^2(\omega_X)=\{\sO_X\}$. If $q(X)>2$, then 
$\ddim V^1(\omega_X)=\ddim V^2(\omega_X)=q(X)-1$.\\
\item Suppose $\ddim {\rm alb}_X(X)=1$. If $q(X)=1$, then $\ddim V^1(\omega_X)\leq 0$ and $\ddim V^2(\omega_X)=0$.
If $q(X)>1$, then $V^1(\omega_X)=V^2(\omega_X)={\rm Pic}^0(X)$.
\end{enumerate}
\end{prop}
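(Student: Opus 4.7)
The proof treats the four combinations of $\ddim {\rm alb}_X(X)\in\{1,2\}$ and ``$q(X)$ small'' versus ``$q(X)$ large'' separately, in the spirit of Propositions \ref{pr1}--\ref{pr4}. Three standing facts enter throughout. First, since $\kappa(X)=-\infty$, Lemma \ref{v0neg} yields $V^0(\omega_X)=\emptyset$, while Serre duality gives $V^3(\omega_X)=\{\sO_X\}$; together with $\chi(\omega_X)=0$ this forces $h^1(\omega_X\otimes L)=h^2(\omega_X\otimes L)$ for every $L\neq \sO_X$, so that $V^1(\omega_X)$ and $V^2(\omega_X)$ differ at most at the origin. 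Second, Koll\'ar's identification $R^{\ddim f}f_*\omega_X\cong \omega_W$ and the Leray degeneration used in Lemma \ref{p2} convert every fibration $X\longrightarrow W$ into an identification of $V^k(\omega_X)$ with loci computed on $W$. Third, Viehweg's subadditivity $C_{n,m}$ in the ranges relevant to threefolds ensures that general fibres of the fibrations considered below inherit $\kappa=-\infty$, hence are rational curves or uniruled surfaces.

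For case (i) I would take a smooth model $g\colon X\longrightarrow Z'$ of the Stein factorization of ${\rm alb}_X$; the general fibre of $g$ is then a $\rp^1$, so $g_*\omega_X=0$, $R^1g_*\omega_X\cong\omega_{Z'}$, and ${\rm Alb}(X)\cong {\rm Alb}(Z')$. Consequently $V^{k+1}(\omega_X)\cong g^*V^k(\omega_{Z'})$ for $k=0,1$ under the isomorphism $g^*\colon{\rm Pic}^0(Z')\longrightarrow{\rm Pic}^0(X)$, reducing everything to studying these two loci on a surface $Z'$ of maximal Albanese dimension. For $q(X)=2$ such a $Z'$ is birational to an abelian surface and both loci collapse to $\{\sO\}$, yielding $V^1(\omega_X)=V^2(\omega_X)=\{\sO_X\}$. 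For $q(X)>2$, the Leray identity $\chi(\omega_X)=-\chi(\omega_{Z'})$ together with Castelnuovo's inequality $\chi(\omega_{Z'})>0$ for surfaces of general type of maximal Albanese dimension rules out $\kappa(Z')=2$, maximal Albanese dimension combined with $q(Z')>2$ rules out $\kappa(Z')\leq 0$, and the remaining possibility $\kappa(Z')=1$ realizes $Z'$ as an isotrivial elliptic fibration $h\colon Z'\longrightarrow C$ with $g(C)=q(Z')-1=q(X)-1$ by Lemma \ref{hacon}. Beauville's description recalled in Example \ref{rmkgamma0} then produces $V^0(\omega_{Z'})=V^1(\omega_{Z'})\cong h^*{\rm Pic}^0(C)\times \Gamma^0(h)$ of dimension $q(X)-1$, giving the desired estimate.

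For case (ii) let $a\colon X\longrightarrow C$ be a smooth model of the Stein factorization of ${\rm alb}_X$; the universal property of ${\rm Alb}$ makes $a$ a fibration with $g(C)=q(X)$ and $a^*\colon{\rm Pic}^0(C)\longrightarrow{\rm Pic}^0(X)$ an isomorphism. When $q(X)>1$ the base $C$ has genus $\geq 2$, so $V^0(\omega_C)={\rm Pic}^0(C)$; Lemma \ref{p2} (with fibre dimension $2$) pulls this back to $V^2(\omega_X)={\rm Pic}^0(X)$, and the Euler-characteristic balance from the first paragraph then forces $V^1(\omega_X)={\rm Pic}^0(X)$ as well. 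When $q(X)=1$ the base $C$ is elliptic; by Viehweg's $C_{3,1}$ the general fibre surface $F$ has $\kappa(F)=-\infty$, hence $p_g(F)=0$ and $a_*\omega_X=0$. The Leray sequence now collapses to isomorphisms $H^1(X,\omega_X\otimes a^*L)\cong H^0(C,R^1a_*\omega_X\otimes L)$ and $H^2(X,\omega_X\otimes a^*L)\cong H^1(C,R^1a_*\omega_X\otimes L)\oplus H^0(C,L)$, where $R^1a_*\omega_X$ is a $GV$-sheaf on $C$ of Euler characteristic $-\chi(\omega_X)=0$. On an elliptic curve, the $GV$ condition forces $V^1$ of $R^1a_*\omega_X$ to be a proper subset, and $\chi=0$ then forces $V^0$ to be proper as well, so both are finite; this yields $\ddim V^1(\omega_X)\leq 0$ and $\ddim V^2(\omega_X)=0$ (the latter non-empty since $\sO_X\in V^2(\omega_X)$).

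The chief technical obstacle lies in subcase (i.b): after the Kodaira-dimension exclusions one must extract from Beauville's decomposition ${\rm Pic}^0(Z',h)\cong h^*{\rm Pic}^0(C)\times \Gamma^0(h)$ both the precise dimension $q(X)-1$ and the crucial coincidence $V^0(\omega_{Z'})=V^1(\omega_{Z'})$, without which $V^1(\omega_X)$ and $V^2(\omega_X)$ could not be controlled simultaneously. A secondary issue, the appeal to $C_{3,1}$ in subcase (ii.a), can be bypassed by invoking Miyaoka--Mori uniruledness of $X$ to conclude directly that the general fibre surface of $a$ is uniruled with $p_g=0$.
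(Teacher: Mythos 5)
Your overall reduction coincides with the paper's: Stein-factorize the Albanese map, pass to a non-singular representative, use Koll\'ar's results on $R^q f_*\omega$ together with the degeneration of the Leray spectral sequence to transfer the loci $V^k(\omega_X)$ to the base, and then handle the base via Castelnuovo's inequality, Beauville's description of isotrivial elliptic surfaces (Example \ref{rmkgamma0}), and Hacon's $GV$-statement for $R^1a_*\omega_X$ in case (ii). Your case (ii) is correct; bounding $V^0(R^1a_*\omega_X)$ through $\chi(R^1a_*\omega_X)=-\chi(\omega_X)=0$ plus the $GV$ condition is a harmless variant of the paper's use of $\chi(\omega_X)=0$ and $V^0(\omega_X)=\emptyset$, and the uniruledness input ($a_*\omega_X=0$, $\rp^1$-fibres) is justified exactly as you indicate.

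The one genuine gap is the first branch of case (i): the assertion that a surface $Z'$ of maximal Albanese dimension with $q(Z')=2$ is birational to an abelian surface is false as stated --- a double cover of an abelian surface branched along a smooth ample divisor is of general type, has $q=2$ and maximal Albanese dimension. To make the $q(X)=2$ branch work you must run the same exclusions you only invoke for $q(X)>2$: $\kappa(Z')=2$ is ruled out by $\chi(\omega_{Z'})=-\chi(\omega_X)=0$ together with Castelnuovo, and $\kappa(Z')=1$ is ruled out because a properly elliptic surface of maximal Albanese dimension is an isotrivial elliptic surface over a curve of genus at least $2$, forcing $q(Z')\geq 3$ (equivalently, your own computation $g(C)=q(Z')-1$ via Lemma \ref{hacon} would give $g(C)=1$, contradicting that genus bound). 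Only after these exclusions does $\kappa(Z')=0$ plus maximal Albanese dimension yield the abelian surface. The paper sidesteps this by organizing the case analysis by $\kappa(S'')\in\{0,1\}$ (after excluding $\kappa(S'')=2$) and then reading off $q$ from each case, rather than fixing $q$ first; with that reorganization, or with the two exclusions added to your $q=2$ branch, your argument goes through.
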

\begin{proof}
By Lemma \ref{v0neg}, $V^0(\omega_X)=\emptyset$.
We start with the case $\ddim {\rm alb}_X(X)=2$.
Let $a:X\longrightarrow S\subset {\rm Alb}(X)$ be the Albanese map of $X$ and
$b:X\longrightarrow S'$ be the Stein factorization of $a$.
The morphism $b$ has connected fibers and $S'$ is a normal surface. 
Let $c:X'\longrightarrow S''$ be a non-singular representative of $b$. 
The morphism $c$ has connected fibers and its general fiber is isomorphic to $\rp^1$.
Hence, by Lemma \ref{hacon} $q(X')=q(S'')$.
Thanks to the degeneration of the Leray spectral sequence associated to $c$, we can compute the $V^k(\omega_{X'})$'s (and therefore the 
$V^k(\omega_X)$'s) by studying the $V^k(\omega_{S''})$'s.
We divide this study according to the values of $\kappa (S'')$. 
First of all, we note that $S''$ is of maximal Albanese dimension and hence that $\kappa (S'')\geq 0$.
Moreover, with an analogous argument as in Claim \ref{c3}, one can show that $\kappa (S'')<2$.
If $\kappa (S'')=0$, then $S''$ is birational to an abelian surface. This forces
$$q(X)=q(X')=q(S'')=2\quad \mbox{ and }\quad {\rm Pic}^0(X')\cong {\rm Pic}^0(S'').$$
We have $c_*\omega_{X'}=0$ as the general fiber of $c$ is $\rp^1$.  
By the degeneration of the Leray spectral sequence associated to $c$
we get $V^1(\omega_{X'})=V^2(\omega_{X'})=\{\sO_{X'}\}$.

If $\kappa(S'')=1$, then $S''$ is birational to an elliptic surface of maximal Albanese dimension fibered onto a curve of genus $\geq 2$.
Thus $\codim_ {{\rm Pic}^0(S'')} V^1(\omega_{S''})=1$ (see Example \ref{rmkgamma0}). 
Moreover $q(X')=q(S'')\geq 3$ and $c^*{\rm Pic}^0(S'')={\rm Pic}^0(X')$.
Another application of the Leray spectral sequence associated to $c$ yields 
$$\codim V^2(\omega_{X'})=1$$
and consequently 
$$V^1(\omega_{X'})=1$$ since $\chi (\omega_{X'})=0$ and 
$V^0(\omega_{X'})=\emptyset$.

We now suppose $\ddim {\rm alb}_X(X)=1$. Let $a:X\longrightarrow C\subset {\rm Alb}(X)$ be the Albanese map of $X$ where $C:={\rm Im}\, a$.
Then $a$ has connected fibers and $q(X)=g(C)$ by \cite{Ue} Lemma 2.11. We note that a general fiber 
of $a$ is a surface of negative Kodaira dimension and hence $a_*\omega_X=0$. 
The degeneration of the Leray spectral sequence associated to $a$
yields isomorphisms
\begin{gather*}
H^1(X,\omega_X\otimes a^*L)\cong H^0(C,R^1a_*\omega_X\otimes L)\\
H^2(X,\omega_X\otimes a^*L)\cong H^1(C,R^1a_*\omega_X\otimes L)
\oplus H^0(C,\omega_C\otimes L)
\end{gather*}
for every $L\in {\rm Pic}^0(C)$.
We distinguish two cases: $g(C)=q(X)=1$ and $g(C)=q(X)>1$.
If $g(C)=q(X)=1$, then $C={\rm Alb}(X)$. Moreover, by \cite{Ha} Corollary 4.2, $R^1a_*\omega_X$ is a $GV$-sheaf on ${\rm Alb}(X)$ and hence 
the thesis.
On the other hand, if $g(C)=q(X)>1$, then
$V^0(\omega_C)={\rm Pic}^0(C)$ and we conclude by invoking one more time Lemma \ref{p2}. 
\end{proof}
\begin{rmk}\label{q=1}
%
In the case $q(X)=1$, the previous propositions yield the following statement: 
for each $k$, $\ddim V^k(\omega_X)=1$ if and only if $\ddim V^k(\omega_Y)=1$.
In general, we have not been able to show that if a locus $V^k(\omega_X)$ is empty
(\emph{resp}. of dimension zero) then the corresponding locus $V^k(\omega_Y)$ is empty (\emph{resp}. of dimension zero).
This ambiguity is mainly caused by the possible presence of non-trivial automorphisms. 

An application of a sheafified version of the derivative complex (\emph{cf}. \cite{EL} Theorem 3 and \cite{LP})
can be shown to yield Conjecture \ref{intrP} for threefolds having $q(X)=2$ (see \cite{Lo}).
\end{rmk}

\section{Applications}
In this final section, we 
prove Corollaries \ref{intreuler}, \ref{intrh02} and \ref{intrfibra}. Moreover, we present a 
further result regarding
the invariance of the Euler characteristics for powers of the canonical bundle for derived equivalent smooth minimal varieties
of maximal Albanese dimension.
\subsection{Holomorphic Euler characteristic and Hodge numbers}
\begin{proof}[Proof of Corollary \ref{intreuler}.]
Let $d:=\ddim X$.
We begin with the case $\ddim {\rm alb}_X(X)=d$. By Theorem \ref{intralb},
$Y$ is of maximal Albanese dimension as well and by \eqref{foralb}
$$\codim V^1(\omega_X)\geq 1\quad \mbox{ and }\quad \codim V^1(\omega_Y)\geq 1.$$
We distinguish two cases: $V^0(\omega_X)\subsetneq {\rm Pic}^0(X)$ and $V^0(\omega_X)= {\rm Pic}^0(X)$.
If $V^0(\omega_X)\subsetneq {\rm Pic}^0(X)$, then $V^0(\omega_Y)\subsetneq {\rm Pic}^0(Y)$ by Proposition \ref{hzero} and hence 
$\chi (\omega_X)=\chi (\omega_Y)=0$.
On the other hand, if $V^0(\omega_X)={\rm Pic}^0(X)$, then by Proposition \ref{hzero} and \eqref{forinc}
$$\exists \, L\in V^0(\omega_X)\backslash \big(\cup_{k=1}^d V^k(\omega_X)\big)
\mbox{ such that } F(1,L)=(1,M)\mbox{ with } M\in V^0(\omega_Y)\backslash \big(\cup_{k=1}^d V^k(\omega_Y)\big).$$ 
Hence
$\chi(\omega_X)=\chi (\omega_X\otimes L)=
h^0(X,\omega_X\otimes L)=h^0(Y,\omega_Y\otimes M)=\chi (\omega_Y\otimes M)=\chi(\omega_Y)$.

We suppose now $\ddim {\rm alb}_X(X)=d-1$ and $\kappa(X)\geq 0$. By Theorem \ref{intralb}, we have $\ddim {\rm alb}_Y(Y)=d-1$ as well.
In the following we will deliberately use \eqref{forinc}.
We distinguish four cases.

The first case is when $V^0(\omega_X)=V^1(\omega_X)={\rm Pic}^0(X)$. 
By Proposition \ref{hzero} and Corollary \ref{V1}, $V^0(\omega_Y)=V^1(\omega_Y)={\rm Pic}^0(Y)$ as well.
We claim that 
$$\exists \,\, \sO_X\neq L\in V^0(\omega_X)\backslash V^2(\omega_X)\,\, \mbox{ such that } \,\, F(1,L)=(1,M)\,\mbox{ with }\, 
\sO_Y \neq M\in V^0(\omega_Y)\backslash V^2(\omega_Y).$$ 
In fact, by Remark \ref{rmksplit}, $F(1,{\rm Pic}^0(X))= (1,{\rm Pic}^0(Y))$ and
then it is enough to choose the preimage, under $F^{-1}$,
of an element $(1,M)$ with $M\notin V^2(\omega_Y)$. 
By using Corollary \ref{formula} twice, first with $k=0$ and hence with $k=1$, we obtain
\begin{gather*}
\chi(\omega_X)  = \chi (\omega_X\otimes L)  =  h^0(X,\omega_X\otimes L)-h^1(X,\omega_X\otimes L)=\\
  h^0(Y,\omega_Y\otimes M)-h^1(Y,\omega_Y\otimes M)
 =  \chi (\omega_Y\otimes M)=\chi (\omega_Y).
\end{gather*}

The second case is when $V^0(\omega_X)={\rm Pic}^0(X)$ and $V^1(\omega_X)\subsetneq {\rm Pic}^0(X)$. 
By Proposition \ref{hzero} and Corollary \ref{V1} we have $V^0(\omega_Y)={\rm Pic}^0(Y)$ and $V^1(\omega_Y)\subsetneq {\rm Pic}^0(Y)$.
As before, $F(1,{\rm Pic}^0(X))=(1,{\rm Pic}^0(Y))$, and hence we can pick an element 
$$\sO_X\neq L\in V^0(\omega_X)\backslash V^1(\omega_X)\; \mbox{ such that }\; F(1,L)=(1,M) \; \mbox{ with }\;
\sO_Y\neq M\in V^0(\omega_Y)\backslash 
V^1(\omega_Y).$$ Therefore
$\chi (\omega_X)=\chi(\omega_X\otimes L)=h^0(X,\omega_X\otimes M)=h^0(Y,\omega_Y\otimes M)=\chi (\omega_Y\otimes M)=\chi (\omega_Y)$.

The third case is when $V^0(\omega_X)\subsetneq {\rm Pic}^0(X)$ and $V^1(\omega_X)={\rm Pic}^0(X)$. 
Then $V^0(\omega_Y)\subsetneq {\rm Pic}^0(Y)$
and $V^1(\omega_Y)={\rm Pic}^0(Y)$ as well by Proposition \ref{hzero} and Corollary \ref{V1}.
Note that $F(1,{\rm Pic}^0(X))=(1,{\rm Pic}^0(Y))$ by Remark \ref{rmksplit}. 
Similarly to the previous cases, there exists a pair $(L,M)\neq(\sO_X,\sO_Y)$ such that $$F(1,L)=(1,M)\;\mbox{ with }\;
L\notin V^0(\omega_X)\cup V^2(\omega_X)\;\mbox{ and }\;
M\notin V^0(\omega_Y)\cup V^2(\omega_Y).$$ By Corollary \ref{formula}, we have
$\chi(\omega_X)=\chi (\omega_X\otimes L)=-h^1(X,\omega_X\otimes L)=-h^1(Y,\omega_Y\otimes M)=\chi (\omega_Y\otimes M)=\chi (\omega_Y)$.

The last case is when both $V^0(\omega_X)$ and $V^1(\omega_X)$ are proper subvarieties of ${\rm Pic}^0(X)$. 
Then $V^0(\omega_Y)$ and $V^1(\omega_Y)$ are proper subvarieties as well and $\chi(\omega_X)=\chi(\omega_Y)=0$.
\end{proof}
\begin{proof}[Proof of Corollary \ref{intrh02}]
 By the invariance of the Hochschild homology $HH_k(X)\cong HH_k(Y)$ for $k=0,1$, 
we have $h^0(X,\omega_X)=h^0(Y,\omega_Y)$ and $h^1(X,\omega_X)=h^1(Y,\omega_Y)$.
Then Corollary \ref{intreuler} implies $h^{0,2}(X)=h^{0,2}(Y)$.
The second equality follows at once by the isomorphism $HH_2(X)\cong HH_2(Y)$.
\end{proof}
%

Using a result in \cite{PP2}, we can also derive a consequence about pluricanonical bundles.
\begin{cor}\label{corpowers}
Let $X$ and $Y$ be smooth projective derived equivalent varieties.
Suppose that $X$ is minimal and of maximal Albanese dimension.
Then, $$\chi (\omega_X^{\otimes m})=\chi (\omega_Y^{\otimes m})\quad \mbox{ for all }\quad m\geq 2.$$
\end{cor}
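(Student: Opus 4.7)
The strategy is to express $\chi\bigl(\omega_X^{\otimes m}\bigr)$ as a generic value of $h^0\bigl(\omega_X^{\otimes m}\otimes L\bigr)$ on ${\rm Pic}^0(X)$ via a Pareschi--Popa generic-vanishing statement, then transport this equality across the derived equivalence using Proposition \ref{hzero}, and finally identify the transported value with $\chi(\omega_Y^{\otimes m})$ by a symmetric argument on $Y$.

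First, since $X$ is minimal we have $\kappa(X)\ge 0$, so by Theorem \ref{intralb} the Fourier--Mukai partner $Y$ is also of maximal Albanese dimension. The result from \cite{PP2} to be used is the generic-vanishing statement for pluricanonical sheaves: if $X$ is smooth projective, minimal and of maximal Albanese dimension, and $m\ge 2$, then $\omega_X^{\otimes m}$ is a GV-sheaf with $V^0\bigl(\omega_X^{\otimes m}\bigr)={\rm Pic}^0(X)$. Consequently there is a nonempty Zariski-open $U_X\subset {\rm Pic}^0(X)$ on which $h^i\bigl(X,\omega_X^{\otimes m}\otimes L\bigr)=0$ for all $i>0$, hence
\begin{equation*}
\chi\bigl(\omega_X^{\otimes m}\bigr)=h^0\bigl(X,\omega_X^{\otimes m}\otimes L\bigr)\qquad\text{for every }L\in U_X.
\end{equation*}

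Next, the $k=0$ case of Corollary \ref{formula} (after replacing $m$ by $m-1$) gives $h^0(X,\omega_X^{\otimes m}\otimes L)=h^0(Y,\omega_Y^{\otimes m}\otimes M)$ whenever the Rouquier isomorphism satisfies $F(1,L)=(1,M)$; equivalently, Proposition \ref{hzero} yields an isomorphism $V^0_r(\omega_X^{\otimes m})\cong V^0_r(\omega_Y^{\otimes m})$ for every $r\ge 1$, which encodes exactly this information. Letting $L$ vary inside $U_X$, the corresponding set of $M$'s is a dense open subset of ${\rm Pic}^0(Y)$ on which $h^0(Y,\omega_Y^{\otimes m}\otimes M)=\chi(\omega_X^{\otimes m})$.

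The hardest step is to match this common generic value with $\chi(\omega_Y^{\otimes m})$, because $Y$ is not assumed minimal and so the Pareschi--Popa vanishing cannot be invoked on $Y$ verbatim. This is resolved by passing to a minimal model $Y'$ of $Y$, whose existence is guaranteed by $\kappa(Y)=\kappa(X)\ge 0$; $Y'$ is minimal and of maximal Albanese dimension, so \cite{PP2} applied to $Y'$ gives $\chi(\omega_{Y'}^{\otimes m})$ equal to the generic value of $h^0(\omega_{Y'}^{\otimes m}\otimes \cdot)$ on ${\rm Pic}^0(Y')$. Since both $\chi(\omega^{\otimes m})$ (for $m\ge 1$) and $h^0(\omega^{\otimes m}\otimes \cdot)$ restricted to ${\rm Pic}^0$ are birational invariants of smooth projective varieties, this identity descends from $Y'$ to $Y$, yielding $\chi(\omega_Y^{\otimes m})=h^0(Y,\omega_Y^{\otimes m}\otimes M)$ for $M$ generic. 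Combining with the previous step gives $\chi(\omega_X^{\otimes m})=\chi(\omega_Y^{\otimes m})$.
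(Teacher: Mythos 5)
The decisive step of your argument is false: $\chi(\omega^{\otimes m})$ is \emph{not} a birational invariant for $m\geq 2$. Already for a smooth surface, blowing up a point changes $\chi(mK)$ by $-\tfrac{m(m-1)}{2}$ (Riemann--Roch), and the same phenomenon occurs in higher dimension; only the twisted plurigenera $h^0(\omega^{\otimes m}\otimes L)$, $L\in{\rm Pic}^0$, descend birationally, not the higher cohomology. Consequently the identity ``$\chi(\omega_{Y'}^{\otimes m})=$ generic value of $h^0(\omega_{Y'}^{\otimes m}\otimes\,\cdot\,)$'' on a minimal model $Y'$ of $Y$ does not transfer to $Y$, and your argument only proves $\chi(\omega_X^{\otimes m})=\chi(\omega_{Y'}^{\otimes m})$, which is not the statement. (A secondary problem with the same step: in dimension $\geq 3$ a minimal model of $Y$ is in general singular, so \cite{PP2} Corollary 5.5 does not apply to it as stated, and its existence is an MMP input the paper never needs.) The paper's proof avoids the detour: $Y$ is again of maximal Albanese dimension by Theorem \ref{intralb} and is itself minimal, since nefness of the canonical bundle is preserved under derived equivalence (\cite{Ka2}); hence \cite{PP2} Corollary 5.5 applies to $Y$ directly, $\omega_Y^{\otimes m}$ is a GV-sheaf on $Y$, and $\chi(\omega_Y^{\otimes m})$ equals the generic value of $h^0(\omega_Y^{\otimes m}\otimes\,\cdot\,)$; one then concludes via Proposition \ref{hzero} exactly as in Corollary \ref{intreuler}. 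Your proof is repaired by replacing the birational descent with this observation that $Y$ itself is minimal.

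Two smaller inaccuracies, neither fatal: \cite{PP2} Corollary 5.5 gives only the GV property, not $V^0(\omega_X^{\otimes m})={\rm Pic}^0(X)$ (false already for an abelian variety, where $\omega_X^{\otimes m}=\sO_X$); you only use the GV property, so this is harmless. And when the generic value of $h^0(\omega_X^{\otimes m}\otimes\,\cdot\,)$ is $0$, a generic $L$ does not lie in $V^0(\omega_X^{\otimes m})$, so Proposition \ref{hzero} gives no control on the automorphism component of $F(1,L)$ and you cannot speak of ``the corresponding $M$''; the transport of the generic value should instead be extracted from the isomorphisms $V^0_r(\omega_X^{\otimes m})\cong V^0_r(\omega_Y^{\otimes m})$ for all $r\geq 1$ (or by the case division used in the proof of Corollary \ref{intreuler}), which do show that the generic values of $h^0$ on the two sides agree.
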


\begin{proof}
By \cite{PP2} Corollary 5.5, $\omega_X^{\otimes m}$ and $\omega_Y^{\otimes m}$ are $GV$-sheaves 
on $X$ and $Y$ respectively for any $m\geq 2$.\footnote{The minimality condition is necessary; see \cite{PP2} Example 5.6.} 
Since $V^0(\omega_X^{\otimes m})\cong V^0(\omega_Y^{\otimes m})$, we argue as in the proof of Corollary \ref{intreuler}.
\end{proof}

\subsection{Fibrations}
In this subsection we study the behavior of particular types of fibrations under derived equivalence. We also prove Corollary \ref{intrfibra}. 
We begin by recalling some terminology from \cite{Cat} and \cite{LP}.
A smooth projective variety $X$ is of \emph{Albanese general type} 
if it is of maximal Albanese dimension and has non-surjective Albanese map.
An \emph{irregular fibration} (\emph{resp. higher irrational pencil}) is 
a surjective morphism with connected fibers $f:X\longrightarrow Z$ onto a normal variety $Z$ 
with $0<\ddim Z<\ddim X$ and such that any smooth model of $Z$
is of maximal Albanese dimension (\emph{resp}. Albanese general type).

In \cite{Po} Corollary 3.4, Popa observes that a consequence of Conjecture \ref{intrPV} is that if $X$ 
admits a fibration onto a variety having non-surjective Albanese map, then any Fourier-Mukai partner of $X$ admits
an irregular fibration. With Theorem \ref{intrV} at hand, 
we can verify this statement under an additional hypothesis on $X$.
\begin{prop}\label{fibration}
Let $X$ and $Y$ be smooth projective derived equivalent varieties with $\ddim {\rm alb_X}(X)\geq \ddim X-1$.
If $X$ admits a surjective morphism $f:X\longrightarrow Z$ with connected fibers onto a normal variety $Z$ 
having non-surjective Albanese map and such that $\ddim X>\ddim Z$, then $Y$ admits an irregular fibration. 
\end{prop}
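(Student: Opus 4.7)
The plan is to convert the given data into a positive-dimensional irreducible component of $V^1(\omega_X)_0$, transfer it to $V^1(\omega_Y)_0$ via Corollary \ref{V1}, and then extract an irregular fibration of $Y$ through the Green--Lazarsfeld structure theorem.

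First, I would construct a higher irrational pencil on $X$. Passing to a smooth model $\tilde Z\to Z$, let $W':={\rm alb}_{\tilde Z}(\tilde Z)\subset{\rm Alb}(\tilde Z)$; by hypothesis $W'$ is a proper subvariety. Let $A_{W'}\subset{\rm Alb}(\tilde Z)$ be the maximal abelian subvariety preserving $W'$ under translation. Then $\dim A_{W'}<\dim W'$, since otherwise $W'=A_{W'}$ would be an abelian subvariety, forcing $W'={\rm Alb}(\tilde Z)$ by the generating property of the Albanese. By Ueno's theorem on subvarieties of abelian varieties, the quotient $V:=W'/A_{W'}\subset{\rm Alb}(\tilde Z)/A_{W'}$ is a positive-dimensional variety of general type. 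After replacing $X$ by a suitable smooth birational model, Stein-factoring the composition $X\to Z\to W'\to V$ yields a morphism $g:X\to V'$ with connected fibers onto a normal variety whose smooth model $\tilde V'$ is of general type and of maximal Albanese dimension, with $0<\dim V'\leq\dim Z<\dim X$.

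Second, I would promote this to a positive-dimensional irreducible component of $V^1(\omega_X)_0$. By the theorem of Ein--Lazarsfeld and Chen--Hacon, a variety of general type and of maximal Albanese dimension has $\chi(\omega)>0$; combining this with the Green--Lazarsfeld chain \eqref{forinc} on $\tilde V'$ forces $V^0(\omega_{\tilde V'})={\rm Pic}^0(\tilde V')$ (a generic $L$ avoids every $V^k(\omega_{\tilde V'})$ with $k\geq 1$, so $h^0(\omega_{\tilde V'}\otimes L)=\chi(\omega_{\tilde V'})>0$, and $V^0$ is closed in the irreducible ${\rm Pic}^0(\tilde V')$). Lemma \ref{p2} then gives
$$g^*{\rm Pic}^0(\tilde V')\subseteq V^h(\omega_X),\qquad h:=\dim X-\dim V'\geq 1,$$
a positive-dimensional sublocus of $V^h(\omega_X)$ through the origin. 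The hypothesis $\dim{\rm alb}_X(X)\geq\dim X-1$ combined with the chain \eqref{forinc} on $X$ gives $V^h(\omega_X)\subseteq V^1(\omega_X)$, producing a positive-dimensional component of $V^1(\omega_X)_0$.

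Finally, Corollary \ref{V1} identifies $V^1(\omega_X)_0\cong V^1(\omega_Y)_0$ under the Rouquier isomorphism, so $V^1(\omega_Y)_0$ contains a positive-dimensional component $T$. By the Green--Lazarsfeld structure theorem (with Simpson's torsion-translate refinement), $T$ is contained in $\tau+\phi^*{\rm Pic}^0(\tilde V'')$ for a surjective morphism with connected fibers $\phi:Y\to V''$ onto a normal projective variety whose smooth model has maximal Albanese dimension; positivity of $\dim T$ forces $\dim V''>0$, while the structure theorem enforces $\dim V''\leq\dim Y-1<\dim Y$. This $\phi$ is the desired irregular fibration of $Y$. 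The main obstacle is the first step: extracting from the hypothesis a higher irrational pencil whose target is simultaneously of general type and of maximal Albanese dimension, which requires Ueno's theorem and careful bookkeeping under Stein factorization so that Chen--Hacon's $\chi(\omega)>0$ can be invoked.
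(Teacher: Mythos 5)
Your overall strategy is the same as the paper's: manufacture a positive-dimensional irreducible subset of $V^1(\omega_X)_0$ through the origin, transfer it to $V^1(\omega_Y)_0$ via the derived invariance (Corollary \ref{V1}/Theorem \ref{intrV}(iii)), and apply the Green--Lazarsfeld structure theorem \cite{GL2} to obtain an irregular fibration of $Y$. The second and third steps of your argument are fine (the inclusion $V^h(\omega_X)\subset V^1(\omega_X)$ from $\ddim{\rm alb}_X(X)\geq \ddim X-1$ and \eqref{forinc} is exactly the paper's use of the hypothesis). The problem is the first step.

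The key claim you use --- ``a variety of general type and of maximal Albanese dimension has $\chi(\omega)>0$'' --- is false in dimension $\geq 3$: the Ein--Lazarsfeld threefold (constructed in the very paper \cite{EL} you would be citing) is of general type, of maximal Albanese dimension, and has $\chi(\omega)=0$. Generic vanishing only gives $\chi(\omega)\geq 0$ here, so you cannot conclude $V^0(\omega_{\tilde V'})={\rm Pic}^0(\tilde V')$, and your pullback argument collapses at that point. The detour through Ueno's theorem is also unnecessary: the paper simply takes the Stein factorization $Z\stackrel{f'}{\to} Z'\to{\rm alb}_Z(Z)$ of ${\rm alb}_Z$, notes that (a smooth model of) $Z'$ is of maximal Albanese dimension with non-surjective Albanese map, and invokes \cite{EL} Proposition 2.2, which produces a \emph{positive-dimensional component of $V^0(\omega_{Z'})$ through the origin} --- this weaker conclusion is all that is needed, since one then pulls back that component by $f\circ f'$ via Lemma \ref{p2}. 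Your construction can be repaired the same way: your $\tilde V'$ is in fact of Albanese general type (were its Albanese map surjective, $V=W'/A_{W'}$ would be a translate of an abelian subvariety, contradicting general type), so \cite{EL} Proposition 2.2 applies to it and gives a positive-dimensional component of $V^0(\omega_{\tilde V'})$ through the origin; but as written, the step resting on $\chi(\omega_{\tilde V'})>0$ is a genuine gap.
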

\begin{proof}
 Let $Z\stackrel{f'}{\longrightarrow} Z'\longrightarrow {\rm alb}_Z(Z)$ be the Stein factorization of ${\rm alb}_Z$.
By taking a non-singular representative of $f'$ we can assume $Z'$ smooth.
We can easily check that $Z'$ is of maximal Albanese dimension and with non-surjective Albanese map.
 Hence $h^0(Z',\omega_{Z'})>0$ and thus, by \cite{EL} Proposition 2.2, there
 exists a positive-dimensional irreducible component $V$ of $V^0(\omega_{Z'})$ passing through the origin.
By Lemma \ref{p2},
$(f\circ f')^*V\subset V^k(\omega_X)_0$ where $k=\ddim X-\ddim Z'$. 
By \eqref{foralb}, $(f\circ f')^*V\subset V^k(\omega_{X})_0\subset V^1(\omega_X)_0$ and,
by Theorem \ref{intrV} (iii), there exists a positive-dimensional irreducible component $V'\subset V^1(\omega_Y)_0$. We conclude by applying 
\cite{GL2} Theorem 0.1.
\end{proof}

We point out that, thanks to Theorem \ref{intr3F}, we can remove the hypothesis 
``$\ddim {\rm alb}_X(X)\geq \ddim X-1$'' from the above proposition in the case of threefolds.
The following proposition, together with the subsequent remark, provides the proof of Corollary \ref{intrfibra}.

\begin{prop}\label{corfib}
 Let $X$ and $Y$ be smooth projective derived equivalent threefolds. Fix $k$ to be either 1 or 2.
Then $X$ admits a higher irrational pencil $f:X\longrightarrow Z$ with $0<\ddim Z\leq k$ if and only if 
$Y$ admits a higher irrational pencil $g:Y\longrightarrow W$ with $0<\ddim W\leq k$.
\end{prop}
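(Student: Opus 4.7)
The plan is to prove the forward implication (the reverse being entirely symmetric) by converting a higher irrational pencil on $X$ into a positive-dimensional irreducible component of some cohomological support locus $V^j(\omega_X)_0$ with $j\in\{1,2\}$, transporting it to $V^j(\omega_Y)_0$ via Theorem~\ref{intr3F}(i), and finally applying the Green--Lazarsfeld theorem to recover the desired fibration on $Y$.

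Suppose $X$ admits a higher irrational pencil $f\colon X\to Z$ with $0<\ddim Z\leq k$. After composing with the Stein factorization of ${\rm alb}_Z$ and replacing the target by a non-singular representative, I may assume $Z$ is a smooth projective variety of Albanese general type and that $f$ still has connected fibers. In particular $h^0(Z,\omega_Z)>0$, so by \cite{EL} Proposition~2.2 the locus $V^0(\omega_Z)_0$ contains a positive-dimensional irreducible component $V$. Applying Lemma~\ref{p2} to $f$ yields a positive-dimensional component $f^*V\subseteq V^{3-\ddim Z}(\omega_X)_0$. Setting $j:=3-\ddim Z$, I have $j=2$ when $k=1$ (which forces $\ddim Z=1$), and $j\in\{1,2\}$ when $k=2$.

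By Theorem~\ref{intr3F}(i), the Rouquier isomorphism induces $V^j(\omega_X)_0\cong V^j(\omega_Y)_0$, and in particular transports $f^*V$ to a positive-dimensional irreducible component $V'\subseteq V^j(\omega_Y)_0$. I then invoke \cite{GL2} Theorem~0.1 on $V'$: it produces a surjective morphism with connected fibers $g\colon Y\to W$ onto a normal variety $W$ whose smooth model is of Albanese general type, and such that $V'\subseteq g^*{\rm Pic}^0(W)$ with $0<\ddim W\leq 3-j=\ddim Z\leq k$. Hence $g$ is a higher irrational pencil of the required type.

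The main obstacle is the final step: verifying that \cite{GL2} Theorem~0.1 delivers simultaneously the dimension bound $\ddim W\leq k$ and the Albanese general type property of the smooth model of $W$. This is delicate in the mixed case $k=2$, $j=1$, where in principle Green--Lazarsfeld could produce a fibration onto a variety whose smooth model only has maximal Albanese dimension; one must rule this out by matching the codimension of $V'$ against the dimension of the Picard variety of the fibration target, using that $V'\subseteq V^1(\omega_Y)_0$ passes through the origin and is cut out by the pullback from $W$.
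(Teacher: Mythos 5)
Your skeleton is the same as the paper's (pull back a piece of $V^0$ of the base to a locus $V^j(\omega_X)_0$ with $j=3-\ddim Z$, transport it via Theorem \ref{intr3F}(i), then apply \cite{GL2} Theorem 0.1 on $Y$), but there is a genuine gap exactly where you flag it, and your suggested fix does not close it. \cite{GL2} Theorem 0.1 only produces an \emph{irregular} fibration $g:Y\to W$ with $V'\subset g^*{\rm Pic}^0(W)+\gamma$ and $\ddim W\leq 3-j$; it does not give that a smooth model of $W$ is of Albanese \emph{general type}. Since via \cite{EL} Proposition 2.2 you only know that $V'$ is positive-dimensional, you cannot exclude, say, $q(W)=\ddim W$ with surjective Albanese map for the smooth model of $W$ (e.g.\ $W$ birational to an abelian surface in the case $k=2$, $j=1$, or even an elliptic curve if $\dim V'=1$). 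The vague suggestion of ``matching the codimension of $V'$ against $\dim{\rm Pic}^0(W)$'' and using that $V'$ passes through the origin is not an argument: passing through the origin gives $V'\subseteq g^*{\rm Pic}^0(W)$ but yields no lower bound on $q(W)$ beyond $\dim V'\geq 1$, which is compatible with $q(W)\leq \ddim W$.

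The paper closes this by making the transported component \emph{large}, not merely positive-dimensional. For $\ddim Z=1$ one has $V^0(\omega_Z)={\rm Pic}^0(Z)$ outright (genus $\geq 2$); for $\ddim Z=2$ one first replaces $Z$, if necessary, by a lower-dimensional target so that $\chi(\omega_{Z'})>0$ for a smooth model (the reduction of \cite{PP1} p.~271), which again forces $V^0(\omega_Z)={\rm Pic}^0(Z)$. Then $f^*{\rm Pic}^0(Z)\subset V^j(\omega_X)_0$ has dimension at least $q(Z)\geq \ddim Z+1$ (i.e.\ $\geq 2$ for a curve of genus $\geq 2$, $\geq 3$ for an Albanese general type surface), so the component $T\subset V^j(\omega_Y)_0$ obtained from Theorem \ref{intr3F}(i) satisfies $q(W)\geq \ddim T\geq \ddim Z+1>\ddim W$, since $\ddim W\leq 3-j=\ddim Z$. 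Combined with the maximal Albanese dimension of the smooth model of $W$ coming from \cite{GL2}, the inequality $q(W)>\ddim W$ forces the Albanese map to be non-surjective, i.e.\ $g$ is a higher irrational pencil with $\ddim W\leq k$. Without this quantitative step (which your use of \cite{EL} Proposition 2.2 discards), the conclusion you reach is only the existence of an irregular fibration on $Y$, i.e.\ the weaker statement of Proposition \ref{fibration}, not Proposition \ref{corfib}.
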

\begin{proof}
Suppose first $k=1$. Let $f:X\longrightarrow Z$ be a higher irrational pencil
onto a smooth curve $Z$ of genus $g(Z)\geq 2$.
By Lemma \ref{p2}, $f^*V^0(\omega_Z)=f^*{\rm Pic}^0(Z)\subset V^2(\omega_X)_0$. By Proposition \ref{intr3F} (i), there exists
a component $T\subset V^2(\omega_Y)_0$ such that 
\begin{equation}\label{eqT}
\ddim T\geq q(Z)\geq 2. 
\end{equation}
By \cite{GL2} Theorem 0.1 or by \cite{Be2} Corollaire 2.3, there exists an
irrational fibration 
$g:Y\longrightarrow W$ onto a smooth curve $W$ such that $T\subset g^*{\rm Pic}^0(W)+\gamma$ for some $\gamma\in {\rm Pic}^0(Y)$. Therefore 
\begin{equation}\label{eqW}
q(W)=g(W)\geq \ddim T\geq 2
\end{equation}
and $g$ is in effect a higher irrational pencil. 

Suppose now $k=2$. Let $f:X\longrightarrow Z$ be a higher irrational pencil. It is a general fact that, 
by possibly replacing $Z$ with a lower dimensional variety, 
one can furthermore assume that $\chi(\omega_{Z'})>0$ for any smooth model $Z'$ of $Z$ (see \cite{PP1} p. 271). 
If $\ddim Z=1$, we apply the argument of the previous case. Suppose then $\ddim Z=2$. 
Then $q(Z)\geq 3$ and by Lemma \ref{p2}, 
$$f^*V^0(\omega_{Z})=f^*{\rm Pic}^0(Z)\subset V^1(\omega_X)_0.$$ By Theorem \ref{intr3F}, there exists
a component $T\subset V^1(\omega_Y)_0$ such that $\ddim T\geq q(Z')\geq 3$ and, by \cite{GL2} Theorem 0.1, 
there exists an irregular
fibration $g:Y\longrightarrow W$ such that $T\subset g^*{\rm Pic}^0(W)+\gamma$ for some $\gamma \in {\rm Pic}^0(Y)$. 
Then we conclude that $q(W)\geq \ddim T\geq 3$ and that 
$g$ is a higher irrational pencil.
\end{proof}
 \begin{rmk}\label{rmkfib}
We can slightly improve Proposition \ref{corfib} by keeping track of the irregularities of the fibrations.
By going back to the proof of Proposition \ref{corfib} for the case $k=1$, we see that by \eqref{eqT} and \eqref{eqW} 
we obtain $q(W)\geq q(Z)$.
Then we can formulate the following stronger statement. Fix an integer $g\geq 2$. 
The variety $X$ admits a higher irrational pencil $f:X\longrightarrow C$ onto a curve of genus $g(C)\geq g$ if and only if
$Y$ admits a higher irrational pencil $h:Y\longrightarrow D$ onto a curve of genus $g(D)\geq g$.
\end{rmk} 

\noindent \textbf{Acknowledgements.}
I am deeply grateful to Mihnea Popa and Christian Schnell for their insights, hints and encouragement. 
I also thank Chih-Chi Chou, Lawrence Ein, Victor Gonz\'{a}lez Alonso, Emanuele Macr\`{i}, Wenbo Niu and Tuan Pham for 
helpful conversations.

\addcontentsline{toc}{chapter}{References}

\end{document}